\numberwithin{equation}{section}
\newtheorem{thm}{THEOREM}[section]
\newtheorem{conj}[thm]{CONJECTURE}
\newtheorem{lemma}[thm]{LEMMA}
\newcommand{\G}{\Gamma}
\begin{document}
\title{Cyclic Cohomology and Chern Connes pairing of some crossed product algebras}
\author{Safdar Quddus}

\date{\today}
 
\let\thefootnote\relax\footnote{2010 Mathematics Subject Classification. 58B34; 18G60}
\keywords{cohomology, non-commutative torus, Chern-Connes pairing}

\begin{abstract}
We compute the cyclic and Hochschild cohomology groups for the algebras $\mathcal A_\theta^{alg} \rtimes \mathbb Z_3, \mathcal A_\theta^{alg} \rtimes \mathbb Z_4$ and $\mathcal A_\theta^{alg} \rtimes \mathbb Z_6$. We also compute the partial Chern-Connes index table for each of these algebras.
\end{abstract}
\maketitle
\section{Introduction}
The homological properties of noncommutative algebras have interested several mathematicians in recent years. In the articles \cite{C} and \cite{Y}, the (co)homology groups of smooth algebras having a C*-algebra structure are studied. The classical noncommutative algebras have been studied by Alev and Lambre \cite{AL}, Baudry\cite{B}, Fryer \cite{F}, Berest et al.\cite{BRT} and Quddus (\cite{Q1} and \cite{Q2}). \par

For given $\theta \notin \mathbb Q$, we associate the algebraic noncommutative torus as the algebra $\mathcal A_\theta$ defined to be
$$\mathcal A_\theta^{alg} :=\left\{a=\displaystyle\sum_{(n,m)\in \mathbb Z^2} a_{n,m} U_1^n U_2^m \mid a_{n,m} =0 \text{ for all but finitely many } (n,m) \right\},$$
where $U_1$ and $U_2$ are unitary generators satisfying $U_2 U_1 = \lambda U_1 U_2$, $\lambda =e^{2 \pi i \theta}$. 

The group $SL(2,\mathbb Z)$ has the following action on $\mathcal A_\theta^{alg}$. An element 
$$g= \left[
 \begin{array}{cc}
   g_{1,1} & g_{1,2} \\
   g_{2,1} & g_{2,2}
 \end{array} \right]\in SL(2,\mathbb Z)
$$ acts on the generators $U_1$ and $U_2$ as described below:
$$g \cdot U_1=e^{(\pi i g_{1,1} g_{2,1})\theta}U_1^{g_{1,1}}U_2^{g_{2,1}} \text{ and } g \cdot U_2=e^{(\pi i g_{1,2} g_{2,2})\theta}U_1^{g_{1,2}}U_2^{g_{2,2}}.$$
The algebra $\mathcal A_\theta^{alg}$ and associated algebras has been studied in several articles. While the authors of \cite{B} and \cite{O} computed some of the Hochschild homology groups of its $\mathbb Z_2, \mathbb Z_3, \mathbb Z_4 \text{ and } \mathbb Z_6$ crossed products, these groups were completely known in \cite{Q1}. In the paper \cite{BRT} the authors calculated the Picard group and the Morita equivalence classes of $\mathcal A_\theta^{alg}$.

In this article we compute the Hochschild and cyclic cohomology groups of the crossed product algebras $\mathcal A_\theta^{alg} \rtimes \mathbb Z_3, \mathcal A_\theta^{alg} \rtimes \mathbb Z_4$ and $\mathcal A_\theta^{alg} \rtimes \mathbb Z_6$ and thereafter we present the Chern-Connes index table by pairing the known projections of each of these algebras with the cyclic cocycles calculated in this article.
This article hence completes the Hochschild and cyclic cohomological description of the crossed product algebras obtained by the action of the discrete subgroups of $SL(2, \mathbb Z)$ on the algebraic noncommutative torus algebra. In this article we confine ourselves to the notations used in \cite{Q1} and \cite{Q2}.
\section{Statements}
The following are the statements of the theorems proved in this article.
\begin{thm} \label{thm:hoch}
 The Hochschild cohomology of the crossed product algebras are as follows:
 \begin{center}
$H^0(\mathcal A_{\theta}^{alg} \rtimes \G,(\mathcal A_{\theta}^{alg} \rtimes \G)^{\ast} ) \cong\begin{cases}
\mathbb C^7 & \text{ for } \G = \mathbb Z_3\\
\mathbb C^8  & \text{ for } \G = \mathbb Z_4 \\
\mathbb C^9 & \text{ for } \G = \mathbb Z_6. \end{cases}$\\
$H^1(\mathcal A_{\theta}^{alg} \rtimes \G,(\mathcal A_{\theta}^{alg} \rtimes \G)^{\ast}) \cong 0 \text{ for the finite subgroups } \G = \mathbb Z_3, \mathbb Z_4$ and $\mathbb Z_6$. \\
$H^2(\mathcal A_{\theta}^{alg} \rtimes \G,(\mathcal A_{\theta}^{alg} \rtimes \G)^{\ast})  \cong \mathbb C \text{ for the finite subgroups } \G = \mathbb Z_3, \mathbb Z_4$ and $\mathbb Z_6$. \\
$H^k(\mathcal A_{\theta}^{alg} \rtimes \G,(\mathcal A_{\theta}^{alg} \rtimes \G)^{\ast})  \cong 0 \text{ for all } k>3 \text{ and the finite subgroups } \G = \mathbb Z_3, \mathbb Z_4$ and $\mathbb Z_6$. \\
\end{center}
\end{thm}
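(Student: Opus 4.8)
The plan is to reduce the cohomological computation to a homological one by duality, and then to evaluate the Hochschild homology of the crossed product through its decomposition into sectors indexed by group elements. For any unital $\mathbb C$-algebra $A$ the Hochschild cochain complex with coefficients in the dual bimodule is the linear dual of the Hochschild chain complex: the identifications $\mathrm{Hom}_{\mathbb C}(A^{\otimes n}, A^{\ast}) \cong \mathrm{Hom}_{\mathbb C}(A^{\otimes(n+1)}, \mathbb C)$ intertwine the Hochschild coboundary with the transpose of the Hochschild boundary. Since $\mathrm{Hom}_{\mathbb C}(-,\mathbb C)$ is exact on vector spaces, this gives a natural isomorphism $H^n(A, A^{\ast}) \cong \big(HH_n(A)\big)^{\ast}$, so in particular $\dim_{\mathbb C} H^n(A,A^{\ast}) = \dim_{\mathbb C} HH_n(A)$. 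Applying this with $A = \mathcal A_\theta^{alg}\rtimes \G$ reduces the entire theorem to computing $HH_n(\mathcal A_\theta^{alg}\rtimes \G)$ and dualizing; these homology groups are precisely the ones recorded in \cite{Q1}, so I would either cite them directly or re-derive them as below.

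For the homological side I would use the sector decomposition for crossed products by a finite group (with $|\G|$ invertible in $\mathbb C$). Writing ${}_{\gamma}\mathcal A_\theta^{alg}$ for $\mathcal A_\theta^{alg}$ equipped with the bimodule structure twisted by $\gamma$ on one side, one has
\[
HH_\bullet(\mathcal A_\theta^{alg}\rtimes \G)\;\cong\;\bigoplus_{\gamma\in\G}\Big(HH_\bullet(\mathcal A_\theta^{alg}, {}_{\gamma}\mathcal A_\theta^{alg})\Big)^{\G},
\]
the sum running over all of $\G$ because $\G$ is abelian (each conjugacy class is a singleton and each centraliser is $\G$); over $\mathbb C$ the relevant invariants and coinvariants agree via the averaging idempotent. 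The untwisted sector $\gamma=e$ contributes $\big(HH_\bullet(\mathcal A_\theta^{alg})\big)^{\G}$, and here I would invoke the classical computation $HH_0\cong\mathbb C$, $HH_1\cong \mathbb C^2$, $HH_2\cong\mathbb C$, with $HH_k=0$ for $k\ge 3$. The crucial point is to track the induced $\G$-action: it is trivial on $HH_0$ (the invariant trace), it is the standard two-dimensional representation of $\G\subset SL(2,\mathbb Z)$ on $HH_1\cong\mathbb C^2$, and it is the determinant representation on the top class $HH_2\cong\mathbb C$.

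From this the degrees $1$, $2$ and $>3$ follow quickly. No nontrivial element of $\mathbb Z_3,\mathbb Z_4,\mathbb Z_6$ fixes a nonzero vector of the standard representation (its eigenvalues are primitive roots of unity, none equal to $1$), so $(\mathbb C^2)^{\G}=0$ and hence $H^1=0$; since the twisted sectors are concentrated in degree $0$ (the fixed-point sets being isolated points), they add nothing in degree $1$. Because $\G\subset SL(2,\mathbb Z)$ acts through its determinant, which is $1$, the action on $HH_2\cong\mathbb C$ is trivial, so $(\mathbb C)^{\G}=\mathbb C$ and, again with no twisted contribution, $H^2\cong\mathbb C$. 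For $k\ge 3$ both the untwisted sector (Hochschild dimension $2$) and the twisted sectors (degree $0$) vanish, giving $H^k\cong 0$, which in particular covers the stated range $k>3$.

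The substance of the theorem is the degree-zero count, and here lies the main obstacle. By duality $\dim H^0 = \dim HH_0 = 1 + \sum_{\gamma\ne e}\#\{\,\G\text{-orbits on }\mathrm{Fix}(\gamma)\,\}$, where the $1$ is the invariant trace and each twisted sector contributes the $\G$-invariants of the fixed-point space $\mathbb C^{\mathrm{Fix}(\gamma)}$. For $\gamma$ of finite order the number of fixed points on the torus is $|\det(I-\gamma)|$, giving $3$ for an order-$3$ element, $2$ for order $4$, $4$ for order $2$, and $1$ for order $6$; the group $\G$ then permutes these points and one counts orbits. Carrying this out yields $1+3+3=7$ for $\mathbb Z_3$, $1+2+3+2=8$ for $\mathbb Z_4$ (the order-two sector dropping from $4$ to $3$ because the order-four generator swaps two of its four fixed points), and $1+1+2+2+2+1=9$ for $\mathbb Z_6$. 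The delicate step is justifying, over the \emph{algebraic} (non-completed) torus, that each twisted complex $HH_\bullet(\mathcal A_\theta^{alg}, {}_{\gamma}\mathcal A_\theta^{alg})$ is genuinely concentrated in degree $0$ with dimension $|\mathrm{Fix}(\gamma)|$, and that the residual $\G$-module structure is exactly the permutation action on fixed points, free of hidden character twists, since it is precisely this that converts the raw fixed-point counts into the orbit counts $7,8,9$; this is where the explicit projective resolution and cocycle analysis of \cite{Q1} would be brought to bear.
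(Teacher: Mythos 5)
Your primary route is correct and genuinely different from the paper's. The key observation—that $C^{\bullet}(A,A^{\ast})$ is the termwise linear dual of the Hochschild chain complex, so exactness of $\mathrm{Hom}_{\mathbb C}(-,\mathbb C)$ gives $H^{n}(A,A^{\ast})\cong\bigl(HH_{n}(A)\bigr)^{\ast}$ and equality of dimensions whenever $HH_{n}$ is finite dimensional—reduces the whole theorem to the homology groups of \cite{Q1}, which the paper itself asserts are completely known there. The paper never exploits this: it computes each cohomology group directly, sector by sector, from the Connes-type resolution (kernels and cokernels of the twisted maps ${}_{g}\alpha_{1},{}_{g}\alpha_{2}$ acting on formal series), and settles $\G$-invariance by pushing cocycles to the bar complex and pulling back along explicit homotopies; the agreement with the homology of \cite{Q1} is recorded only as an observed duality in the conclusion, not used as a proof. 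What the paper's longer route buys is the explicit list of invariant cocycles ($\tau$, the $\mathcal E$'s, $\mathcal D$'s, $\mathcal F$'s, $\mathcal G$'s), which is precisely the input needed later for the Chern--Connes pairing of Theorem \ref{thm:table}; your argument yields the dimensions of Theorem \ref{thm:hoch} but no representatives, so it proves this theorem while supporting none of the pairing computations.

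Two caveats on your fallback re-derivation, both concentrated in the step you yourself flag. First, degree-$0$ concentration of the twisted sectors is a statement about homology with Laurent-polynomial coefficients, where it does hold (the twisted Koszul relations propagate any coefficient along an infinite orbit with nonzero scalars, so finite support forces the kernel in top degree to vanish); be aware that on the paper's cohomological side, with formal-series coefficients, the paper instead reports three nonzero, non-invariant classes in the $\omega$-twisted $H^{2}$—by your own duality these intermediate groups should vanish outright, so your picture is the cleaner one, and the discrepancy is harmless only because both give zero after taking invariants. Second, and more seriously, the claim that the residual $\G$-action on a twisted $HH_{0}$ is the bare permutation action on fixed points is not automatic: the paper's computations exhibit monomial actions with genuine scalar twists (for $\mathbb Z_{6}$ one finds $-\omega\cdot\mathcal D_{0,1}=\sqrt\lambda\,\mathcal D_{1,1}$ and $-\omega\cdot\mathcal D_{1,1}=\lambda\,\mathcal D_{1,0}$), and the invariant dimension equals the orbit count only when the product of these scalars around each orbit cycle is $1$. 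Since, for example, the answer $\mathbb C^{9}$ versus $\mathbb C^{8}$ for $\mathbb Z_{6}$ hinges exactly on whether the $3$-cycle inside the order-two sector has trivial monodromy, this is not a point that can be waved through; it is the one place where either \cite{Q1} must genuinely be invoked or the scalars computed by hand.
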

\begin{thm} \label{thm:cyclic} The periodic cyclic cohomology groups are as follows:
\begin{center}
$HP^{even}(\mathcal A_{\theta}^{alg} \rtimes \G) \cong\begin{cases}
\mathbb C^8 & \text{ for } \G = \mathbb Z_3\\
\mathbb C^9  & \text{ for } \G = \mathbb Z_4 \\
\mathbb C^{10} & \text{ for } \G = \mathbb Z_6. \end{cases}$\\
$HP^{odd}(\mathcal A_{\theta}^{alg} \rtimes \G) \cong 0 \text{ for the finite subgroups } \G = \mathbb Z_3, \mathbb Z_4$ and $\mathbb Z_6$. \\
\end{center}
\end{thm}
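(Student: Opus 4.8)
The plan is to deduce the periodic theory from the Hochschild cohomology already computed in Theorem~\ref{thm:hoch}, using Connes' periodicity (SBI) long exact sequence as the only extra machinery. Writing $HH^{n}(A):=H^{n}(A,A^{\ast})$ and $A:=\mathcal A_\theta^{alg}\rtimes\G$ for $\G\in\{\mZ_3,\mZ_4,\mZ_6\}$, recall that for a unital algebra there is an exact sequence
$$\cdots\to HC^{n-1}(A)\xrightarrow{B}HH^{n}(A)\xrightarrow{I}HC^{n}(A)\xrightarrow{S}HC^{n-2}(A)\xrightarrow{B}HH^{n+1}(A)\to\cdots,$$
and that the periodic groups are recovered as the inverse limits $HP^{i}(A)=\varprojlim_{k}\big(HC^{i+2k}(A),S\big)$, $i\in\{0,1\}$. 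Since every group occurring here is finite-dimensional, there is no $\varprojlim^{1}$ obstruction, and it will be enough to show that the tower of $S$-maps is eventually an isomorphism and then to read off the stable value.

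First I would record the input. By Theorem~\ref{thm:hoch} one has $HH^{0}(A)\cong\mC^{d}$ with $d=7,8,9$ for $\G=\mZ_3,\mZ_4,\mZ_6$ respectively, $HH^{1}(A)=0$, $HH^{2}(A)\cong\mC$, and $HH^{k}(A)=0$ for all $k\geq 3$ (the crossed product has Hochschild dimension $2$). Since $HC^{0}=HH^{0}$ we get $HC^{0}(A)\cong\mC^{d}$, and the segment $HC^{0}\xrightarrow{B}HH^{1}\xrightarrow{I}HC^{1}\xrightarrow{S}HC^{-1}=0$ together with $HH^{1}=0$ forces $HC^{1}(A)=0$. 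The next segment
$$0=HC^{1}\xrightarrow{B}HH^{2}\xrightarrow{I}HC^{2}\xrightarrow{S}HC^{0}\xrightarrow{B}HH^{3}=0$$
is then a short exact sequence $0\to HH^{2}\to HC^{2}\to HC^{0}\to 0$, so that $\dim HC^{2}=\dim HH^{2}+\dim HC^{0}=1+d$, i.e. $HC^{2}(A)\cong\mC^{d+1}$, yielding $\mC^{8},\mC^{9},\mC^{10}$ in the three cases.

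It remains to stabilize. For $n=3$ the sequence gives $HC^{2}\xrightarrow{B}HH^{3}=0\to HC^{3}\xrightarrow{S}HC^{1}=0$, whence $HC^{3}(A)=0$; propagating this with the vanishing of $HH^{\mathrm{odd}}$ shows $HC^{2k+1}(A)=0$ for all $k$. For $n=2k$ with $k\geq 2$ the relevant portion reads $HH^{2k}=0\to HC^{2k}\xrightarrow{S}HC^{2k-2}\to HH^{2k-1}=0$, so $S\colon HC^{2k}\to HC^{2k-2}$ is an isomorphism. Hence the even tower is eventually constant, equal to $HC^{2}$, and the odd tower is identically zero, giving
$$HP^{even}(A)\cong HC^{2}(A)\cong\mC^{d+1},\qquad HP^{odd}(A)=0,$$
which is precisely Theorem~\ref{thm:cyclic}.

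Once Theorem~\ref{thm:hoch} is in hand the argument is essentially mechanical, so I do not expect a serious obstacle; the one substantive input is the vanishing of Hochschild cohomology in degrees $\geq 3$, which is exactly what turns $S$ into an isomorphism in the stable range and collapses the periodic theory onto $HC^{2}$. I would guard against two pitfalls: first, fixing the conventions in the SBI sequence (the direction of $S$ and the indexing) so that the short exact sequences above are aligned correctly; and second, confirming that no $H^{3}$ term survives, which I would cross-check against the Euler-characteristic identity $\dim HP^{even}-\dim HP^{odd}=\sum_{k}(-1)^{k}\dim HH^{k}=d+1$, which matches the claimed answers and reassures that the three cases differ only through the value $d=\dim HH^{0}$.
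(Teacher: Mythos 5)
Your argument is correct and is essentially the paper's own: both rest on the SBI long exact sequence together with the vanishing of $H^{1}$ and of $H^{k}$ for $k\geq 3$ from Theorem~\ref{thm:hoch}, which forces $HC^{1}=0$, $\dim HC^{2}=\dim H^{2}+\dim HC^{0}=d+1$, and the stabilization $HC^{\bullet}\cong HC^{\bullet+2}$ for $\bullet>1$, so that $HP^{even}\cong HC^{2}$ and $HP^{odd}=0$. The only cosmetic difference is that the paper runs the SBI sequence summand-by-summand over the paracyclic decomposition $\bigoplus_{g\in\G}H^{\bullet}(\mathcal A_{\theta}^{alg},{}_{g}\mathcal A_{\theta}^{alg\,\ast})^{\G}$ before adding up dimensions (Lemmas~\ref{thm:cyc1233}--\ref{thm:cyc1236}), whereas you apply it once to the total crossed product using only the statement of Theorem~\ref{thm:hoch}; your Euler-characteristic cross-check and the explicit verification that $H^{3}$ vanishes (which the printed statement of Theorem~\ref{thm:hoch} leaves implicit) are sensible safeguards.
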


\begin{thm} \label{thm:table} Let $\zeta = e^{\frac{2 \pi i }{6}}$, the following are the Chern-Connes index tables for the respective crossed product algebras. 
 \begin{center}
 (a) For $\mathcal A_\theta^{alg} \rtimes \mathbb Z_3$ : \space
  \begin{tabular}{c || c | c | c | c | c | c | c | c ||}
    
     &   S$\tau$ & $S\mathcal E_{0,0}^\omega$ & $S\mathcal E_{0,0}^{\omega^2}$ & $S\mathcal E_{0,1}^\omega$ & $S\mathcal E_{0,1}^{\omega^2}$ & $S\mathcal E_{0,-1}^{\omega}$ & $S\mathcal E_{0,-1}^{\omega^2}$ & $S\varphi$ \\ \hline \hline
    $1$ & $1$ & $0$ & $0$ & $0$ & $0$ & $0$ & $0$ & $0$\\ \hline
    $p_0^\theta$ & $\frac{1}{3}$ & $\frac{1}{3}$ & $\frac{1}{3}$ & $0$ & $0$ & $0 $& $0$ & $0$\\ \hline
    $p_1^\theta$ & $\frac{1}{3}$ & $\frac{\zeta^2}{3}$ & $\frac{\zeta^4}{3}$ &$ 0$ & $0$ & $0$ & $0$ & $0$\\ \hline
   $q_0^\theta$ & $\frac{1}{3}$ & $0$ & $0$ & $0$ & $\frac{\zeta^2}{3\sqrt[3]{\lambda^2}}$ & $0$ &$ 0$ & $0$\\ \hline
    $q_1^\theta$ & $\frac{1}{3}$ & $0$ & $0$ & $0$ &$\frac{1}{3\sqrt[3]{\lambda^2}}$ & $0$ & $0$ & $0$\\ \hline
   $r_0^\theta$ & $\frac{1}{3}$ & $0$ & $0$ & $0$ & $0$ & $\frac{\zeta^2 \sqrt[6]{\lambda}}{3}$ & $0$ & $0$\\ \hline
    $r_1^\theta$ & $\frac{1}{3}$ & $0$ & $0$ & $0$ & $0$ & $\frac{\zeta^4 \sqrt[6]{\lambda}}{3}$ & $0$ & $0$\\ \hline
    \hline
  \end{tabular}
\end{center}
\begin{center}
(b) For $\mathcal A_\theta^{alg} \rtimes \mathbb Z_4$ : \space
  \begin{tabular}{c || c | c | c | c | c | c | c | c | c ||} 
    
     &  S$\tau$ & $S\mathcal D_{1,1}$ & $S\mathcal D_{0,0}$ & $S(\mathcal D_{0,1}+\mathcal D_{1,0})$ & $S\mathcal F^i_{0,0}$ &  $S\mathcal F^i_{0,1}$ & $S\mathcal F^{-i}_{0,0}$ & $S\mathcal F^{-i}_{0,1}$ &$\varphi$ \\ \hline \hline
    1 & $1$ & $0$ & $0$ & $0$ & $0$ & $0$ & $0$ & $0$ & $0$\\ \hline
    $p_0^\theta$ & $\frac{1}{4}$ & $0$ & $\frac{1}{4}$ & $0$ & $\frac{1}{4}$ & $0$ & $\frac{1}{4}$ & $0$ & $0$\\ \hline
    $p_1^\theta$ & $\frac{1}{4}$ & $0$ & -$\frac{1}{4}$ & $0$ & $\frac{i}{4}$ & $0$ & $\frac{-i}{4}$ & $0$ & $0$\\ \hline
   $p_2^\theta$ & $\frac{1}{4}$ & $0$ & -$\frac{1}{4}$ & $0$ & -$\frac{1}{4}$ & $0$ & $-\frac{1}{4}$ & $0$ & $0$\\ \hline
    $q_0^\theta$ & $\frac{1}{4}$ & $-\frac{1}{4\sqrt\lambda}$ & $0$ & $0$ & $0$ & $\frac{i  \sqrt[4]{\lambda}}{4}$ & $0$ & $-\frac{i}{4 \sqrt[4]{\lambda}}$ & $0$\\ \hline
     $q_1^\theta$ & $\frac{1}{4}$ & $\frac{1}{4\sqrt\lambda}$ & $0$ & $0$ & $0$ & $\frac{- \sqrt[4]{\lambda}}{4}$ & $0$ & $-\frac{1}{4 \sqrt[4]{\lambda}}$ & $0$\\ \hline
     $q_2^\theta$ & $\frac{1}{4}$ & $-\frac{1}{4\sqrt\lambda}$ & $0$ & $0$ & $0$ & $\frac{-i\sqrt[4]{\lambda}}{4}$ & $0$ & $\frac{i}{4 \sqrt[4]{\lambda}}$ & $0$\\ \hline
       $r^\theta$ & $\frac{1}{2}$ & $0$ & $0$ & -$\frac{1}{2}$ & $0$ & $0$ & $0$ & $0$ & $0$\\ \hline
    \hline
  \end{tabular}
\end{center}
\begin{landscape}  
\begin{center}
(c) For $\mathcal A_\theta^{alg} \rtimes \mathbb Z_6$ : \space

\begin{tabular}{c || c | c | c | c | c | c | c | c | c | c ||}

     &  S$\tau$ &  $S\mathcal D_{0,0}$ &  $S(\mathcal D_{1,0}+\lambda \sqrt{\lambda}\mathcal D_{1,0}+\sqrt\lambda \mathcal D_{1,1})$& $S(\mathcal E^\omega_{0,1}+\mathcal E^\omega_{0,-1})$&  $S\mathcal E^{\omega}_{0,0}$ & $S(\mathcal E^{\omega^2}_{0,1}+\mathcal E^{\omega^2}_{0,-1})$ &  $S\mathcal E^{\omega^2}_{0,0}$& $S\mathcal G^{-\omega}_{0,0}$ & $S\mathcal G^{-\omega^2}_{0,0}$ & $\varphi$  \\ \hline \hline
    1 & $1$ & $0$ & $0$ & $0$ & $0$ & $0$ & $0$ & $0$ & $0$ & $0$\\ \hline
    $p_0^\theta$ & $\frac{1}{6}$ & $\frac{1}{6}$ & $0$ & $0$ & $\frac{1}{6}$ & $0$ & $\frac{1}{6}$ & $\frac{1}{6}$ & $\frac{1}{6}$ & $0$\\ \hline
    $p_1^\theta$ & $\frac{1}{6}$ & $\frac{1}{6}$ & $0$ & $0$ & $\frac{\zeta^2}{3}$ & $0$ & $-\frac{\zeta}{6}$ & $\frac{\zeta}{6}$ & $-\frac{\zeta^2}{6}$ & $0$\\ \hline
   $p_2^\theta$ & $\frac{1}{6}$ & $-\frac{1}{6}$ & $0$ & $0$ & $-\frac{\zeta}{3}$ & $0$ & -$\frac{1}{6}$ & $\frac{\zeta^2}{6}$ & -$\frac{\zeta}{6}$ & $0$\\ \hline
    $p_3^\theta$ & $\frac{1}{6}$ & $\frac{1}{6}$ & $0$ & $0$ & $\frac{1}{6}$ & $0$ & $\frac{1}{6}$ & -$\frac{1}{6}$ & -$\frac{1}{6}$ & $0$\\ \hline
    $p_4^\theta$ & $\frac{1}{6}$ & $0$ & $0$ & $0$ & $\frac{\zeta^2}{6}$ & $0$ & -$\frac{\zeta}{6}$ & -$\frac{\zeta}{6}$ & $\frac{\zeta^2}{6}$ & $0$\\ \hline
    $q_0^\theta$ & $\frac{1}{3}$ & $0$ & $0$ & $\frac{\zeta}{3}$ & $0$ & $\frac{\zeta^2}{3 \sqrt[6]{\lambda}}$ & $0$ & $0$ & $0$ & $0$\\ \hline
    $q_1^\theta$ & $\frac{1}{3}$ & $0$ & $0$ & $-\frac{1}{3}$ & $0$ & $-\frac{\zeta}{3 \sqrt[6]{\lambda}}$  & $0$ & $0$ & $0$ & $0$\\ \hline
    $r^\theta$ & $\frac{1}{2}$ & $0$ & $-\frac{\lambda\sqrt\lambda}{2}$ & $0$ & $0$ & $0$ & $0$ & $0$ & $0$ & $0$\\ \hline
    \hline
  \end{tabular}
\end{center}
\end{landscape}
\end{thm}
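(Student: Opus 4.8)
The plan is to read each table entry as a value of the Chern--Connes pairing
$$\langle\,\cdot\,,\,\cdot\,\rangle\colon HP^{even}(\mathcal A_\theta^{alg}\rtimes\G)\times K_0(\mathcal A_\theta^{alg}\rtimes\G)\longrightarrow\mathbb C,$$
obtained by evaluating the cyclic cocycles that span $HP^{even}$ (the generators produced in Theorem \ref{thm:cyclic}) against the Chern characters of the projections indexing the rows. Since $HP^{odd}=0$ by Theorem \ref{thm:cyclic}, the even pairing with $K_0$ is the only one to consider, and the columns --- namely $S\tau$, the suspended twisted traces $S\mathcal E^{\omega^j}_{a,b},\,S\mathcal D_{a,b},\,S\mathcal F^{\pm i}_{a,b},\,S\mathcal G^{-\omega^j}_{a,b}$, and the degree-two cocycle $\varphi$ (resp.\ $S\varphi$ when $\G=\mathbb Z_3$) --- form exactly a basis of $HP^{even}$, matching the dimensions $8,9,10$. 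The rows are the identity together with the Rieffel-type projections $p_i^\theta,q_i^\theta,r^\theta$ whose explicit normal-form expansions in the crossed product are available from their construction. Filling the tables is then a finite list of pairing computations.

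The first reduction I would make uses the fact that Connes' periodicity operator $S$ acts as the identity on the pairing with $K_0$: for any lower-degree cocycle $\psi$ and projection $e$ one has $\langle S\psi,e\rangle=\langle\psi,e\rangle$. Hence every suspended column collapses to the evaluation of a degree-zero cocycle (a trace, possibly twisted by a character and supported on a single group element) on the projection $e$, while only the last column is a genuine degree-two pairing. Concretely I would use that the degree-zero component of the Chern character of a projection is $\tau(e)$, so that $\langle S\psi,e\rangle=\psi(e)$ for a trace $\psi$, whereas its degree-two component is represented (up to a universal constant) by the cyclic cycle associated to $e-\tfrac{1}{2}$, so that $\langle\varphi,e\rangle=c\,\varphi(e,e,e)$ for the area cocycle.

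With this in hand the trace columns are evaluated directly. For the untwisted trace one recovers the normalised values $\tau(1)=1$, $\tau(p_i^\theta)=\tau(q_i^\theta)=\tfrac{1}{n}$ and $\tau(r^\theta)=\tfrac{1}{2}$, giving the first column. For each twisted trace I would isolate from the chosen projection the single summand supported on the relevant group element $g$, retain its prescribed $\mathcal A_\theta^{alg}$-Fourier mode $(a,b)$, and apply the character weight ($\omega^j$, $\pm i$, or $-\omega^j$) together with the factor $1/|\G|$; the roots of unity $\zeta=e^{2\pi i/6}$ and the radicals $\sqrt[k]{\lambda}$ decorating the tables arise precisely from these weights combined with the commutation relation $U_2U_1=\lambda U_1U_2$ used to put the $g$-twisted part into normal form. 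For the final column I would show that $\varphi$ annihilates every listed projection: computing $\varphi(e,e,e)$ for each $e$ yields zero, reflecting that these $K$-theory generators carry no bulk winding number and are detected entirely by the fixed-point twisted traces rather than by the area class.

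I expect the main obstacle to be the twisted-trace evaluations rather than any conceptual point. They require the fully explicit expansions of $p_i^\theta,q_i^\theta,r^\theta$ as elements of $\mathcal A_\theta^{alg}\rtimes\G$ and a careful accounting of the accumulated powers of $\lambda$ and of the character values, since a single miscounted commutation would introduce a spurious factor of $\sqrt[k]{\lambda}$ or $\zeta$ and corrupt an entry. I would guard against this by cross-checking the computed rows against the total traces already fixed by the first column, against the idempotent and $K$-theoretic relations satisfied by the projections, and by confirming that the resulting columns are consistent with the dimension count of $HP^{even}$ from Theorem \ref{thm:cyclic}.
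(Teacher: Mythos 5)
Your proposal is correct and follows essentially the same route as the paper: the paper's proof is nothing more than the exhaustive list of pairing values obtained by evaluating each suspended (twisted) trace on the group-element component and Fourier mode of each explicit projection from \cite{ELPH}, using $\langle S\psi,e\rangle=\langle\psi,e\rangle$ and the vanishing of the degree-two cocycle on all the listed projections, exactly as you describe. The only difference is that the paper records the resulting numbers outright while you describe the evaluation scheme and cross-checks that produce them.
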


\section{Hochschild Cohomology}
We note that the dual of the algebraic noncommutative torus $\mathcal A_\theta^{alg}$ is 
$$\mathcal A_\theta^{alg \ast} =\left\{a \mid a =\displaystyle\sum_{(n,m)\in \mathbb Z^2} a_{n,m} U_1^n U_2^m\right\}.$$
For $g \in \Gamma$, ${}_{g}\mathcal A_\theta^{alg \ast}$ is the $g$-twisted $\mathcal A_\theta^{alg}$ bimodule, it consists of elements of $\mathcal A_\theta^{alg \ast}$ with the following twisted $\mathcal A_\theta^{alg}$ 
bimodule structure. For $a \in {}_{g}\mathcal A_\theta^{alg \ast}$ and $ \alpha \in \mathcal A_\theta^{alg}$,
$$\alpha \cdot a = (g \cdot \alpha)a\text{ and }a \cdot \alpha = a\alpha.$$ 
We outline the procedure to calculate the Hochschild cohomology groups, using the paracyclic decomposition technique \cite[Proposition~4.6]{GJ} we have the following decomposition  
$$H^\bullet(\mathcal A_\theta^{alg} \rtimes \Gamma, (\mathcal A_\theta^{alg} \rtimes \Gamma)^{\ast}) =  \displaystyle \bigoplus_{g \in \Gamma} H^\bullet(\mathcal A_\theta^{alg}, {}_{g}\mathcal A_\theta^{alg \ast})^{\Gamma}.$$
Using the well-known bimodule resolution 
$$ 0 \rightarrow (\mathcal A_\theta^{alg})^e \rightarrow (\mathcal A_\theta^{alg})^e \oplus (\mathcal A_\theta^{alg})^e \rightarrow (\mathcal A_\theta^{alg})^e$$
for $\mathcal A_\theta^{alg}$ with maps $ 1 \mapsto (U_2 \otimes 1 - \lambda U_2 \otimes 1 , 1 \otimes U_1-1 \otimes U_1)$ and $(1,0) \mapsto (U_1 \otimes 1-1\otimes U_1)$ and $(0,1) \mapsto (U_2 \otimes 1-1\otimes U_2)$. Hence for any bimodule $M$ of $\mathcal A_\theta^{alg}$, $H^\bullet(\mathcal A_\theta^{alg}, M)$ is computed from the following complex
$$ M \rightarrow M \oplus M \rightarrow M \rightarrow 0$$
in which the maps are $m \mapsto (U_1 m -m U_1, U_2 m -m U_2)$ and $(m_1,m_2) \mapsto (U_2 m_1-\lambda m_1 U_2-\lambda U_1 m_2 -m_2 U_1)$. To calculate the group $H^\bullet(\mathcal A_\theta^{alg}, {}_{g}\mathcal A_\theta^{alg \ast})$ we use the modified Connes resolution for the algebra $\mathcal A_\theta^{alg}$. We refer \cite[page 326]{Q1} for detailed discussion
on the modified Connes resolution for the algebra $\mathcal A_\theta^{alg}$. To locate the $\Gamma$ invariant $\bullet$-cocycles in $H^\bullet(\mathcal A_\theta^{alg}, {}_{g}\mathcal A_\theta^{alg \ast})$ we push 
a cocycle into the bar Hochschild cohomology complex and after the $\G$ action on it, we pull it back onto the Connes complex using the chain homotopy maps calculated explicitly in \cite[page 329]{Q1} and \cite[page 134]{C}. A comparison between the two $\bullet$-cocycles on the 
Connes complex will determine invariance. \par

\subsection{The Hochschild cohomology groups $H^0(\mathcal A_\theta^{alg} \rtimes \Gamma, (\mathcal A_\theta^{alg} \rtimes \Gamma)^{\ast}))$}~\\

\centerline{\uline{The case $\Gamma = \mathbb Z_3$.}}

The group $\mathbb Z_3$ is embedded in $SL(2,\mathbb Z)$ through its generator $\omega= \left[
 \begin{array}{cc}
   0 & 1 \\
   -1 & -1
 \end{array} \right]\in SL(2,\mathbb Z)$. The generator acts on $\mathcal A_\theta^{alg}$ in the following way
\begin{center}
$U_1 \mapsto U_2^{-1}, U_2 \mapsto \displaystyle \frac{U_1 U_2^{-1}}{\sqrt\lambda}$.
\end{center}

The case $g=1$ has been considered in the article \cite{Q2}, wherein the author calculated the Hochschild cohomology group $H^0(\mathcal A_\theta^{alg}, \mathcal A_\theta^{alg \ast})$. The said group is a one-dimensional group generated by the cocycle $\tau$ \cite[Section 2]{Q2}.
To check the $\mathbb Z_3$ invariance of $\tau$ we use the method described above. We push the 0-cocycle $\tau$ to the bar complex and pull it back to the Connes complex after the action of the group $\mathbb Z_3$. We notice that in this case the cocycle $\tau$ 
is invariant under the $\mathbb Z_3$ action and hence $H^0(\mathcal A_\theta^{alg}, \mathcal A_\theta^{alg \ast})^{\mathbb Z_3} \cong \mathbb C$. It is evident that $H^0(\mathcal A_\theta^{alg}, \mathcal A_\theta^{alg \ast})^{\mathbb Z_4} \cong H^0(\mathcal A_\theta^{alg}, \mathcal A_\theta^{alg \ast})^{\mathbb Z_6} \cong \mathbb C$.\par

The cases $g=\omega$ and $g=\omega^2$ are similar and we shall consider one of them for computational purpose(say $g=\omega$). We notice that following is the Hochschild cohomology complex for $g=\omega$ 
$${}_{\omega}\mathcal A_\theta^{alg \ast} \xrightarrow{{}_{\omega}\alpha_1}{}_{\omega}\mathcal A_\theta^{alg \ast} \oplus {}_{\omega}\mathcal A_\theta^{alg \ast} \xrightarrow{{}_{\omega}\alpha_2} {}_{\omega}\mathcal A_\theta^{alg \ast} \rightarrow 0,$$
wherein with the bimodule structure of ${}_{\omega}\mathcal A_\theta^{alg \ast}$, the maps are as follows:

$${}_{\omega}\alpha_1(\varphi) = (U_2^{-1}\varphi - \varphi U_1,\displaystyle \frac{U_1U_2^{-1}}{\sqrt{\lambda}} \varphi - \varphi U_2)\; ;{}_{\omega}\alpha_2(\varphi_1, \varphi_2) = \displaystyle \frac{U_1U_2^{-1}}{\sqrt{\lambda}}\varphi_1-\lambda \varphi_1 U_2 - \lambda U_2^{-1} \varphi_2 + \varphi_2 U_1.$$

Hence the $\omega$ twisted Hochschild cohomology group $H^0(\mathcal A_\theta^{alg}, {}_{\omega}\mathcal A_\theta^{alg \ast})$ is the group $ker({}_{\omega}\alpha_1)$.
The group $ker({}_{\omega}\alpha_1)$ is the set of all elements $\varphi \in \mathcal A_\theta^{alg \ast}$ such that the following relations are satisfied:
\begin{center}
 $U_2^{-1} \varphi = \varphi U_1$ and $\displaystyle\frac{U_1 U_2^{-1}}{\sqrt{\lambda}} \varphi = \varphi U_2$.
\end{center}
Hence, we deduce that for $\varphi$ an element of $ker({}_{\omega}\alpha_1)$, its coefficients must satisfy the following:
$$\varphi_{n,{m+1}} = \lambda^{m+n} \varphi_{n-1,m}\text{ and }\varphi_{n-1,{m+1}} = \lambda^{n- \frac{1}{2}} \varphi_{n,m-1}.$$
\begin{lemma} The $\omega$ twisted zeroth Hochschild cohomology group $H^0(\mathcal A_\theta^{alg}, {}_{\omega}\mathcal A_\theta^{alg \ast})$  is generated by the coefficients $\varphi_{0,0}$(generates the cocycle $\mathcal E^\omega_{0,0}$), $\varphi_{0,1}$(generates the cocycle $\mathcal E^\omega_{0,1}$) and $\varphi_{0,-1}$(generates the cocycle $\mathcal E^\omega_{0,-1}$). And satisfies the following relations:\newline 
For $m -n \equiv 0 \pmod{3} $, $\varphi_{n,m} = \lambda^\frac{m^2+n^2+4mn}{6} \varphi_{0,0}$ and \\
for $m-n \equiv \pm 1 \pmod{3}$, $\varphi_{n,m} = \lambda^\frac{m^2+n^2+4mn-1}{6} \varphi_{0,\pm 1}$.
 \end{lemma}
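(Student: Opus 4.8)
The plan is to solve the two coefficient recursions explicitly by a two-stage reduction: first collapse every coefficient onto the ``first column'' $\{\varphi_{0,s}\}_{s\in\mathbb Z}$, then solve a single three-step recursion among those column entries. Throughout, fix a branch of the fractional powers of $\lambda$ (say $\lambda^{1/2}=e^{\pi i\theta}$), so that the exponents appearing below are unambiguous.

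First I would rewrite the two stated relations as elementary moves on $\mathbb Z^2$. Reindexing the first relation gives the diagonal move
\[
\varphi_{n,m}=\lambda^{\,n+m-1}\,\varphi_{n-1,m-1},
\]
which preserves $m-n$, and reindexing the second gives
\[
\varphi_{n,m}=\lambda^{\,n+\frac12}\,\varphi_{n+1,m-2},
\]
which lowers $m-n$ by $3$. Both moves preserve $m-n \pmod 3$, so the coefficients split into three families indexed by $m-n\equiv 0,1,2 \pmod 3$. Iterating the diagonal move $n$ times (and its inverse for $n<0$) telescopes the exponent $\sum_{j=0}^{n-1}(n+m-1-2j)$ to $nm$, yielding
\[
\varphi_{n,m}=\lambda^{\,nm}\,\varphi_{0,m-n}\qquad(n\in\mathbb Z),
\]
so every coefficient is pinned down by a column entry $\varphi_{0,s}$ with $s=m-n$.

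Next I would reduce the column entries. Setting $n=0$ in the second move and then applying the diagonal move once gives $\varphi_{0,s}=\lambda^{1/2}\varphi_{1,s-2}=\lambda^{1/2}\lambda^{\,s-2}\varphi_{0,s-3}$, i.e.
\[
\varphi_{0,s}=\lambda^{\,s-\frac32}\,\varphi_{0,s-3}.
\]
This single step-$3$ recursion expresses each $\varphi_{0,s}$ in terms of the representative seed $\varphi_{0,0}$, $\varphi_{0,1}$, or $\varphi_{0,-1}$ selected by the class of $s$; summing the arithmetic progression of exponents over $s=s_0+3\ell$ gives $\tfrac{s^2-s_0^2}{6}$, hence $\varphi_{0,s}=\lambda^{s^2/6}\varphi_{0,0}$ when $s\equiv0$ and $\varphi_{0,s}=\lambda^{(s^2-1)/6}\varphi_{0,\pm1}$ when $s\equiv\pm1$. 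Combining with $\varphi_{n,m}=\lambda^{nm}\varphi_{0,m-n}$ and simplifying $nm+\tfrac{(m-n)^2}{6}=\tfrac{m^2+n^2+4mn}{6}$ (resp.\ with the $-1$) produces exactly the two claimed closed forms.

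Finally I would argue that the three seeds are genuinely free, which is the only real subtlety. Because the two moves generate many loops on the lattice, one must a priori check that composing them around a loop returns the identity rather than multiplying by a nontrivial power of $\lambda$, which would force a seed to vanish. This is dispatched by the explicit formula itself: one verifies through the short identities $f(n,m)-f(n-1,m-1)=n+m-1$ and $f(n,m)-f(n+1,m-2)=n+\tfrac12$, with $f(n,m)=\tfrac{m^2+n^2+4mn}{6}$ (and likewise for the $\pm1$ families), that the proposed exponent is consistent with \emph{both} moves. Hence the closed form defines an honest element of $\ker({}_{\omega}\alpha_1)$ for every choice of $(\varphi_{0,0},\varphi_{0,1},\varphi_{0,-1})$ (no convergence condition being imposed on the dual), while the reduction above shows every element arises this way. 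Therefore $H^0(\mathcal A_\theta^{alg},{}_{\omega}\mathcal A_\theta^{alg\ast})$ is three-dimensional, freely generated by the cocycles $\mathcal E^\omega_{0,0},\mathcal E^\omega_{0,1},\mathcal E^\omega_{0,-1}$ attached to the three seeds. I expect this loop-consistency check to be the main obstacle; everything else is a telescoping computation.
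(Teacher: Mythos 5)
Your proposal is correct and follows essentially the same route as the paper: both use the first relation to move along diagonals $m-n=\mathrm{const}$ and the second to step between diagonals in the same class mod $3$, reducing everything to the seeds $\varphi_{0,0},\varphi_{0,1},\varphi_{0,-1}$ and then telescoping the exponents to the closed form $\lambda^{(m^2+n^2+4mn)/6}$ (resp.\ with the $-1$). The one substantive addition in your write-up is the explicit loop-consistency check that the closed form satisfies \emph{both} recursions, which is what guarantees the three seeds are genuinely free and the group is exactly three-dimensional; the paper leaves this verification implicit.
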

\begin{proof}
Using the relation $\varphi_{n,m+1} = \lambda^{n+m} \varphi_{n-1,m}$  we infer that the lattice points $(n,m)$ and $(n-1,m-1)$ belong to the same coboundary hence represent the same cocycle element. Furthermore the second relation $\varphi_{n-1,{m+1}} = \lambda^{n- \frac{1}{2}} \varphi_{n,m-1}$ relates $\varphi_{0,k}$ with $\varphi_{-1,{k+2}}$. Thus, we conclude that the group $H^0(\mathcal A_\theta^{alg}, {}_{\omega}\mathcal A_\theta^{alg \ast})$ is generated by the coefficients $\varphi_{0,0}$, $\varphi_{0,1}$ and $\varphi_{0,-1}$. This can be pictorially understood through the  following diagram.

\begin{center}
\begin{tikzpicture}
\draw[step=1.0, black, thin, xshift=.5cm, yshift=.5cm](-2,-2) grid(2,2);
\fill (.5,0.5) circle (2pt)node[below right]{$\varphi_{0,0}$};
\fill (-.5,0.5) circle (2pt);
\fill (1.5,0.5) circle (2pt);
\fill (-1.5,0.5) circle (2pt);
\fill (2.5,0.5) circle (2pt);
\fill (.5,1.5) circle (2pt)node[below right]{$\varphi_{0,1}$};
\fill (-.5,1.5) circle (2pt);
\fill (1.5,1.5) circle (2pt);
\fill (-1.5,1.5) circle (2pt);
\fill (2.5,1.5) circle (2pt);
\fill (.5,-.5) circle (2pt)node[below right]{$\varphi_{0,-1}$};
\fill (-.5,-.5) circle (2pt);
\fill (1.5,-.5) circle (2pt);
\fill (-1.5,-.5) circle (2pt);
\fill (2.5,-.5) circle (2pt);
\fill (.5,2.5) circle (2pt);
\fill (-.5,2.5) circle (2pt);
\fill (1.5,2.5) circle (2pt);
\fill (-1.5,2.5) circle (2pt);
\fill (2.5,2.5) circle (2pt);
\fill (.5,-1.5) circle (2pt);
\fill (-.5,-1.5) circle (2pt);
\fill (1.5,-1.5) circle (2pt);
\fill (-1.5,-1.5) circle (2pt);
\fill (2.5,-1.5) circle (2pt);
\draw[dashed](-4,-3)--(4,5)node[right]{};
\draw[dotted](-3,-3)--(5,5)node[right]{};
\draw[loosely dashdotted](-2,-3)--(6,5)node[right]{};
\draw[dashed](-1,-3)--(7,5)node[right]{};
\draw[dotted](0,-3)--(8,5)node[right]{};
\draw[loosely dashdotted](1,-3)--(9,5)node[right]{};
\draw[dashed](-7,-3)--(1,5)node[right]{};
\draw[dotted](-6,-3)--(2,5)node[right]{};
\draw[loosely dashdotted](-5,-3)--(3,5)node[right]{};
\draw[dashed](-1.5,2.5)--(-1.5,3)node[right]{};
\draw[dashed](-0.5,2.5)--(-0.5,3)node[right]{};
\draw[dashed](0.5,2.5)--(0.5,3)node[right]{};
\draw[dashed](1.5,2.5)--(1.5,3)node[right]{};
\draw[dashed](2.5,2.5)--(2.5,3)node[right]{};
\draw[dashed](-1.5,-4.0)--(-1.5,-1.5)node[right]{};
\draw[dashed](-0.5,-4.0)--(-0.5,-1.5)node[right]{};
\draw[dashed](0.5,-4.0)--(0.5,-1.5)node[right]{};
\draw[dashed](1.5,-4.0)--(1.5,-1.5)node[right]{};
\draw[dashed](2.5,-4.0)--(2.5,-1.5)node[right]{};
\draw[dashed](-4,1.5)--(-1.5,1.5)node[right]{};
\draw[dashed](-4,2.5)--(-1.5,2.5)node[right]{};
\draw[dashed](-4,-1.5)--(-1.5,-1.5)node[right]{};
\draw[dashed](-4,-.5)--(-1.5,-.5)node[right]{};
\draw[dashed](-4,.5)--(-1.5,.5)node[right]{};
\draw[dashed](2.5,1.5)--(5,1.5)node[right]{};
\draw[dashed](2.5,2.5)--(5,2.5)node[right]{};
\draw[dashed](2.5,-1.5)--(5,-1.5)node[right]{};
\draw[dashed](2.5,-.5)--(5,-.5)node[right]{};
\draw[dashed](2.5,.5)--(5,.5)node[right]{};

\end{tikzpicture}
\end{center}

Using the two relations, we easily see that for the case $m-n \equiv 0 \pmod{3}$, $\varphi_{0,3k} = \lambda^{\frac{3k^2}{2}}\varphi_{0,0}$. Further we notice that 
\begin{center}
$\varphi_{r, {3k+r}} = \lambda^{(3k+1)+(3k+3)+\cdots + (3k+2r-1)} \varphi_{0,3k}=\lambda^{(3rk+r^2)} \varphi_{0,3k}= \lambda^{(3rk+r^2)} \lambda^{\frac{3k^2}{2}}\varphi_{0,0}=\lambda^{\frac{(2r^2+6kr+3k^2)}{2}}\varphi_{0,0}$.
\end{center}
Hence, we have for $m -n \equiv 0 \pmod{3} $, $\varphi_{n,m} = \lambda^\frac{m^2+n^2+4mn}{6} \varphi_{0,0}$.
Now we consider the case $m -n \equiv 1 \pmod{3} $. Using the relations $\varphi_{n-1,{m+1}} = \lambda^{n- \frac{1}{2}} \varphi_{n,m-1}$ and $\varphi_{n,{m+1}} = \lambda^{m+n} \varphi_{n-1,m}$ successively we get that
$$\varphi_{0,3k+1} = \lambda^{\frac{3k^2}{2}+k}\varphi_{0,1}.$$
As in the previous case we notice that 
\begin{center}
$\varphi_{r, {3k+r+1}} = \lambda^{(3k+2)+(3k+4)+\cdots + (3k+2r)} \varphi_{0,3k+1}=\lambda^{(3rk+r(r+1))} \varphi_{0,3k+1}= \lambda^{(3rk+r(r+1))} \lambda^{\frac{3k^2}{2}+k}\varphi_{0,1}=\lambda^{\frac{(2r^2+6kr+3k^2+2r+2k)}{2}}\varphi_{0,1}$.
\end{center}
Hence, we have that for  $m -n \equiv 1 \pmod{3} $, $\varphi_{n,m} = \lambda^\frac{m^2+n^2+4mn-1}{6} \varphi_{0,1}$. A similar computation for the final case $m-n \equiv -1\pmod{3}$ completes the proof.
\end{proof}

\begin{lemma}
$H^0(\mathcal A_\theta^{alg} , {}_{\omega}\mathcal A_\theta^{alg \ast})^{\mathbb Z_3} \cong \mathbb C^3$.
\end{lemma}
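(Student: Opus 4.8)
The plan is to compute the induced action of the generator $\omega$ of $\mathbb Z_3$ on the three-dimensional space produced by the previous lemma, and to identify the invariants with its $(+1)$-eigenspace. Since $\mathbb Z_3$ is abelian, conjugation fixes the element $\omega$, so the generator acts as a linear automorphism of $H^0(\mathcal A_\theta^{alg}, {}_{\omega}\mathcal A_\theta^{alg \ast})$ rather than permuting the twisted summands of the paracyclic decomposition; the claim $\cong \mathbb C^3$ is therefore equivalent to the assertion that this automorphism is the identity.

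First I would record how $\omega$ transforms the lattice index of a monomial. From $\omega\cdot U_1 = U_2^{-1}$ and $\omega\cdot U_2 = U_1U_2^{-1}/\sqrt\lambda$, reordering with $U_2U_1=\lambda U_1U_2$ shows that $\omega$ carries the coefficient slot $(n,m)$ to $(m,-n-m)$ up to an explicit power of $\sqrt\lambda$. A short congruence check, $(-n-m)-m = -n-2m \equiv m-n \pmod 3$, shows that the $\omega$-action preserves each residue class $m-n\equiv 0,\pm 1 \pmod 3$. By the previous lemma these three classes are precisely the cosets carrying the generators $\mathcal E^\omega_{0,0}$, $\mathcal E^\omega_{0,1}$ and $\mathcal E^\omega_{0,-1}$, so $\omega$ is forced to act diagonally, sending each generator to a scalar multiple of itself. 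Because $\omega^3=1$, each scalar is a cube root of unity, and the dimension of the invariant space equals the number of these scalars equal to $1$; in particular it suffices to show all three equal $1$.

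The core of the argument is then to evaluate these three scalars. Following the procedure outlined above, I would push each $0$-cocycle $\varphi$, viewed as an element of the twisted dual module, into the bar complex, apply the $\omega$-action using the twisted bimodule structure of ${}_{\omega}\mathcal A_\theta^{alg \ast}$, and pull the result back to the Connes complex via the explicit chain homotopy maps of \cite[page 329]{Q1} and \cite[page 134]{C}. Comparing the resulting coefficient at the generating slot with the original, and reducing the image slot $(m,-n-m)$ back to its generating slot using the lemma's relations $\varphi_{n,m+1}=\lambda^{m+n}\varphi_{n-1,m}$ and $\varphi_{n-1,m+1}=\lambda^{n-1/2}\varphi_{n,m-1}$, produces each scalar explicitly. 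For instance the slot $(0,1)$ maps to $(1,-1)$, and the lemma's closed form $\varphi_{n,m}=\lambda^{(m^2+n^2+4mn-1)/6}\varphi_{0,1}$ rewrites $\varphi_{1,-1}=\lambda^{-1/2}\varphi_{0,1}$, a within-class factor I expect to cancel against the $\sqrt\lambda$-phase from $\omega$ acting on the monomial. Carrying this out for all three classes should yield the scalar $1$ in each case, hence $H^0(\mathcal A_\theta^{alg}, {}_{\omega}\mathcal A_\theta^{alg \ast})^{\mathbb Z_3}\cong\mathbb C^3$.

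The main obstacle will be precisely this phase bookkeeping: keeping consistent conventions for the half-integer powers of $\lambda$ that enter through the $\omega$-action on monomials, through the twist in the module structure, and through the coboundary identifications, and verifying that they combine to a trivial net factor in each of the three sectors. A useful sanity check is the global count: together with $H^0(\mathcal A_\theta^{alg}, \mathcal A_\theta^{alg \ast})^{\mathbb Z_3}\cong\mathbb C$ and the analogous $g=\omega^2$ computation, three invariant dimensions in each nontrivial twisted sector reproduce the total $H^0\cong\mathbb C^7$ of Theorem~\ref{thm:hoch}, so any scalar differing from $1$ would be detected immediately by an inconsistency in this count.
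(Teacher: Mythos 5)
Your proposal is correct and follows essentially the same route as the paper: you compute how $\omega$ sends the monomial slot $(n,m)$ to $(m,-n-m)$ with an explicit $\sqrt\lambda$-phase, observe that the residue class of $m-n$ mod $3$ is preserved so the action is diagonal on the three generators $\mathcal E^\omega_{0,0},\mathcal E^\omega_{0,\pm1}$, and verify the resulting scalar is $1$ via the closed-form exponent $\lambda^{(m^2+n^2+4mn)/6}$ (the paper does exactly this, checking the polynomial identity $n^2+m^2+4nm-3m^2-6nm=m^2+(-n-m)^2+4m(-n-m)$ rather than invoking the bar-complex pullback, which is unnecessary in degree zero). Your sample check $\varphi_{0,1}U_2\mapsto\lambda^{-1/2}\varphi_{0,1}U_1U_2^{-1}=\varphi_{1,-1}U_1U_2^{-1}$ is the same verification the paper performs in general; only your closing ``sanity check'' against the total count $\mathbb C^7$ should not be leaned on, as that theorem is itself deduced from this lemma.
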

\begin{proof}
The entry $\varphi_{n,m} U_1^n U_2^m$ under the action of $\omega$  transforms to $\varphi_{n,m} (U_2^{-n}) \displaystyle (\frac{U_1 U_2^{-1}}{\sqrt\lambda})^{m}$. Using the fact that $(U_1 U_2^{-1})^m = \lambda^{-\frac{m(m-1)}{2}} U_1^m U_2^{-m}$. We have 
\begin{center}
 $\varphi_{n,m} (U_2^{-n}) \displaystyle (\frac{U_1 U_2^{-1}}{\sqrt\lambda})^{m} = \varphi_{n,m}  \lambda^{\frac{-m}{2}}U_2^{-n} ({U_1 U_2^{-1}})^{m}=\varphi_{n,m} \lambda^{-\frac{m(m-1)}{2}} \lambda^{\frac{-m}{2}} U_2^{-n} U_1^m U_2^{-m} =  \varphi_{n,m} \lambda^{-\frac{m^2}{2}} \lambda^{-nm}  U_1^m U_2^{-n-m}$ .
\end{center}
We note that $((-n-m)-m)-(m-n) = -3m \equiv 0 \pmod{3}$, hence the coefficient of $\omega \cdot \varphi_{n,m} U_1^n U_2^m$ is generated by the same generator($\varphi_{0,0}$ or $\varphi_{0, \pm 1}$) which generates $\varphi_{n,m}$. \par

Let us consider the case $m-n \equiv 0 \pmod{3}$, in this case we have 
\begin{center}
$\varphi_{n,m} \lambda^{-\frac{m^2}{2}} \lambda^{-nm}  U_1^m U_2^{-n-m} = \lambda^{n^2+m^2+4nm} \lambda^{-\frac{m^2}{2}} \lambda^{-nm} \varphi_{0,0} U_1^m U_2^{-n-m}= \lambda^{\frac{n^2-2m^2-2nm}{6}}\varphi_{0,0} U_1^m U_2^{-n-m} = \lambda^{m^2+(-n-m)^2+4m(-n-m)} \varphi_{0,0} U_1^m U_2^{-n-m} = \varphi_{m, {-n-m}} U_1^m U_2^{-n-m}$
\end{center}
Hence we see that for $m-n \equiv 0 \pmod{3}$,  $\omega \cdot \varphi_{n,m} U_1^n U_2^m=\varphi_{m, {-n-m}} U_1^m U_2^{-n-m}$ and therefore is invariant under the action of $\omega$, the generator of the group $\mathbb Z_3$. For the cases $m-n \equiv \pm1 \pmod{3}$, the proof is similar and the key to the invariance is the polynomial $n^2+m^2+4mn$, which under the transformation $n \mapsto m$ and $m \mapsto {-n-m}$ satisfies the relation
$$n^2 + m^2 +4nm - 3m^2 - 6nm = m^2 + (-n-m)^2 + 4m(-n-m).$$
 \end{proof}

\begin{thm}  \label{thm:hoch03}
$H^0(\mathcal A_\theta^{alg} \rtimes \mathbb Z_3, (\mathcal A_\theta^{alg} \rtimes \mathbb Z_3)^ \ast) \cong \mathbb C^7$.
\end{thm}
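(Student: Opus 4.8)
The plan is to read off the answer from the paracyclic decomposition recorded above, which for $\Gamma = \mathbb Z_3 = \{1,\omega,\omega^2\}$ gives
$$H^0(\mathcal A_\theta^{alg} \rtimes \mathbb Z_3, (\mathcal A_\theta^{alg} \rtimes \mathbb Z_3)^{\ast}) = \bigoplus_{g \in \mathbb Z_3} H^0(\mathcal A_\theta^{alg}, {}_{g}\mathcal A_\theta^{alg \ast})^{\mathbb Z_3}.$$
Thus the theorem reduces to tallying the three $\mathbb Z_3$-invariant twisted summands indexed by $g \in \{1,\omega,\omega^2\}$, each of which has already been (or will be) computed by the method of pushing a Connes cocycle into the bar complex, applying $\omega$, and pulling back via the chain homotopy.

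First I would record the $g=1$ contribution. Following \cite{Q2}, the group $H^0(\mathcal A_\theta^{alg}, \mathcal A_\theta^{alg \ast})$ is one-dimensional, generated by the cocycle $\tau$, and we verified above that $\tau$ is $\mathbb Z_3$-invariant; hence this summand contributes $\mathbb C$. Next, the $g=\omega$ summand is exactly the content of the preceding lemma, namely $H^0(\mathcal A_\theta^{alg}, {}_{\omega}\mathcal A_\theta^{alg \ast})^{\mathbb Z_3} \cong \mathbb C^3$, with basis the invariant cocycles $\mathcal E^\omega_{0,0}, \mathcal E^\omega_{0,1}, \mathcal E^\omega_{0,-1}$.

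For the final summand $g=\omega^2$, I would run the parallel computation. Since $\omega^2 = \omega^{-1}$, its action on $\mathcal A_\theta^{alg}$ is obtained by inverting the defining matrix, and the twisted complex $({}_{\omega^2}\alpha_1, {}_{\omega^2}\alpha_2)$ has the same shape as the $\omega$ complex. The two kernel relations for ${}_{\omega^2}\alpha_1$ again collapse the lattice of coefficients $\varphi_{n,m}$ onto the three seeds $\varphi_{0,0}, \varphi_{0,1}, \varphi_{0,-1}$ according to the congruence class of $m-n \pmod 3$, and the same quadratic-form invariance argument now uses the $\omega^2$ substitution $n \mapsto -n-m$, $m \mapsto n$ on the form $n^2+m^2+4mn$ to confirm that all three seeds survive the $\mathbb Z_3$-averaging. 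This gives $\mathbb C^3$ once more, and summing the three summands yields $1+3+3 = 7$, which is the claim.

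The main obstacle will be confirming that the $g=\omega^2$ case genuinely mirrors $g=\omega$ rather than merely resembling it. Concretely, one must track the half-integer powers of $\lambda$ (the factors $\lambda^{n-1/2}$ appearing in the second kernel relation) through the $\omega^2$ action to be sure the three seeds remain independent and invariant, and not secretly linked by a coboundary or killed by the averaging. This is a routine but attentive bookkeeping step; once it is checked, the theorem follows immediately by addition.
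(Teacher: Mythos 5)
Your proposal is correct and follows essentially the same route as the paper: the paracyclic decomposition into the three twisted summands, the one-dimensional invariant contribution from $g=1$ generated by $\tau$, the three-dimensional invariant subspace for $g=\omega$ from the lemma, and the observation that $g=\omega^2$ mirrors $g=\omega$ (which the paper likewise asserts without separate computation), giving $1+3+3=7$. Your explicit flagging of the bookkeeping needed to justify the $\omega^2$ case is a reasonable point of care, but it does not constitute a different argument.
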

\begin{proof} Using the paracyclic decomposition of the group $H^0(\mathcal A_\theta^{alg} \rtimes \mathbb Z_3, (\mathcal A_\theta^{alg} \rtimes \mathbb Z_3)^ \ast)$  we have the following relation
\begin{center}
$H^0(\mathcal A_\theta^{alg} \rtimes \mathbb Z_3, (\mathcal A_\theta^{alg} \rtimes \mathbb Z_3)^ \ast) = H^0(\mathcal A_\theta, {}_{\omega}\mathcal A_\theta^{alg \ast})^{\mathbb Z_3} \displaystyle \oplus  H^0(\mathcal A_\theta, {}_{\omega^2}\mathcal A_\theta^{alg \ast})^{\mathbb Z_3} \displaystyle \oplus  H^0(\mathcal A_\theta, \mathcal A_\theta^{alg \ast})^{\mathbb Z_3} = \mathbb C^3 \displaystyle \oplus \mathbb C^3 \displaystyle \oplus \mathbb C = \mathbb C^7$.
\end{center}
Hence proved.
\end{proof}

\centerline{\uline{The case $\Gamma = \mathbb Z_4$.}}
The group $\mathbb Z_4$ is embedded in $SL(2,\mathbb Z)$ through its generator $i= \left[
 \begin{array}{cc}
   0 & -1 \\
   1 & 0
 \end{array} \right]\in SL(2,\mathbb Z)$. The generator acts on $\mathcal A_\theta^{alg}$ in the following way
\begin{center}
$U_1 \mapsto U_2^{-1}, U_2 \mapsto U_1$.
\end{center}
Below is the paracyclic decomposition of the cohomology group $H^0(\mathcal A_\theta \rtimes \mathbb Z_4, (\mathcal A_\theta^{alg} \rtimes \mathbb Z_4)^{\ast})$;
$$H^0(\mathcal A_\theta^{alg} \rtimes \mathbb Z_4, (\mathcal A_\theta^{alg} \rtimes \mathbb Z_4)^{\ast}) = \displaystyle \bigoplus_{g \in \mathbb Z_4} H^0(\mathcal A_\theta^{alg}, {}_{g}\mathcal A_\theta^{alg \ast})^{\mathbb Z_4}.$$
We recall that the zeroth Hochschild cohomology group $H^0(\mathcal A_\theta^{alg}, \mathcal A_\theta^{alg \ast})$ is one dimensional and the generator $\tau$ is invariant under $\mathbb Z_4$ action. The $-1$ twisted zeroth cohomology group,  $H^\bullet(\mathcal A_\theta^{alg}, {}_{-1}\mathcal A_\theta^{alg})$ has been calculated in \cite{Q2}. For $g=\pm i$, we notice that the $i$ twisted Hochschild cohomology group  $H^\bullet(\mathcal A_\theta^{alg}, {}_{i}\mathcal A_\theta^{alg})$ and the $-i$ twisted Hochschild cohomology group $H^\bullet(\mathcal A_\theta^{alg}, {}_{-i}\mathcal A_\theta^{alg})$ are isomorphic. Hence we consider the case $g=i$ for our computational purpose. \par
For $g=i$ the following is the Hochschild cohomology complex for the Connes resolution.
\begin{center}
${}_{i}\mathcal A_\theta^{alg \ast} \xrightarrow{{}_{i}\alpha_1}{}_{i}\mathcal A_\theta^{alg \ast} \oplus {}_{i}\mathcal A_\theta^{alg \ast} \xrightarrow{{}_{i}\alpha_2} {}_{i}\mathcal A_\theta^{alg \ast} \rightarrow 0$,
\end{center}
where the cochain maps are as follows:
$${}_{i}\alpha_1(\varphi) = (U_2^{-1} \varphi - \varphi U_1, U_1 \varphi - \varphi U_2);\; {}_{i}\alpha_2(\varphi_1, \varphi_2) = U_1 \varphi_1 - \lambda \varphi_1 U_2 + \varphi_2 U_1 - \lambda U_2^{-1} \varphi_2.$$

Hence the $i$ twisted Hochschild cohomology group $H^0(\mathcal A_\theta^{alg}, {}_{i}\mathcal A_\theta^{alg \ast})$ is the group $ker({}_{i}\alpha_1)$.
The group $ker({}_{i}\alpha_1)$ is the set of all entries $\varphi \in \mathcal A_\theta^{alg \ast}$ such that the following relations hold:
\begin{center}
 $U_2^{-1} \varphi = \varphi U_1$ and $U_1 \varphi = \varphi U_2$.
\end{center}
Hence, we deduce that $\varphi_{n,{m+1}} = \lambda^{m+n} \varphi_{n-1,m}$ and $\varphi_{n-1,m} =  \varphi_{n,m-1}$.

\begin{lemma} The $i$ twisted zeroth Hochschild cohomology group $H^0(\mathcal A_\theta^{alg}, {}_{i}\mathcal A_\theta^{alg \ast})$ is generated by the coefficients $\varphi_{0,0}$(generates the cocycle $\mathcal F^i_{0,0}$) and $\varphi_{0,1}$(generates the cocycle $\mathcal F^i_{0,1}$). And satisfies the following relations:\\
For $m +n \equiv 0 \pmod{2} $, $\varphi_{n,m} = \lambda^\frac{m^2+n^2+2mn}{4} \varphi_{0,0}$ and \\
for $m+n \equiv 1 \pmod{2}$, $\varphi_{n,m} = \lambda^\frac{m^2+n^2+2mn-1}{4} \varphi_{0,1}$.
 \end{lemma}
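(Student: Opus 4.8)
The plan is to mirror the strategy of the preceding $\mathbb{Z}_3$ lemma, taking advantage of the fact that the two relations cutting out $\ker({}_{i}\alpha_1)$ are now considerably simpler. These relations are $\varphi_{n,m+1} = \lambda^{m+n}\varphi_{n-1,m}$ and $\varphi_{n-1,m} = \varphi_{n,m-1}$, and I would first read them geometrically on the lattice $\mathbb{Z}^2$ of coefficient indices. The second relation carries $(n-1,m)$ to $(n,m-1)$, a step preserving the sum $n+m$; since its multiplier is $1$, it says that $\varphi$ is constant along each anti-diagonal $\{n+m=c\}$. The first relation links the anti-diagonal $c=n+m+1$ to the anti-diagonal $c-2=n+m-1$, so it only connects anti-diagonals of equal parity. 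Consequently the index set splits into the even-$c$ and odd-$c$ classes, and every coefficient is tied back either to $\varphi_{0,0}$ (even $c$) or to $\varphi_{0,1}$ (odd $c$); this establishes the two claimed generators, namely $\mathcal F^i_{0,0}$ and $\mathcal F^i_{0,1}$.

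For the explicit exponents I would collapse the two relations into a single recursion along the line $n=0$. Setting $n=0$ in the first relation gives $\varphi_{0,m+1}=\lambda^{m}\varphi_{-1,m}$, and applying the second relation $\varphi_{-1,m}=\varphi_{0,m-1}$ yields $\varphi_{0,m+1}=\lambda^{m}\varphi_{0,m-1}$. Telescoping over even steps gives $\varphi_{0,2k}=\lambda^{1+3+\cdots+(2k-1)}\varphi_{0,0}=\lambda^{k^2}\varphi_{0,0}$, and over odd steps $\varphi_{0,2k+1}=\lambda^{2+4+\cdots+2k}\varphi_{0,1}=\lambda^{k^2+k}\varphi_{0,1}$. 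Using once more that $\varphi$ is constant on anti-diagonals, any $(n,m)$ with $n+m=c$ satisfies $\varphi_{n,m}=\lambda^{c^2/4}\varphi_{0,0}$ for even $c$ and $\varphi_{n,m}=\lambda^{(c^2-1)/4}\varphi_{0,1}$ for odd $c$; substituting $c=n+m$ and expanding $(n+m)^2=n^2+m^2+2mn$ reproduces the two stated formulas.

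The step I expect to require the most care is \emph{consistency}, since the system is over-determined: two relations constrain a single two-parameter family, so one must check that the proposed closed form actually solves both relations at every $(n,m)$, not merely along the paths used to derive it. I would verify this directly. Equality of the sums $n+m$ on the two sides makes the second relation automatic, and for the first relation the algebraic identity $(c+1)^2=c^2+2c+1=4c+(c-1)^2$ shows that the multiplier $\lambda^{m+n}=\lambda^{c}$ is exactly absorbed by the change in the exponent $\tfrac{(n+m)^2}{4}$ (respectively $\tfrac{(n+m)^2-1}{4}$) between anti-diagonals $c\pm1$. Finally, because $\theta\notin\mathbb{Q}$ the scalar $\lambda$ is not a root of unity, so distinct exponents produce distinct nonzero values and neither generator degenerates; as the even- and odd-$c$ families have disjoint supports, $\mathcal F^i_{0,0}$ and $\mathcal F^i_{0,1}$ are linearly independent elements of $\mathcal A_\theta^{alg\ast}$. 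Everything else is the routine telescoping bookkeeping already rehearsed in the $\mathbb{Z}_3$ case.
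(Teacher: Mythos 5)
Your argument is correct and follows essentially the same route as the paper: both identify the two generators via the parity of $n+m$ and obtain the exponents by telescoping powers of $\lambda$ along lattice paths, you merely choosing the column $n=0$ plus constancy on anti-diagonals where the paper walks along diagonals from $(0,2k)$ to $(r,2k+r)$. Your explicit well-definedness check that the closed form satisfies both defining relations is a small but worthwhile addition that the paper leaves implicit.
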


\begin{proof}
The relation $\varphi_{n,m+1} = \lambda^{n+m} \varphi_{n-1,m}$ relates all the lattice points $(n,m)$ with $(n-1,m-1)$. Furthermore, using the relation $\varphi_{n-1,m} =  \varphi_{n,m-1}$ we see that  the lattice points $(n,m)$ with $(n-1,m+1)$ are generated by the same coefficient. It can be easily concluded that $\varphi_{0,0}$ generates all the coefficients $\varphi_{r,s}$ such that $r+s$ is even and the coefficient $\varphi_{0,1}$ generates all the coefficients $\varphi_{r,s}$ such that $r+s$ is odd. This can be pictorially realised by the following diagram

\begin{center}
\begin{tikzpicture}
\draw[step=1.0, black, thin, xshift=.5cm, yshift=.5cm](-2,-2) grid(2,2);
\fill (.5,0.5) circle (2pt)node[below right]{$\varphi_{0,0}$};
\fill (-.5,0.5) circle (2pt);
\fill (1.5,0.5) circle (2pt);
\fill (-1.5,0.5) circle (2pt);
\fill (2.5,0.5) circle (2pt);
\fill (.5,1.5) circle (2pt)node[below right]{$\varphi_{0,1}$};
\fill (-.5,1.5) circle (2pt);
\fill (1.5,1.5) circle (2pt);
\fill (-1.5,1.5) circle (2pt);
\fill (2.5,1.5) circle (2pt);
\fill (.5,-.5) circle (2pt);
\fill (-.5,-.5) circle (2pt);
\fill (1.5,-.5) circle (2pt);
\fill (-1.5,-.5) circle (2pt);
\fill (2.5,-.5) circle (2pt);
\fill (.5,2.5) circle (2pt);
\fill (-.5,2.5) circle (2pt);
\fill (1.5,2.5) circle (2pt);
\fill (-1.5,2.5) circle (2pt);
\fill (2.5,2.5) circle (2pt);
\fill (.5,-1.5) circle (2pt);
\fill (-.5,-1.5) circle (2pt);
\fill (1.5,-1.5) circle (2pt);
\fill (-1.5,-1.5) circle (2pt);
\fill (2.5,-1.5) circle (2pt);
\draw[dashed](-4,-3)--(4,5)node[right]{};
\draw[dotted](-3,-3)--(5,5)node[right]{};
\draw[dashed](-2,-3)--(6,5)node[right]{};
\draw[dashed](-1,-3)--(7,5)node[right]{};
\draw[dotted](0,-3)--(8,5)node[right]{};
\draw[dashed](1,-3)--(9,5)node[right]{};
\draw[dashed](-7,-3)--(1,5)node[right]{};
\draw[dotted](-6,-3)--(2,5)node[right]{};
\draw[dashed](-5,-3)--(3,5)node[right]{};
\draw[dashed](-1.5,2.5)--(-1.5,3)node[right]{};
\draw[dashed](-0.5,2.5)--(-0.5,3)node[right]{};
\draw[dashed](0.5,2.5)--(0.5,3)node[right]{};
\draw[dashed](1.5,2.5)--(1.5,3)node[right]{};
\draw[dashed](2.5,2.5)--(2.5,3)node[right]{};
\draw[dashed](-1.5,-4.0)--(-1.5,-1.5)node[right]{};
\draw[dashed](-0.5,-4.0)--(-0.5,-1.5)node[right]{};
\draw[dashed](0.5,-4.0)--(0.5,-1.5)node[right]{};
\draw[dashed](1.5,-4.0)--(1.5,-1.5)node[right]{};
\draw[dashed](2.5,-4.0)--(2.5,-1.5)node[right]{};
\draw[dashed](-4,1.5)--(-1.5,1.5)node[right]{};
\draw[dashed](-4,2.5)--(-1.5,2.5)node[right]{};
\draw[dashed](-4,-1.5)--(-1.5,-1.5)node[right]{};
\draw[dashed](-4,-.5)--(-1.5,-.5)node[right]{};
\draw[dashed](-4,.5)--(-1.5,.5)node[right]{};
\draw[dashed](2.5,1.5)--(5,1.5)node[right]{};
\draw[dashed](2.5,2.5)--(5,2.5)node[right]{};
\draw[dashed](2.5,-1.5)--(5,-1.5)node[right]{};
\draw[dashed](2.5,-.5)--(5,-.5)node[right]{};
\draw[dashed](2.5,.5)--(5,.5)node[right]{};

\end{tikzpicture}
\end{center}

Using the two relations, we easily see that $\varphi_{0,2k} = \lambda^{k^2}\varphi_{0,0}$
\begin{center}
$\varphi_{r, {2k+r}} = \lambda^{(2k+1)+(2k+3)+\cdots + (2k+(2r-1))} \varphi_{0,2k}=\lambda^{(2rk+r^2)} \varphi_{0,2k}= \lambda^{(2rk+r^2)} \lambda^{k^2}\varphi_{0,0}=\lambda^{(r+k)^2}\varphi_{0,0}$.
\end{center}
Hence we have the proved the theorem for case $m+n \equiv 0 \mod{2}$. A similar argument for the case $m+n \equiv 1 \pmod{2}$ completes the proof of the lemma.
\end{proof}
\begin{lemma}
$H^0(\mathcal A_\theta^{alg}, {}_{i}\mathcal A_\theta^{alg \ast})^{\mathbb Z_4} \cong \mathbb C^2$.
\end{lemma}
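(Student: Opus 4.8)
The plan is to mirror the argument just used for the $\omega$-twisted group in the $\mathbb{Z}_3$ case. By the preceding lemma, $\ker({}_{i}\alpha_1)=H^0(\mathcal A_\theta^{alg}, {}_{i}\mathcal A_\theta^{alg \ast})$ is two-dimensional, freely generated by the coefficients $\varphi_{0,0}$ (the cocycle $\mathcal F^i_{0,0}$) and $\varphi_{0,1}$ (the cocycle $\mathcal F^i_{0,1}$). The claim is that the generator $i$ of $\mathbb{Z}_4$ fixes this entire two-dimensional space, so that the invariant subspace is again $\mathbb{C}^2$.

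First I would compute the action of $i$ on a single monomial. Since $i$ sends $U_1 \mapsto U_2^{-1}$ and $U_2 \mapsto U_1$, the entry $\varphi_{n,m} U_1^n U_2^m$ is carried to $\varphi_{n,m}(U_2^{-1})^n(U_1)^m = \varphi_{n,m}\, U_2^{-n} U_1^m$. Reordering into standard form by means of the commutation rule $U_2^a U_1^b = \lambda^{ab} U_1^b U_2^a$ gives
$$i \cdot (\varphi_{n,m} U_1^n U_2^m) = \lambda^{-nm}\,\varphi_{n,m}\, U_1^m U_2^{-n},$$
a term supported at the lattice point $(m,-n)$.

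The essential observation is then that $m + (-n) \equiv m+n \pmod 2$, so $(m,-n)$ lies in the same parity class as $(n,m)$; hence $i$ does not mix the two generators, and it suffices to verify, separately in the even and odd classes, that the produced coefficient $\lambda^{-nm}\varphi_{n,m}$ agrees with the coefficient $\varphi_{m,-n}$ prescribed by the previous lemma. Substituting the explicit formulas from that lemma reduces the even case to the polynomial identity
$$\frac{m^2+n^2+2mn}{4} - nm = \frac{m^2+n^2-2mn}{4},$$
that is $(m+n)^2 - 4nm = (m-n)^2$, together with its shifted version (subtracting $1$ in the numerators) in the odd class. Granting these, applying $i$ to an arbitrary $\varphi = \sum_{n,m}\varphi_{n,m} U_1^n U_2^m \in \ker({}_{i}\alpha_1)$ and reindexing by $(n,m)\mapsto(m,-n)$ returns $\varphi$ unchanged, so every kernel element is $\mathbb{Z}_4$-invariant.

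The verification is routine arithmetic once the commutation factor $\lambda^{-nm}$ is tracked correctly; the one place demanding care is precisely this normalization when reordering $U_2^{-n}U_1^m$, since an error there would spuriously destroy invariance and cut the space down to a proper subspace. Because the whole kernel turns out to be fixed by the generator, it is fixed by all of $\mathbb{Z}_4$, whence $H^0(\mathcal A_\theta^{alg}, {}_{i}\mathcal A_\theta^{alg \ast})^{\mathbb{Z}_4} \cong \mathbb{C}^2$.
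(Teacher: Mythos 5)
Your argument is correct and coincides with the paper's own proof: both compute $i\cdot(\varphi_{n,m}U_1^nU_2^m)=\lambda^{-nm}\varphi_{n,m}U_1^mU_2^{-n}$, note that the parity class of $(m,-n)$ matches that of $(n,m)$, and reduce invariance to the identity $n^2+m^2+2nm-4nm=m^2+(-n)^2+2m(-n)$. No substantive difference.
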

\begin{proof}
Equivalently, we need to show that the two cocycles $\mathcal F^i_{0,0}$ and $\mathcal F^i_{0,1}$ are $\mathbb Z_4$ invariant. The entry $\varphi_{n,m} U_1^n U_2^m$ under the action of $i$  transforms to $\varphi_{n,m} (U_2^{-n}) U_1^{m}$. Using the fact that $U_2^{-n} U_1^{m} = \lambda^{-nm} U_1^m U_2^{-n}$. We notice that if $m+n \equiv 0 \pmod{2}$ then $m-n \equiv 0 \pmod{2}$ and similar is the case for $m+n \equiv 1 \pmod{2}$. Hence it makes sense to compare the coefficient of $ i \cdot \varphi_{n,m} U_1^n U_2^m$ with $\varphi_{n,m}$. \par
Using the result of previous lemma, for $m+n \equiv 0 \pmod{2}$, we have 
\begin{center}
 $ \varphi_{n,m} U_2^{-n} U_1^{m}=\lambda^{-nm} \varphi_{n,m} U_1^m U_2^{-n} = \lambda^{-nm} \lambda^{\frac{n^2+m^2+2nm}{4}} \varphi_{0,0} U_1^m U_2^{-n}=\lambda^{\frac{n^2+m^2-2nm}{4}} \varphi_{0,0} U_1^m U_2^{-n}=\varphi_{m,-n} U_1^m U_2^{-n}$ .
\end{center}

Hence we see that for $m+n \equiv 0 \pmod{2}$,  $i \cdot \varphi_{n,m} U_1^n U_2^m=\varphi_{m,-n} U_1^m U_2^{-n}$ and therefore $i$ leaves the cocycle $\mathcal F^i_{0,0}$ invariant under its action. For the case $m+n \equiv 1 \pmod{2}$, the proof is similar and the key to the invariance is the polynomial $n^2+m^2+2mn$, which is under the transformation $n \mapsto m$ and $m \mapsto {-n}$ satisfies the relation
$$n^2 + m^2 +2nm - 4nm = m^2 + (-n)^2 + 2m(-n).$$
\end{proof}
\begin{lemma}
$H^0(\mathcal A_\theta^{alg}, {}_{-1}\mathcal A_\theta^{alg \ast})^{\mathbb Z_4} \cong \mathbb C^3$.

\end{lemma}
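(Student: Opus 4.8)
The plan is to mirror the two preceding lemmas, now working with the order-two element $-1 = i^2$ of $\mathbb Z_4$ and then cutting down to the $\mathbb Z_4$-invariants. Since $\mathbb Z_4 = \langle i\rangle$ is abelian, a class in $H^0(\mathcal A_\theta^{alg}, {}_{-1}\mathcal A_\theta^{alg \ast})$ is $\mathbb Z_4$-invariant if and only if it is fixed by the generator $i$, so it suffices to understand the $i$-action on the $(-1)$-twisted group. First I would record the $(-1)$-twisted Hochschild complex: the element $-1 \in SL(2,\mathbb Z)$ acts by $U_1 \mapsto U_1^{-1}$, $U_2 \mapsto U_2^{-1}$, so the first Connes map is ${}_{-1}\alpha_1(\varphi) = (U_1^{-1}\varphi - \varphi U_1,\ U_2^{-1}\varphi - \varphi U_2)$, and $H^0 = \ker({}_{-1}\alpha_1)$ is the set of $\varphi$ satisfying $U_1^{-1}\varphi = \varphi U_1$ and $U_2^{-1}\varphi = \varphi U_2$. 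Translating into coefficients as on the preceding pages yields the two recurrences $\varphi_{n+1,m} = \lambda^m \varphi_{n-1,m}$ and $\varphi_{n,m+1} = \lambda^n \varphi_{n,m-1}$. Both relations preserve the parity class $(n \bmod 2, m \bmod 2)$, so $\varphi$ is determined by the four coefficients $\varphi_{0,0}, \varphi_{1,0}, \varphi_{0,1}, \varphi_{1,1}$, recovering the computation $H^0(\mathcal A_\theta^{alg}, {}_{-1}\mathcal A_\theta^{alg \ast}) \cong \mathbb C^4$ from \cite{Q2}; these generators are the cocycles $\mathcal D_{0,0}, \mathcal D_{1,0}, \mathcal D_{0,1}, \mathcal D_{1,1}$.

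Next I would solve the recurrences in closed form. Both relations are satisfied by the quadratic exponent $Q(n,m) = \tfrac{nm}{2}$, giving $\varphi_{n,m} = \lambda^{nm/2}\varphi_{\mathrm{base}}$, where $\varphi_{\mathrm{base}}$ is the generator attached to the parity class of $(n,m)$. Exactly as in the $g=i$ lemma, the generator $i$ sends $U_1^n U_2^m \mapsto U_2^{-n}U_1^m = \lambda^{-nm}U_1^m U_2^{-n}$, i.e. it moves the lattice site $(n,m)$ to $(m,-n)$ and multiplies the coefficient by $\lambda^{-nm}$:
$$i\cdot (\varphi_{n,m}U_1^n U_2^m) = \lambda^{-nm}\varphi_{n,m}\, U_1^m U_2^{-n}.$$

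The decisive step is to compare this with the coefficient $\varphi_{m,-n}$ that already sits at the site $(m,-n)$. Using the closed form, $\lambda^{-nm}\varphi_{n,m} = \lambda^{-nm/2}\varphi_{\mathrm{base}(n,m)}$ while $\varphi_{m,-n} = \lambda^{-nm/2}\varphi_{\mathrm{base}(m,-n)}$, so the two agree precisely when $\mathrm{base}(n,m) = \mathrm{base}(m,-n)$; the underlying identity is $\tfrac{nm}{2} - nm = \tfrac{m(-n)}{2}$, the analogue of the ``key polynomial'' identities used above. Since $(n,m)\mapsto(m,-n)$ fixes the parity classes $(0,0)$ and $(1,1)$ but interchanges $(1,0)$ and $(0,1)$, I conclude $i\cdot\mathcal D_{0,0} = \mathcal D_{0,0}$, $i\cdot\mathcal D_{1,1} = \mathcal D_{1,1}$, and $i\cdot\mathcal D_{1,0} = \mathcal D_{0,1}$ (hence $i\cdot\mathcal D_{0,1} = \mathcal D_{1,0}$). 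Therefore the space of invariants is spanned by $\mathcal D_{0,0}$, $\mathcal D_{1,1}$ and the symmetric combination $\mathcal D_{0,1}+\mathcal D_{1,0}$, which gives $H^0(\mathcal A_\theta^{alg}, {}_{-1}\mathcal A_\theta^{alg \ast})^{\mathbb Z_4} \cong \mathbb C^3$.

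I expect the only real obstacle to be bookkeeping: tracking the half-integer powers of $\lambda$ together with the reordering factor $\lambda^{-nm}$ and confirming that the invariance scalars are exactly $1$ on the diagonal classes (and that the interchange of the off-diagonal classes is scalar-free), so that the honest generator of each fixed cocycle is reproduced rather than a nonzero multiple of it. This is the same normalization check that appears in the $\mathbb Z_3$ and $g=i$ lemmas, and once the identity $\tfrac{nm}{2} - nm = \tfrac{m(-n)}{2}$ is in hand it is routine.
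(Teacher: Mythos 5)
Your proposal is correct and follows essentially the same route as the paper: both reduce to the four parity classes $\mathcal D_{0,0},\mathcal D_{0,1},\mathcal D_{1,0},\mathcal D_{1,1}$ of the $(-1)$-twisted zeroth cohomology (the paper cites \cite{Q2} for this, you rederive the recurrences), compute $i\cdot(\varphi_{n,m}U_1^nU_2^m)=\lambda^{-nm}\varphi_{n,m}U_1^mU_2^{-n}$, and match coefficients to conclude that $\mathcal D_{0,0}$ and $\mathcal D_{1,1}$ are fixed while $\mathcal D_{0,1}$ and $\mathcal D_{1,0}$ are interchanged without a scalar, giving the invariant span $\mathcal D_{0,0},\ \mathcal D_{1,1},\ \mathcal D_{0,1}+\mathcal D_{1,0}$. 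Your uniform closed form $\varphi_{n,m}=\lambda^{nm/2}c_{\mathrm{class}}$ and the single identity $\tfrac{nm}{2}-nm=\tfrac{m(-n)}{2}$ just package the paper's four case-by-case exponent checks more compactly.
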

\begin{proof}
The cocycles of $H^0(\mathcal A_\theta^{alg}, {}_{-1}\mathcal A_\theta^{alg \ast})$ are described in \cite{Q2}. They are generated by the coefficients $\varphi_{0,0}$, $\varphi_{1,0}$, $\varphi_{0,1}$ and $\varphi_{1,1}$. A typical cocycle of $H^0(\mathcal A_\theta^{alg}, {}_{-1}\mathcal A_\theta^{alg \ast})$ is of the form 
\begin{center}
$\Phi= a \mathcal D_{0,0} + b\mathcal D_{0,1} + c\mathcal D_{1,0} + d\mathcal D_{1,1}$ where, $a,b,c,d \in \mathbb C$.
\end{center}
For $0 \leq i,j \leq 1$ the cocycles $\mathcal D_{i,j}$ are generated by the coefficients $\varphi_{i,j}$. Consider the cocycle $\mathcal D_{0,0}$, its entries are of the form $\varphi_{2n,2m}$. These are generated by $\varphi_{0,0}$ and satisfies the relation $\varphi_{2n,2m} = \lambda^{2nm} \varphi_{0,0}$. By the following calculations we conclude that the cocycle $\mathcal D_{0,0}$ is invariant under the action of $\mathbb Z_4$\begin{center}
 $i \cdot \varphi_{2n,2m} U_1^{2n} U_2^{2m} = \varphi_{2n,2m}U_2^{-2n}U_1^{2m}= \varphi_{2n,2m} \lambda^{-4nm} U_1^{2m} U_2^{2n}=\lambda^{2nm} \lambda^{-4nm} \varphi_{),0} U_1^{-2m} U_2^{2n} = \lambda^{-2nm} \varphi_{0,0} U_1^{-2m} U_2^{2n} = \varphi_{-2m,2n} U_1^{-2m} U_2^{2n}$.
\end{center}
Similarly we consider the cocycle $\mathcal D_{1,1}$, its entries satisfy the relation $\varphi_{2n+1, 2n+1} = \lambda^{2nm+n+m}\varphi_{1,1}$. We have the following relations
\begin{center} $i \cdot \varphi_{2n+1,2m+1} U_1^{2n+1} U_2^{2m+1} = \varphi_{2n+1,2m+1} U_2^{-2n-1}U_1^{2m+1}=\varphi_{2n+1,2m+1} \lambda^{-(2n+1)(2m+1)}U_1^{2m+1}U_2^{-2n-1}=\varphi_{1,1} \lambda^{2nm+n+m}\lambda^{-(2n+1)(2m+1)}U_1^{2m+1}U_2^{-2n-1}=\varphi_{1,1} \lambda^{2nm+n+m-4nm-2n-2m-1}U_1^{2m+1}U_2^{-2n-1}=\varphi_{1,1} \lambda^{-2nm-n-m-1}U_1^{2m+1}U_2^{-2n-1} =\varphi_{1,1} \lambda^{2(-n-1)m+m+(-n-1)}U_1^{2m+1}U_2^{2(-n-1)+1}=\varphi_{2m+1,-2n-1} U_1^{2m+1} U_2^{-2n-1} .$
\end{center}
Hence the cocycle $\mathcal D_{1,1}$ belongs to the group $H^0(\mathcal A_\theta^{alg}, {}_{-1}\mathcal A_\theta^{alg \ast})^{\mathbb Z_4}$. \par
We now observe the action of $i$ on the cocycle $\mathcal D_{0,1}$, an entry $\varphi_{2n, 2m+1} U_1^{2n} U_2^{2m+1}$ when acted by $i$ has the following transformations
\begin{center}
$i \cdot \varphi_{2n, 2m+1} U_1^{2n} U_2^{2m+1} = \varphi_{2n, 2m+1} U_2^{-2n} U_1^{2m+1} = \varphi_{2n, 2m+1}\lambda^{-2n(2m+1)} U_1^{2m+1} U_2^{-2n}=\varphi_{0,1} \lambda^{2nm+n}\lambda^{-2n(2m+1)} U_1^{2m+1} U_2^{-2n}=\varphi_{0,1} \lambda^{-2nm-n} U_1^{2m+1} U_2^{-2n} =\frac{\varphi_{0,1}}{\varphi_{1,0}}\varphi_{2m+1,-2n}U_1^{2m+1} U_2^{-2n}$. 
\end{center}
Similarly, when $i$ acts on the cocycle $\mathcal D_{1,0}$, the entries of the form $\varphi_{2n+1,2m} U_1^{2n+1} U_2^{2m}$ have the following transformations
\begin{center}
$i \cdot \varphi_{2n+1, 2m} U_1^{2n+1} U_2^{2m} = \varphi_{2n+1, 2m} U_2^{-2n-1} U_1^{2m} = \varphi_{2n+1, 2m}\lambda^{-2m(2n+1)} U_1^{2m} U_2^{-2n-1}=\varphi_{1,0} \lambda^{2nm+m}\lambda^{-2m(2n+1)} U_1^{2m} U_2^{-2n-1}=\varphi_{1,0} \lambda^{-2nm-m} U_1^{2m} U_2^{-2n-1} =\displaystyle \frac{\varphi_{1,0}}{\varphi_{0,1}}\varphi_{-2m,2n+1}U_1^{2m} U_2^{-2n-1}$. 
\end{center}
 Let us consider the action of $i$ on a typical cocycle $\Phi$. Then we have
\begin{center}
 $i \cdot \Phi = a( i\cdot \mathcal D_{0,0}) + b( i \cdot \mathcal D_{0,1}) =c(i \cdot \mathcal D_{1,0}) + d(i \cdot \mathcal D_{1,1})= a\mathcal D_{0,0}+b \mathcal D_{1,0}+c\mathcal D_{0,1}+d \mathcal D_{1,1}$.
\end{center}
Hence, the $\mathbb Z_4$ invariant subspace of $H^0(\mathcal A_\theta^{alg}, {}_{-1}\mathcal A_\theta^{alg \ast})$ is a three dimensional subspace.
\end{proof}
\begin{thm}  \label{thm:hoch04}
$H^0(\mathcal A_\theta^{alg} \rtimes \mathbb Z_4, (\mathcal A_\theta^{alg} \rtimes \mathbb Z_4)^ \ast) \cong \mathbb C^8$.
\end{thm}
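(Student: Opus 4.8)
The plan is to mirror the proof of Theorem~\ref{thm:hoch03} and assemble the four contributions indexed by the elements of $\mathbb Z_4 = \{1, i, -1, -i\}$. By the paracyclic decomposition of \cite[Proposition~4.6]{GJ},
$$H^0(\mathcal A_\theta^{alg} \rtimes \mathbb Z_4, (\mathcal A_\theta^{alg} \rtimes \mathbb Z_4)^\ast) = \bigoplus_{g \in \mathbb Z_4} H^0(\mathcal A_\theta^{alg}, {}_{g}\mathcal A_\theta^{alg \ast})^{\mathbb Z_4},$$
so the theorem reduces to computing the dimension of each of the four $\mathbb Z_4$-invariant summands.

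First I would record the three summands already settled in the excerpt. The untwisted sector $g = 1$ contributes $H^0(\mathcal A_\theta^{alg}, \mathcal A_\theta^{alg \ast})^{\mathbb Z_4} \cong \mathbb C$, generated by the invariant trace $\tau$; the sector $g = i$ contributes $\mathbb C^2$ by the lemma exhibiting the $\mathbb Z_4$-invariance of the cocycles $\mathcal F^i_{0,0}$ and $\mathcal F^i_{0,1}$; and the sector $g = -1$ contributes $\mathbb C^3$ by the lemma identifying the invariant subspace of $H^0(\mathcal A_\theta^{alg}, {}_{-1}\mathcal A_\theta^{alg \ast})$ as the span of $\mathcal D_{0,0}$, $\mathcal D_{1,1}$ and $\mathcal D_{0,1}+\mathcal D_{1,0}$.

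The only remaining piece is the sector $g = -i$. Here I would argue that, since $-i = i^{-1}$ in the abelian group $\mathbb Z_4$, the inversion automorphism $g \mapsto g^{-1}$ induces a $\mathbb Z_4$-equivariant isomorphism of twisted bimodules ${}_{i}\mathcal A_\theta^{alg \ast} \cong {}_{-i}\mathcal A_\theta^{alg \ast}$, hence an isomorphism of the associated Hochschild complexes carrying the $\mathbb Z_4$-action on one onto the $\mathbb Z_4$-action on the other. Consequently $H^0(\mathcal A_\theta^{alg}, {}_{-i}\mathcal A_\theta^{alg \ast})^{\mathbb Z_4} \cong H^0(\mathcal A_\theta^{alg}, {}_{i}\mathcal A_\theta^{alg \ast})^{\mathbb Z_4} \cong \mathbb C^2$, generated by the cocycles $\mathcal F^{-i}_{0,0}$ and $\mathcal F^{-i}_{0,1}$. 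Assembling the four contributions then gives $\mathbb C \oplus \mathbb C^2 \oplus \mathbb C^3 \oplus \mathbb C^2 \cong \mathbb C^8$, as claimed.

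The main obstacle I anticipate is precisely this last step: one must verify that the comparison isomorphism between the $i$- and $(-i)$-twisted cohomologies is genuinely $\mathbb Z_4$-equivariant and not merely an abstract isomorphism of the underlying vector spaces, so that passing to invariants is legitimate. If that equivariance is not immediate from the inversion map, the safe alternative is to repeat the two earlier lemmas verbatim for $g = -i$: first solve the coefficient recursions coming from the kernel of the analogue of the cochain map ${}_{i}\alpha_1$ to show the group is generated by the two coefficients $\varphi_{0,0}$ and $\varphi_{0,1}$, and then check directly that the generator of $\mathbb Z_4$ fixes $\mathcal F^{-i}_{0,0}$ and $\mathcal F^{-i}_{0,1}$, using the analogue of the invariance polynomial $n^2+m^2+2mn$ under the relevant coordinate transformation.
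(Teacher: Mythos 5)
Your proposal follows exactly the route the paper takes: the paracyclic decomposition into the four sectors, the values $\mathbb C$, $\mathbb C^2$, $\mathbb C^3$ for the $g=1$, $g=i$, $g=-1$ sectors established in the preceding lemmas, and the identification of the $-i$ sector with the $i$ sector (the paper likewise asserts $H^\bullet(\mathcal A_\theta^{alg}, {}_{i}\mathcal A_\theta^{alg\ast}) \cong H^\bullet(\mathcal A_\theta^{alg}, {}_{-i}\mathcal A_\theta^{alg\ast})$ and computes only the $g=i$ case). Your added caution about checking that this comparison is $\mathbb Z_4$-equivariant before passing to invariants is a point the paper glosses over, and your fallback of redoing the recursion and invariance check directly for $g=-i$ is the right safeguard, but the overall argument is the same as the paper's.
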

\begin{proof} Using the paracyclic decomposition of the group $H^0(\mathcal A_\theta^{alg} \rtimes \mathbb Z_4, (\mathcal A_\theta^{alg} \rtimes \mathbb Z_4)^ \ast)$  we have the following relation
\begin{center}
$H^0(\mathcal A_\theta^{alg} \rtimes \mathbb Z_4, (\mathcal A_\theta^{alg} \rtimes \mathbb Z_4)^ \ast) = H^0(\mathcal A_\theta, {}_{i}\mathcal A_\theta^{alg \ast})^{\mathbb Z_4} \displaystyle \oplus  H^0(\mathcal A_\theta, {}_{-i}\mathcal A_\theta^{alg \ast})^{\mathbb Z_4} \displaystyle \oplus H^0(\mathcal A_\theta^{alg}, {}_{-1}\mathcal A_\theta^{alg \ast})^{\mathbb Z_4} \displaystyle \oplus  H^0(\mathcal A_\theta, \mathcal A_\theta^{alg \ast})^{\mathbb Z_4} = \mathbb C^2 \displaystyle \oplus \mathbb C^2 \displaystyle \oplus \mathbb C^3 \displaystyle \oplus \mathbb C = \mathbb C^8$
\end{center}
\end{proof}

\centerline{\uline{The case $\Gamma = \mathbb Z_6$.}}
This group is generated in $SL(2,\mathbb Z)$ through its generator $-\omega= \left[
 \begin{array}{cc}
   0 & -1 \\
   1 & 1
 \end{array} \right]\in SL(2,\mathbb Z)$. The generator acts on $\mathcal A_\theta^{alg}$ in the following way
\begin{center}
$U_1 \mapsto U_2, U_2 \mapsto \displaystyle \frac{U_1^{-1}U_2}{\sqrt\lambda}$.
\end{center}
We know that $H^0(\mathcal A_\theta^{alg}, {}_{\omega}\mathcal A_\theta^{alg})$Also we notice that the $-\omega$ twisted Hochschild cohomology group $H^\bullet(\mathcal A_\theta^{alg}, {}_{-\omega}\mathcal A_\theta^{alg})$ and the $-\omega^2$ twisted Hochschild cohomology group $H^\bullet(\mathcal A_\theta^{alg}, {}_{-\omega^2}\mathcal A_\theta^{alg})$ are isomorphic. Hence we consider the case $g=-\omega$ for our computational purpose. \par
For $g=-\omega$ the following is the Hochschild cohomology complex for the Connes resolution.
\begin{center}
${}_{-\omega}\mathcal A_\theta^{alg \ast} \xrightarrow{{}_{-\omega}\alpha_1}{}_{-\omega}\mathcal A_\theta^{alg \ast} \oplus {}_{-\omega}\mathcal A_\theta^{alg \ast} \xrightarrow{{}_{-\omega}\alpha_2} {}_{-\omega}\mathcal A_\theta^{alg \ast} \rightarrow 0$,
\end{center}
where the cochain maps are as follows:

\begin{center}
${}_{-\omega}\alpha_1(\varphi) = (U_2 \varphi - \varphi U_1, \displaystyle\frac{U_1^{-1}U_2 \varphi}{\sqrt{\lambda}} - \varphi
 U_2)$ ; ${}_{-\omega}\alpha_2(\varphi_1, \varphi_2) = \displaystyle\frac{U_1^{-1}U_2 \varphi_1}{\sqrt{\lambda}} - \lambda \varphi_1 U_2 - \lambda U_2 \varphi_2 + \varphi_2 U_1$.
\end{center}
Hence the $-\omega$ twisted Hochschild cohomology group $H^0(\mathcal A_\theta^{alg}, {}_{-\omega}\mathcal A_\theta^{alg \ast})$ is the group $ker({}_{-\omega}\alpha_1)$.
The group $ker({}_{-\omega}\alpha_1)$ is the set of all entries $\varphi \in \mathcal A_\theta^{alg \ast}$ such that the following relations hold:
\begin{center}
 $U_2 \varphi = \varphi U_1$ and $\displaystyle \frac{U_1^{-1}U_2}{\sqrt\lambda} \varphi = \varphi U_2$.
\end{center}
Hence, we deduce that $\varphi_{{n-1},m}\lambda^{m} = \lambda^{n} \varphi_{n,m-1}$ and $\varphi_{n,{m-1}} = \lambda^{n+ \frac{1}{2}} \varphi_{n+1,m-1}$.
\begin{lemma} If $\varphi$ is a cocycle of $H^0(\mathcal A_\theta^{alg}, {}_{-\omega}\mathcal A_\theta^{alg \ast})$ then,
$\varphi$ is generated by the coefficient $\varphi_{0,0}$. 
And for all $(n,m) \in \mathbb Z^2$, $\varphi_{n,m} = \lambda^{-\frac{m^2+n^2}{2}} \varphi_{0,0}$.
 \end{lemma}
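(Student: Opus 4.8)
The plan is to solve the two coefficient recursions explicitly and thereby show that every cocycle is determined by the single value $\varphi_{0,0}$. Rewriting the two relations recorded just above the statement in the convenient forms
\[
\varphi_{n,m-1} = \lambda^{m-n}\,\varphi_{n-1,m}\qquad\text{and}\qquad \varphi_{n+1,\,m-1}=\lambda^{-(n+\frac12)}\,\varphi_{n,m-1},
\]
I would exploit them in two separate stages: the second relation to collapse the first index, and the first relation to collapse the second.

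First I would fix the second index $k$ and iterate the second relation. Shifting the index, it reads $\varphi_{n,k}=\lambda^{-(n-\frac12)}\varphi_{n-1,k}$, so telescoping from $0$ up to $n$ the exponents accumulate as $\sum_{j=1}^{n}(j-\tfrac12)=\tfrac{n^2}{2}$, whence $\varphi_{n,k}=\lambda^{-n^2/2}\,\varphi_{0,k}$ for every $n$ and every $k$ (the same computation runs unchanged for negative $n$). This isolates the factor $\lambda^{-n^2/2}$ and reduces the whole problem to understanding the single column $\{\varphi_{0,m}\}_m$.

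Next I would feed this back into the first relation specialized at $n=0$. That relation gives $\varphi_{0,m-1}=\lambda^{m}\varphi_{-1,m}$, while the first stage supplies $\varphi_{-1,m}=\lambda^{-1/2}\varphi_{0,m}$; together these yield the pure recursion $\varphi_{0,m}=\lambda^{\frac12-m}\,\varphi_{0,m-1}$ on the column. Telescoping once more, the exponents sum to $\sum_{j=1}^{m}(\tfrac12-j)=-\tfrac{m^2}{2}$, giving $\varphi_{0,m}=\lambda^{-m^2/2}\varphi_{0,0}$. Combining the two stages produces $\varphi_{n,m}=\lambda^{-(n^2+m^2)/2}\varphi_{0,0}$, exactly as claimed, so the cocycle is pinned down by $\varphi_{0,0}$ alone.

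The one genuine point to watch is consistency: there are two independent constraints but only one free parameter, so one must confirm that the relations do not over-determine the coefficients and force $\varphi_{0,0}=0$. I would therefore close the argument by checking directly that the closed form $\varphi_{n,m}=\lambda^{-(n^2+m^2)/2}\varphi_{0,0}$ satisfies both relations identically, which is a one-line verification in each case: for the second relation only the first index moves and $(n+1)^2-n^2=2n+1$ reproduces the prefactor $\lambda^{-(n+1/2)}$, while for the first relation both indices shift by one and $[(n-1)^2+m^2]-[n^2+(m-1)^2]=2(m-n)$ reproduces $\lambda^{m-n}$. This confirms that the solution space is precisely one-dimensional, generated by $\varphi_{0,0}$.
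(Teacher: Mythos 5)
Your proof is correct and follows essentially the same route as the paper: use the relation $\varphi_{n,m-1}=\lambda^{n+\frac12}\varphi_{n+1,m-1}$ to collapse the first index onto the column $\varphi_{0,\bullet}$, then combine both relations to get a one-step recursion down that column, yielding $\varphi_{n,m}=\lambda^{-(n^2+m^2)/2}\varphi_{0,0}$. Your closing consistency check that the closed form satisfies both relations (so the solution space is genuinely one-dimensional rather than forced to vanish) is a worthwhile addition that the paper's own argument omits.
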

\begin{proof}
We observe that all the coefficients $\varphi_{\bullet, r}$ are generated by $\varphi_{0,r}$. Furthermore the relation $\varphi_{{n-1},m}\lambda^{m} = \lambda^{n} \varphi_{n,m-1}$ gives us that $\varphi_{0,\bullet}$ are generated by $\varphi_{0,0}$. Hence the $-\omega$ twisted zeroth Hochschild cohomology group $H^0(\mathcal A_\theta^{alg}, {}_{-\omega}\mathcal A_\theta^{alg \ast})$ is a one dimensional group generated by the coefficient $\varphi_{0,0}$. \par
The second relation gives us that 
$$\varphi_{n,\bullet} = \lambda^{-\frac{n(n-1)}{2} + \frac{(n-1)}{2}} \varphi_{0,\bullet}=\lambda^{\frac{-n^2}{2}} \varphi_{0,\bullet}.$$
Similarly, using both the relations we deduce that $\varphi_{\bullet, m} = \lambda^{-(m-1)+\frac{1}{2}} \varphi_{\bullet, m-1}.$ Hence we have 
$$\varphi_{0,m} = \lambda^{-\frac{m(m-1)}{2} + \frac{(m-1)}{2}} \varphi_{0,0}=\lambda^{\frac{-m^2}{2}} \varphi_{0,0}.$$
Therefore, $\varphi_{n,m} =\lambda^{\frac{-n^2}{2}} \varphi_{0,m}=\lambda^{\frac{-n^2}{2}}\lambda^{\frac{-m^2}{2}} \varphi_{0,0}=\lambda^{-\frac{m^2+n^2}{2}} \varphi_{0,0}$
\end{proof}

\begin{lemma}
$H^0(\mathcal A_\theta^{alg} , {}_{-\omega}\mathcal A_\theta^{alg \ast})^{\mathbb Z_6} \cong \mathbb C$.

\end{lemma}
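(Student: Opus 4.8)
The plan is to leverage the preceding lemma, which shows that $H^0(\mathcal A_\theta^{alg}, {}_{-\omega}\mathcal A_\theta^{alg \ast})$ is one-dimensional, generated by the cocycle whose coefficients are $\varphi_{n,m} = \lambda^{-\frac{m^2+n^2}{2}} \varphi_{0,0}$. Because $\mathbb Z_6$ acts linearly on this one-dimensional space, its fixed subspace is either $0$ or all of $\mathbb C$, and it is $\mathbb C$ exactly when the generator $-\omega$ leaves the generating cocycle invariant. The whole argument therefore collapses to a single invariance check, carried out in the same style as the $\mathbb Z_3$ and $\mathbb Z_4$ cases above.

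First I would compute the image of a typical monomial $\varphi_{n,m} U_1^n U_2^m$ under $-\omega$. Using $U_1 \mapsto U_2$ and $U_2 \mapsto \frac{U_1^{-1}U_2}{\sqrt\lambda}$, this equals $\varphi_{n,m}\,\lambda^{-m/2}\,U_2^n (U_1^{-1}U_2)^m$. The main computational inputs are the normal-ordering identity $(U_1^{-1}U_2)^m = \lambda^{-\frac{m(m-1)}{2}} U_1^{-m}U_2^m$ (proved by induction from $U_2 U_1 = \lambda U_1 U_2$) and the commutation rule $U_2^n U_1^{-m} = \lambda^{-nm} U_1^{-m}U_2^n$. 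Applying these and collecting the half-integer and triangular exponents, the image reduces to the normal form $\varphi_{n,m}\,\lambda^{-\frac{m^2}{2}-nm}\,U_1^{-m}U_2^{n+m}$.

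Next I would substitute $\varphi_{n,m} = \lambda^{-\frac{m^2+n^2}{2}}\varphi_{0,0}$ and verify that the resulting scalar is exactly the coefficient $\varphi_{-m,\,n+m}$ predicted by the explicit formula. Concretely, the image carries the factor $\lambda^{-\frac{2m^2+n^2+2nm}{2}}\varphi_{0,0}$, while $\varphi_{-m,n+m} = \lambda^{-\frac{(n+m)^2+m^2}{2}}\varphi_{0,0} = \lambda^{-\frac{n^2+2nm+2m^2}{2}}\varphi_{0,0}$, and these coincide. Hence $-\omega \cdot \varphi_{n,m} U_1^n U_2^m = \varphi_{-m,\,n+m} U_1^{-m} U_2^{n+m}$ term by term. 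Since the induced index map $(n,m) \mapsto (-m, n+m)$ is the matrix $-\omega \in SL(2,\mathbb Z)$ and hence a bijection of $\mathbb Z^2$, the sum defining the cocycle is merely permuted, so the generator is fixed and $H^0(\mathcal A_\theta^{alg}, {}_{-\omega}\mathcal A_\theta^{alg \ast})^{\mathbb Z_6} \cong \mathbb C$.

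The only delicate point is the bookkeeping of the $\lambda$-powers: the quadratic form $m^2+n^2$ is not itself invariant under $(n,m)\mapsto(-m,n+m)$, so the match hinges on the scalar accrued during normal-ordering precisely absorbing the discrepancy. I would thus track the exponent $-m/2$ coming from $\frac{1}{\sqrt\lambda}$ and the exponent $-\frac{m(m-1)}{2}$ coming from reordering with care, since an arithmetic slip there is essentially the only way the computation could fail.
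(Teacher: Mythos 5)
Your proposal is correct and follows essentially the same route as the paper: both reduce the claim to checking that the generator $\mathcal G^{-\omega}_{0,0}$ is fixed by $-\omega$, and both carry out the identical normal-ordering computation showing $-\omega\cdot\varphi_{n,m}U_1^nU_2^m=\varphi_{-m,n+m}U_1^{-m}U_2^{n+m}$ via the exponent identity $\tfrac{n^2+m^2}{2}+\tfrac{m^2}{2}+nm=\tfrac{m^2+(n+m)^2}{2}$. Your added remarks (that the fixed subspace of a one-dimensional space is $0$ or $\mathbb C$, and that $(n,m)\mapsto(-m,n+m)$ permutes $\mathbb Z^2$) only make explicit what the paper leaves implicit.
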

\begin{proof}
We need to show that the lone cocycle of the group $H^0(\mathcal A_\theta^{alg}, {}_{-\omega}\mathcal A_\theta^{alg \ast})$ is invariant under the action of $-\omega$. The cocycle $\mathcal G^{-\omega}_{0,0}$ generated by $\varphi_{0,0}$ as described in the lemma above has a typical entry of the form $\varphi_{n,m} U_1^n U_2^m$. When $-\omega$ acts on this entry we have the following 
\begin{center}
$-\omega \cdot \varphi_{n,m} U_1^n U_2^m = \varphi_{n,m} U_2^n (\displaystyle \frac{U_1^{-1} U_2}{\sqrt \lambda})^m=\varphi_{n,m} \lambda^{-\frac{m}{2}}U_2^n (U_1^{-1} U_2)^m=\varphi_{n,m} \lambda^{-\frac{m}{2}} \lambda^{-\frac{m(m-1)}{2}}U_2^nU_1^{-m} U_2^m = \varphi_{n,m} \lambda^{-\frac{m^2}{2}} U_2^nU_1^{-m} U_2^m = \varphi_{n,m} \lambda^{-\frac{m^2}{2}} \lambda^{-nm} U_1^{-m} U_2^{n+m}=\varphi_{0,0} \lambda^{-\frac{n^2+m^2}{2}}\lambda^{-\frac{m^2}{2}} \lambda^{-nm} U_1^{-m} U_2^{n+m}=\varphi_{0,0} \lambda^{-\frac{m^2+(n+m)^2}{2}}  U_1^{-m} U_2^{n+m}=\varphi_{{-m},n+m} U_1^{-m} U_2^{n+m}$.
\end{center}
Hence we see that the cocycle $\mathcal G^{-\omega}_{0,0}$ is invariant under the action of $\mathbb Z_6$.
\end{proof}
\begin{lemma}
$H^0(\mathcal A_\theta^{alg} , {}_{\omega}\mathcal A_\theta^{alg \ast})^{\mathbb Z_6} \cong \mathbb C^2$.
\end{lemma}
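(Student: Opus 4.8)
The plan is to avoid recomputing the twisted cohomology from scratch and instead piggyback on the two lemmas already proved for the $\omega$-twisted sector, adding only one new symmetry. The key group-theoretic observation is that the $\mathbb Z_6$ generator $-\omega$ satisfies $(-\omega)^4=\omega$ and $(-\omega)^3=-1$, so that $\mathbb Z_6$ is the internal direct product $\langle\omega\rangle\times\langle-1\rangle$, where $-1$ is the central element of $SL(2,\mathbb Z)$. Consequently a class in $H^0(\mathcal A_\theta^{alg},{}_{\omega}\mathcal A_\theta^{alg\ast})$ is $\mathbb Z_6$-invariant if and only if it is simultaneously $\omega$-invariant and $-1$-invariant. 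Since the matrix $\omega$ acts by exactly the same formulas in the $\mathbb Z_6$ embedding as in the $\mathbb Z_3$ embedding, the earlier lemma showing $H^0(\mathcal A_\theta^{alg},{}_{\omega}\mathcal A_\theta^{alg\ast})^{\mathbb Z_3}\cong\mathbb C^3$ tells us the \emph{entire} three-dimensional space $\mathrm{span}(\mathcal E^\omega_{0,0},\mathcal E^\omega_{0,1},\mathcal E^\omega_{0,-1})$ is already $\omega$-invariant. Thus the problem collapses to identifying the $\langle-1\rangle$-fixed subspace.

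First I would record the $-1$ action. Because $-1$ is diagonal, both cocycle phases $e^{\pi i g_{1,1}g_{2,1}\theta}$ and $e^{\pi i g_{1,2}g_{2,2}\theta}$ are trivial, so $U_1\mapsto U_1^{-1}$ and $U_2\mapsto U_2^{-1}$ with no $\lambda$-correction. Hence $-1\cdot\varphi_{n,m}U_1^nU_2^m=\varphi_{n,m}U_1^{-n}U_2^{-m}$, and reading off coefficients shows that $-1$-invariance of $\varphi$ is precisely the single condition $\varphi_{n,m}=\varphi_{-n,-m}$ for all $(n,m)$.

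Next I would plug in the coefficient formulas from the generating lemma. Two structural facts do all the work: first, the quadratic form $P(n,m)=n^2+m^2+4mn$ occurring in every exponent is invariant under the central symmetry $(n,m)\mapsto(-n,-m)$; second, the residue $m-n\bmod 3$ that labels the three cocycles negates under this symmetry, so the class $m-n\equiv0$ is preserved while $m-n\equiv 1$ and $m-n\equiv -1$ are interchanged. The first fact forces $\mathcal E^\omega_{0,0}$, supported on $m-n\equiv0$, to be $-1$-fixed. The two facts together show that $-1$ carries the coefficient $\lambda^{(P(n,m)-1)/6}$ that $\mathcal E^\omega_{0,1}$ assigns to $(n,m)$ onto the monomial $(-n,-m)$, where $\mathcal E^\omega_{0,-1}$ assigns $\lambda^{(P(-n,-m)-1)/6}$; since $P(-n,-m)=P(n,m)$ these agree exactly, so $-1$ swaps $\mathcal E^\omega_{0,1}\leftrightarrow\mathcal E^\omega_{0,-1}$. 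Therefore the $\langle-1\rangle$-fixed subspace is spanned by $\mathcal E^\omega_{0,0}$ and the symmetric sum $\mathcal E^\omega_{0,1}+\mathcal E^\omega_{0,-1}$ (the antisymmetric difference maps to its negative), yielding $H^0(\mathcal A_\theta^{alg},{}_{\omega}\mathcal A_\theta^{alg\ast})^{\mathbb Z_6}\cong\mathbb C^2$, consistent with the two columns $S\mathcal E^\omega_{0,0}$ and $S(\mathcal E^\omega_{0,1}+\mathcal E^\omega_{0,-1})$ in the $\mathbb Z_6$ index table.

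The only delicate point, and the step I expect to require care, is verifying that the swap $\mathcal E^\omega_{0,1}\mapsto\mathcal E^\omega_{0,-1}$ holds \emph{on the nose} with no residual scalar; this is exactly the matching of the two $\lambda$-exponents above and reduces entirely to the evenness $P(-n,-m)=P(n,m)$. Once this parity is isolated, the remaining $\lambda$-power bookkeeping is routine, and the reduction of $\mathbb Z_6$-invariance to $-1$-invariance is the conceptual heart of the argument.
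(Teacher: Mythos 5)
Your argument is correct, and it reaches the same two-dimensional invariant subspace, spanned by $\mathcal E^\omega_{0,0}$ and $\mathcal E^\omega_{0,1}+\mathcal E^\omega_{0,-1}$, that the paper finds; but you get there by a genuinely different and cleaner route. The paper computes the action of the $\mathbb Z_6$ generator $-\omega$ directly on each of the three generators, which forces it to track the phases coming from $U_2\mapsto U_1^{-1}U_2/\sqrt\lambda$ through several lines of $\lambda$-exponent bookkeeping before concluding $-\omega\cdot\mathcal E^\omega_{0,0}=\mathcal E^\omega_{0,0}$ and $-\omega\cdot\mathcal E^\omega_{0,\pm1}=\mathcal E^\omega_{0,\mp1}$. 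You instead exploit the factorization $\mathbb Z_6=\langle\omega\rangle\times\langle-1\rangle$ (valid since $(-\omega)^4=\omega$ and $(-\omega)^3=-1$), observe that the earlier $\mathbb Z_3$ lemma already makes the whole space $\langle\omega\rangle$-invariant, and reduce everything to the central involution $-1$, whose action $U_j\mapsto U_j^{-1}$ carries no $\lambda$-corrections at all. The two structural inputs you isolate, the evenness $P(-n,-m)=P(n,m)$ of the quadratic form and the negation of the residue $m-n\bmod 3$, are exactly what make the paper's longer computation come out the way it does, so your proof can be read as an explanation of the paper's answer rather than just a re-derivation. What you give up is self-containedness: your argument leans on the full strength of the $\mathbb Z_3$ lemma (that all three generators are $\omega$-fixed with no scalar), and implicitly on the fact that the paracyclic $\Gamma$-action on $H^0(\mathcal A_\theta^{alg},{}_\omega\mathcal A_\theta^{alg\ast})$ is a genuine group action so that invariance under the two commuting generators suffices; both points are true and consistent with the conventions used throughout the paper, so there is no gap.
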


\begin{proof}
The actions of $-\omega$ on the twisted zero Hochschild cocycles $\mathcal E^\omega_{0,0}$, $\mathcal E^\omega_{0,1}$ and $\mathcal E^\omega_{0,-1}$ are described below. \par
 We consider $\mathcal E^\omega_{0,0}$, a typical entry $\varphi_{n,m} U_1^n U_2^m$ when acted by $-\omega$ transforms to
\begin{center} $\varphi_{n,m} U_2^n (\displaystyle \frac{U_1^{-1}U_2}{\sqrt \lambda})^m=\varphi_{n,m} \lambda^{-\frac{m}{2}}U_2^n (U_1^{-1} U_2)^m=\varphi_{n,m} \lambda^{-\frac{m}{2}} \lambda^{-\frac{m(m-1)}{2}}U_2^nU_1^{-m} U_2^m = \varphi_{n,m} \lambda^{-\frac{m^2}{2}} U_2^nU_1^{-m} U_2^m =  \varphi_{n,m} \lambda^{-\frac{m^2}{2}} \lambda^{-nm} U_1^{-m} U_2^{n+m}=\varphi_{0,0} \lambda^{\frac{n^2+m^2+4nm}{6}} \lambda^{-\frac{m^2}{2}} \lambda^{-nm} U_1^{-m} U_2^{n+m}=\varphi_{0,0} \lambda^{\frac{n^2-2m^2-2nm}{6}}  \lambda^{-nm} U_1^{-m} U_2^{n+m}=\varphi_{-m,n+m} U_1^{-m} U_2^{n+m}$.
\end{center}
Since for the cocycle $\mathcal E^\omega_{0,0}$, $(n+m)-(-m) - (m-n) = m+2n = m-n+3n \equiv m-n \pmod{3} \equiv 0 \pmod{3}$. Hence, $-\omega \cdot \varphi_{n,m} U_1^n U_2^m $ is an entry in $\mathcal E^\omega_{0,0}$, so, $-\omega \cdot \mathcal E^\omega_{0,0} = \mathcal E^\omega_{0,0}$. \par
Similarly for the cocycle $\mathcal E^\omega_{0,1}$we have
\begin{center}
$\varphi_{n,m} U_2^n (\displaystyle \frac{U_1^{-1}U_2}{\sqrt \lambda})^m=\varphi_{n,m} \lambda^{-\frac{m}{2}}U_2^n (U_1^{-1} U_2)^m=\varphi_{n,m} \lambda^{-\frac{m}{2}} \lambda^{-\frac{m(m-1)}{2}}U_2^nU_1^{-m} U_2^m = \varphi_{n,m} \lambda^{-\frac{m^2}{2}} U_2^nU_1^{-m} U_2^m =  \varphi_{n,m} \lambda^{-\frac{m^2}{2}} \lambda^{-nm} U_1^{-m} U_2^{n+m}=\varphi_{0,1} \lambda^{\frac{n^2+m^2+4nm-1}{6}} \lambda^{-\frac{m^2}{2}} \lambda^{-nm} U_1^{-m} U_2^{n+m}=\varphi_{0,1} \lambda^{\frac{n^2-2m^2-2nm-1}{6}}  \lambda^{-nm} U_1^{-m} U_2^{n+m}=\varphi_{-m,n+m} U_1^{-m} U_2^{n+m}$.
\end{center}
Unlike the previous case, for the cocycle $\mathcal E^\omega_{0,1}$ we have 
\begin{center}
$(n+m)-(-m) - (m-n) = m+2n = m-n+3n \equiv m-n \pmod{3} \equiv 1 \pmod{3}$.
\end{center}
\begin{center}
Hence, $-\omega \cdot \mathcal E^\omega_{0,1} = \mathcal E^\omega_{0,-1}$. 
\end{center}
Similarly, it is trivial to figure that  $-\omega \cdot \mathcal E^\omega_{0,-1} = \mathcal E^\omega_{0,1}$. Hence, the two dimensional $-\omega$ invariant subspace of $H^0(\mathcal A_\theta^{alg},{}_{\omega} \mathcal A_\theta^{alg \ast})$ is generated by $\mathcal E^\omega_{0,0}$ and $\mathcal E^\omega_{0,1} + \mathcal E^\omega_{0,-1}$.

\end{proof}

\begin{lemma}
$H^0(\mathcal A_\theta^{alg} , {}_{-1}\mathcal A_\theta^{alg \ast})^{\mathbb Z_6} \cong \mathbb C^2$.
\end{lemma}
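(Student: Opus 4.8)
The plan is to reduce the computation to understanding how the generator $-\omega$ of $\mathbb{Z}_6$ permutes the four basis cocycles of the $-1$-twisted group, and then to read off the fixed subspace. First I would recall from the $\mathbb{Z}_4$ analysis that $H^0(\mathcal A_\theta^{alg}, {}_{-1}\mathcal A_\theta^{alg \ast})$ is four-dimensional with basis $\mathcal{D}_{0,0},\mathcal{D}_{0,1},\mathcal{D}_{1,0},\mathcal{D}_{1,1}$ indexed by the parity class of $(n,m)$, where the coefficients satisfy $\varphi_{2n,2m}=\lambda^{2nm}\varphi_{0,0}$, $\varphi_{2n,2m+1}=\lambda^{2nm+n}\varphi_{0,1}$, $\varphi_{2n+1,2m}=\lambda^{2nm+m}\varphi_{1,0}$ and $\varphi_{2n+1,2m+1}=\lambda^{2nm+n+m}\varphi_{1,1}$. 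Since $(-\omega)^3=-1$ is central, $-\omega$ preserves the $-1$-twisted summand, so the $\mathbb{Z}_6$-invariants are exactly the $-\omega$-fixed vectors of this four-dimensional space.

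Next I would compute the action of $-\omega$ on a single lattice entry. Reusing the identity already established for the $\mathcal{E}^\omega$ and $\mathcal{G}^{-\omega}$ cocycles, namely $-\omega\cdot(U_1^n U_2^m)=\lambda^{-\frac{m^2}{2}-nm}\,U_1^{-m}U_2^{n+m}$, I obtain that $-\omega$ sends the lattice point $(n,m)$ to $(-m,n+m)$. Reading off parities, this map fixes the class of $\mathcal{D}_{0,0}$ and cyclically permutes the other three classes as $\mathcal{D}_{0,1}\mapsto\mathcal{D}_{1,1}\mapsto\mathcal{D}_{1,0}\mapsto\mathcal{D}_{0,1}$.

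The heart of the argument is to pin down the scalar accompanying each of these four moves, by comparing the exponent of $\lambda$ produced by the action with the exponent dictated by the coefficient relation at the image lattice point. I expect to find that $\mathcal{D}_{0,0}$ is fixed with scalar $1$, while the three-cycle carries scalars whose product around the loop equals $1$ (consistent with $(-\omega)^3=-1$ acting trivially on its own twisted cohomology); concretely $-\omega\cdot\mathcal{D}_{0,1}=\sqrt\lambda\,\mathcal{D}_{1,1}$, $-\omega\cdot\mathcal{D}_{1,1}=\tfrac{1}{\sqrt\lambda}\mathcal{D}_{1,0}$ and $-\omega\cdot\mathcal{D}_{1,0}=\mathcal{D}_{0,1}$. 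The main obstacle is precisely this bookkeeping of the half-integer powers of $\lambda$: the factor $\tfrac{1}{\sqrt\lambda}$ in the action of $-\omega$ on $U_2$ forces one to carry $\lambda^{1/2}$'s through every step, and a single fractional slip would spoil the consistency check that the scalars multiply to $1$.

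Finally I would solve the fixed-point equations. The class $\mathcal{D}_{0,0}$ contributes one invariant dimension. Writing a general element of the permuted triple as $\Phi=b\mathcal{D}_{0,1}+c\mathcal{D}_{1,0}+d\mathcal{D}_{1,1}$ and imposing $-\omega\cdot\Phi=\Phi$ yields $c=b$ and $d=b\sqrt\lambda$, so the triple contributes exactly one further invariant, namely $\mathcal{D}_{0,1}+\mathcal{D}_{1,0}+\sqrt\lambda\,\mathcal{D}_{1,1}$ (up to the normalization conventions for $\mathcal{D}_{0,1},\mathcal{D}_{1,0}$). Together these span a two-dimensional fixed subspace, giving $H^0(\mathcal A_\theta^{alg}, {}_{-1}\mathcal A_\theta^{alg \ast})^{\mathbb{Z}_6}\cong\mathbb{C}^2$.
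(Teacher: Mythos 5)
Your proposal follows the same route as the paper: identify the four generators $\mathcal D_{0,0},\mathcal D_{0,1},\mathcal D_{1,0},\mathcal D_{1,1}$ of $H^0(\mathcal A_\theta^{alg},{}_{-1}\mathcal A_\theta^{alg\ast})$, compute how $-\omega$ fixes $\mathcal D_{0,0}$ and cyclically permutes the other three with scalars, and read off a two-dimensional fixed subspace. The only substantive difference is in the scalars on the three-cycle, and here your values are the ones that check out: carrying the exponent bookkeeping through with $\varphi_{2n+1,2m+1}=\lambda^{2nm+n+m}\varphi_{1,1}$ and $\varphi_{2n+1,2m}=\lambda^{2nm+m}\varphi_{1,0}$ gives $-\omega\cdot\mathcal D_{1,1}=\lambda^{-1/2}\mathcal D_{1,0}$ as you predict, whereas the paper records $-\omega\cdot\mathcal D_{1,1}=\lambda\,\mathcal D_{1,0}$; with the paper's scalars the product around the loop is $\lambda^{3/2}\neq 1$, which is incompatible with $(-\omega)^3=-1$ acting trivially on the $-1$-twisted summand and would in fact leave \emph{no} invariant vector in the permuted triple. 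Your consistency check is therefore not just a sanity test but catches a genuine arithmetic slip, and your invariant vector $\mathcal D_{0,1}+\mathcal D_{1,0}+\sqrt\lambda\,\mathcal D_{1,1}$ is the correct one (the paper's displayed combination repeats $\mathcal D_{1,0}$ and is not fixed even under its own stated action). The one thing still owed in a final write-up is the explicit exponent computation establishing the three scalars, which you describe but do not carry out; once that is included the argument is complete and yields $\mathbb C^2$ as claimed.
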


\begin{proof}
The four $-1$ twisted zero Hochschild cohomology cocycles are $\mathcal D_{0,0}$, $\mathcal D_{0,1}$, $\mathcal D_{1,0}$ and $\mathcal D_{1,1}$. Consider the action of $-\omega$ on $\mathcal D_{0,0}$,
\begin{center}
 $-1 \cdot \varphi_{2n,2m} U_1^{2n} U_2^{2m} = \varphi_{2n,2m} U_2^{2n} (\displaystyle \frac{U_1^{-1}U_2}{\sqrt \lambda})^{2m}=\varphi_{2n,2m} U_2^{2n} \lambda^{-m}(U_1^{-1}U_2)^{2m}=\varphi_{2n,2m} \lambda^{-m} \lambda^{-\frac{2m(2m-1)}{2}}U_2^{2n}U_1^{-2m} U_2^{2m}=\varphi_{2n,2m} \lambda^{-2m^2} \lambda^{-4nm} U_1^{-2m} U_2^{2n+2m}=\varphi_{0,0} \lambda^{2nm} \lambda^{-2m^2} \lambda^{-4nm} U_1^{-2m} U_2^{2n+2m}=\varphi_{0,0} \lambda^{-2m^2-2nm} U_1^{-2m} U_2^{2n+2m}= \varphi_{{-2m},{2n+2m}} U_1^{-2m} U_2^{2n+2m}$. 
\end{center}
Hence, we conclude that $\mathcal D_{0,0}$ is invariant under the action of $-\omega$.\par
Similar computations for $\mathcal D_{0,1}$, $\mathcal D_{1,0}$ and $\mathcal D_{1,1}$ yield the following
\begin{center}
$-\omega \cdot \mathcal D_{0,1} = \sqrt \lambda \mathcal D_{1,1}$ ;
$-\omega \cdot \mathcal D_{1,0} = \mathcal D_{0,1}$ ;
$-\omega \cdot \mathcal D_{1,1} = \lambda \mathcal D_{1,0}$.
\end{center}
Hence the $-\omega$ invariant sub-space of $H^0(\mathcal A_\theta^{alg}, {}_{-1}\mathcal A_\theta^{alg \ast})$ is $2$ dimensional and is generated by $\mathcal D_{0,0}$ and $\mathcal D_{1,0}+\lambda \sqrt{\lambda}\mathcal D_{1,0}+\sqrt\lambda \mathcal D_{1,1}$.
\end{proof}

\begin{thm}  \label{thm:hoch06}
$H^0(\mathcal A_\theta^{alg} \rtimes \mathbb Z_6, (\mathcal A_\theta^{alg} \rtimes \mathbb Z_6)^ \ast) \cong \mathbb C^9$.
\end{thm}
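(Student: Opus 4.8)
The plan is to apply the paracyclic decomposition
$$H^0(\mathcal A_\theta^{alg} \rtimes \mathbb Z_6, (\mathcal A_\theta^{alg} \rtimes \mathbb Z_6)^\ast) = \bigoplus_{g \in \mathbb Z_6} H^0(\mathcal A_\theta^{alg}, {}_g\mathcal A_\theta^{alg \ast})^{\mathbb Z_6}$$
and to evaluate each of the six summands. First I would enumerate the group elements. Since $\mathbb Z_6$ is generated by $-\omega$ and $\omega$ has order three, the elements are $1, \omega, \omega^2, -1, -\omega, -\omega^2$, with $(-\omega)^2 = \omega^2$, $(-\omega)^3 = -1$, $(-\omega)^4 = \omega$ and $(-\omega)^5 = -\omega^2$. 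Thus the decomposition splits into three pairs of twists sharing a common structure, together with the identity twist.

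Next I would assemble the dimensions. Three of the summands are supplied directly by the preceding lemmas: $H^0(\mathcal A_\theta^{alg}, {}_{-\omega}\mathcal A_\theta^{alg \ast})^{\mathbb Z_6} \cong \mathbb C$ (generated by $\mathcal G^{-\omega}_{0,0}$), $H^0(\mathcal A_\theta^{alg}, {}_{\omega}\mathcal A_\theta^{alg \ast})^{\mathbb Z_6} \cong \mathbb C^2$ (generated by $\mathcal E^\omega_{0,0}$ and $\mathcal E^\omega_{0,1}+\mathcal E^\omega_{0,-1}$), and $H^0(\mathcal A_\theta^{alg}, {}_{-1}\mathcal A_\theta^{alg \ast})^{\mathbb Z_6} \cong \mathbb C^2$ (generated by $\mathcal D_{0,0}$ and $\mathcal D_{1,0}+\lambda\sqrt\lambda\,\mathcal D_{1,0}+\sqrt\lambda\,\mathcal D_{1,1}$). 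For $g=1$ the group $H^0(\mathcal A_\theta^{alg}, \mathcal A_\theta^{alg \ast})$ is one-dimensional, generated by $\tau$, whose invariance under the $\mathbb Z_6$ action follows by the same push-to-bar, pull-back-via-chain-homotopy comparison already used for the $\mathbb Z_3$ and $\mathbb Z_4$ cases; this contributes $\mathbb C$. The two remaining summands are then read off from the twisted-bimodule isomorphisms noted above: ${}_{-\omega^2}\mathcal A_\theta^{alg \ast}$ is isomorphic to ${}_{-\omega}\mathcal A_\theta^{alg \ast}$, giving $\mathbb C$ (generator $\mathcal G^{-\omega^2}_{0,0}$), and ${}_{\omega^2}\mathcal A_\theta^{alg \ast}$ is isomorphic to ${}_{\omega}\mathcal A_\theta^{alg \ast}$, giving $\mathbb C^2$ (generators $\mathcal E^{\omega^2}_{0,0}$ and $\mathcal E^{\omega^2}_{0,1}+\mathcal E^{\omega^2}_{0,-1}$); these are precisely the cocycles heading the columns of Table~(c).

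Summing the six contributions gives
$$1 + 2 + 2 + 2 + 1 + 1 = 9,$$
whence $H^0(\mathcal A_\theta^{alg} \rtimes \mathbb Z_6, (\mathcal A_\theta^{alg} \rtimes \mathbb Z_6)^\ast) \cong \mathbb C^9$. I expect the theorem itself to be a pure bookkeeping step; the genuine work lives in the lemmas. The one point that demands care, and the likely source of error, is the passage from $\mathbb Z_3$-invariance to $\mathbb Z_6$-invariance for the $\omega$- and $\omega^2$-twisted summands: over $\mathbb Z_3$ these are three-dimensional, spanned by $\mathcal E^\omega_{0,0}, \mathcal E^\omega_{0,1}, \mathcal E^\omega_{0,-1}$, but imposing invariance under the additional generator $-\omega$, which fixes $\mathcal E^\omega_{0,0}$ and interchanges $\mathcal E^\omega_{0,1} \leftrightarrow \mathcal E^\omega_{0,-1}$, collapses each to the two-dimensional fixed space spanned by $\mathcal E^\omega_{0,0}$ and the symmetric sum $\mathcal E^\omega_{0,1}+\mathcal E^\omega_{0,-1}$. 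Once this swap, and the analogous cyclic permutation of the $\mathcal D$-cocycles under $-\omega$, has been verified, the dimension count is immediate.
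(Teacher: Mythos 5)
Your proposal is correct and follows essentially the same route as the paper: the paracyclic decomposition over the six group elements $1,\omega,\omega^2,-1,-\omega,-\omega^2$, the preceding lemmas supplying $\mathbb C$, $\mathbb C^2$, $\mathbb C^2$ for the $-\omega$-, $\omega$-, and $-1$-twisted summands, the isomorphisms identifying the $-\omega^2$- and $\omega^2$-twisted summands with their counterparts, and the invariance of $\tau$ for the identity twist, yielding $1+1+2+2+2+1=9$. Your added care about the collapse from the three-dimensional $\mathbb Z_3$-invariant space to the two-dimensional $\mathbb Z_6$-invariant space under the swap $\mathcal E^\omega_{0,1}\leftrightarrow\mathcal E^\omega_{0,-1}$ is exactly the content of the paper's corresponding lemma.
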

\begin{proof} Using the paracyclic decomposition of the group $H^0(\mathcal A_\theta^{alg} \rtimes \mathbb Z_6, (\mathcal A_\theta^{alg} \rtimes \mathbb Z_6)^ \ast)$  we have the following relation
\begin{center}
$H^0(\mathcal A_\theta^{alg} \rtimes \mathbb Z_6, (\mathcal A_\theta^{alg} \rtimes \mathbb Z_6)^ \ast) = H^0(\mathcal A_\theta, {}_{-\omega}\mathcal A_\theta^{alg \ast})^{\mathbb Z_6} \displaystyle \oplus  H^0(\mathcal A_\theta, {}_{-\omega^2}\mathcal A_\theta^{alg \ast})^{\mathbb Z_6} \displaystyle \oplus H^0(\mathcal A_\theta^{alg}, {}_{\omega}\mathcal A_\theta^{alg \ast})^{\mathbb Z_6} \displaystyle \oplus  H^0(\mathcal A_\theta, {}_{\omega^2}\mathcal A_\theta^{alg \ast})^{\mathbb Z_6}\displaystyle \oplus  H^0(\mathcal A_\theta, {}_{-1}\mathcal A_\theta^{alg \ast})^{\mathbb Z_6}\displaystyle \oplus  H^0(\mathcal A_\theta, \mathcal A_\theta^{alg \ast})^{\mathbb Z_6} = \mathbb C \displaystyle \oplus \mathbb C \displaystyle \oplus \mathbb C^2 \displaystyle \oplus \mathbb C^2 \displaystyle \oplus \mathbb C^2 \displaystyle \oplus \mathbb C = \mathbb C^9$
\end{center}
\end{proof}
\subsection{The Hochschild cohomology groups $H^2(\mathcal A_\theta^{alg} \rtimes \Gamma, (\mathcal A_\theta^{alg} \rtimes \Gamma)^{\ast}))$}~\\

In this section we study the second Hochschild cohomology groups of the $\mathbb Z_3$, $\mathbb Z_4$ and $\mathbb Z_6$ noncommutative algebraic torus orbifold. From \cite{Q2} we have $H^2(\mathcal A_\theta^{alg}, {}_{-1}\mathcal A_\theta^{alg \ast})^{\mathbb Z_2} = 0$. We conclude that 
$$H^2(\mathcal A_\theta^{alg}, {}_{-1}\mathcal A_\theta^{alg \ast})^{\mathbb Z_4} =H^2(\mathcal A_\theta^{alg}, {}_{-1}\mathcal A_\theta^{alg \ast})^{\mathbb Z_6}= 0.$$
Similarly we also conclude that
$$H^2(\mathcal A_\theta^{alg}, {}_{\omega^2}\mathcal A_\theta^{alg \ast})^{\mathbb Z_6} =H^2(\mathcal A_\theta^{alg}, {}_{\omega}\mathcal A_\theta^{alg \ast})^{\mathbb Z_6}= 0.$$
\centerline{\uline{The case $\Gamma = \mathbb Z_3$.}}
We notice that the $\omega$ twisted Hochschild cohomology group $H^2(\mathcal A_\theta^{alg}, {}_{\omega}\mathcal A_\theta^{alg \ast})={}_{\omega}\mathcal A_\theta^{alg \ast} / Im({}_{\omega}\alpha_2)$ is generated by the classes $\varphi_{0,0}$(generated by $U_1^{0}U_2^{0}), \varphi_{1,0}$(generated by $U_1^{-1}U_2^{0})$ and $\varphi_{0,1}$(generated by $U_1^{0}U_2^{-1})$. \par
We argue the case as in \cite{Q2}, for $\varphi \in {}_{\omega}\mathcal A_\theta^{alg \ast}$ and let $\widetilde{\varphi}$ be the corresponding element of $\text{Hom}_{\mathcal B_\theta^{alg}}(J_2,{}_{\omega}\mathcal A_\theta^{alg\ast})$. Where $J_\ast$ is the chain complex $J_{\ast, \omega}^{\mathbb Z_3}$ defined in \cite[Page 340, Section 8]{Q1} and  $\mathcal B_\theta^{alg} = \mathcal A_\theta^{alg} \otimes (\mathcal A_\theta^{alg})^{op}$. We have the following relation
$$\widetilde{\varphi}(a\otimes b \otimes e_1 \wedge e_2)(x)=\varphi((\omega\cdot b)xa),$$
for all  $a,b,x \in \mathcal A_\theta^{alg}$.
Let $\psi = k_2^{\ast} \widetilde{\varphi} = \widetilde{\varphi} \circ k_2$. We also have
$$\psi(x,x_1,x_2)=\widetilde{\varphi}(k_2(I \otimes x_1 \otimes x_2))(x),$$
for all $x,x_1,x_2 \in \mathcal A_\theta^{alg}$.
The group $\omega$ acts on $\mathcal A_\theta^{alg}$ in the bar complex as $$\omega \cdot \chi(x,x_1,x_2)=\chi(\omega\cdot x,\omega \cdot x_1, \omega \cdot x_2).$$ 
Further we pull the map ${}_{\omega}\psi := \omega \cdot \psi$ back on to the Connes complex via the map $h_2^{\ast}$. Let $w=h_2^{\ast}({}_{\omega}\psi)$ denote the pullback of ${}_{\omega}\psi$ on the Connes complex. We have  
\begin{center}
$w(x)={}_{\omega}\psi(x,U_2,U_1)-\lambda{}_{\omega}\psi(x,U_1,U_2)=\psi(\omega \cdot x, \frac{U_1U_2^{-1}}{\sqrt\lambda},U_2^{-1}) -\lambda \psi(\omega \cdot x, U_2^{-1},\frac{U_1U_2^{-1}}{\sqrt\lambda})= \newline
\widetilde{\varphi}(k_2(I \otimes \frac{U_1U_2^{-1}}{\sqrt\lambda}\otimes U_2^{-1}))(\omega\cdot x)-\lambda \widetilde{\varphi}(k_2(I \otimes U_2^{-1}\otimes \frac{U_1U_2^{-1}}{\sqrt\lambda}))(\omega\cdot x)$.
\end{center}
Following the calculations from \cite{C} and \cite[Section 6]{Q1}, we have
$$k_2(I \otimes U_1U_2^{-1}\otimes U_2^{-1})-\lambda k_2(I \otimes U_2^{-1}\otimes U_1U_2^{-1})= (U_2^{-1} \otimes U_2^{-2}).$$
Applying this we conclude that
\begin{center}
$ \displaystyle\frac{1}{\sqrt\lambda}(\widetilde{\varphi}((k_2(I \otimes U_1U_2^{-1}\otimes U_2^{-1}))(\omega\cdot x)-\lambda \widetilde{\varphi}(k_2(I \otimes U_2^{-1}\otimes U_1U_2^{-1})))(\omega\cdot x))=\displaystyle\frac{1}{\sqrt\lambda}\widetilde{\varphi}((U_2^{-1} \otimes U_2^{-2}))(\omega\cdot x) = {\sqrt\lambda}\varphi(U_1^{-2 }U_2^{2} \cdot (\omega \cdot x)\cdot U_2^{-1})$.
\end{center}
To assert the $\mathbb Z_3$ invariance of a 2-cocycle $\varphi$, we need to show that for $x \in \mathcal A_\theta^{alg}$, $\varphi(x)$ and ${\sqrt\lambda}\varphi(U_1^{-2 }U_2^{2} \cdot (\omega \cdot x)\cdot U_2^{-1})$ are the same. \par

We consider the 2-cocycle $\varphi_{0,0}$. On one hand we have $\varphi_{0,0}(x) = x_{0,0}$ while on the other hand
\begin{center}
${\sqrt\lambda}\varphi_{0,0}(U_1^{-2 }U_2^{2} \cdot (\omega \cdot (x_{-1,2}U_1^{-1}U_2^2))\cdot U_2^{-1}) =  \displaystyle\frac{1}{\sqrt\lambda}\varphi(x_{-1,2}U_1^{-2 }U_2^{2} \cdot (U_2 U_1^2 U_2^{-2})\cdot U_2^{-1})=\displaystyle\frac{1}{\sqrt\lambda}\varphi(x_{-1,2} U_1^{-2 }U_2^{3} U_1^2 U_2^{-3} ) = \displaystyle\frac{1}{\sqrt\lambda}\lambda^6\varphi(x_{-1,2} U_1^{0 }U_2^{0}) = \lambda^5 \sqrt\lambda x_{-1,2}$.
\end{center}
The cocycle $\varphi_{0,0}$  and $\lambda^3 \sqrt\lambda \varphi_{-1,2}$ represent the same cocycle as they are separated by a coboundary element. Hence we conclude that the 2-cocycle $\varphi_{0,0}$ is not invariant under the action of $\omega$. Therefore $\varphi_{0,0} \notin H^2(\mathcal A_\theta^{alg}, {}_{\omega}\mathcal A_\theta^{alg \ast})^{\mathbb Z_3}$. 
Repeating the above computations for the cocycles $\varphi_{1,0}$ and $\varphi_{0,1}$ we finally conclude that $H^2(\mathcal A_\theta^{alg}, {}_{\omega}\mathcal A_\theta^{alg \ast})^{\mathbb Z_3} = 0$. \par
Now we consider the group $H^2(\mathcal A_\theta^{alg}, {}_{}\mathcal A_\theta^{alg \ast})$ generated by $U_2U_1$\cite[Lemma 3.2]{Q2}. To check the invariance of the untwisted 2-cocycle (say $\varphi_{-1,-1}$), we need to compare $\varphi_{-1,-1}(x)$ and $\displaystyle\frac{1}{\sqrt\lambda}\widetilde{\varphi}((U_2^{-2} (\omega\cdot x)  U_2^{-1}))$ for all $x \in \mathcal A_\theta^{alg}$. If they are the same, the cocycle is $\mathbb Z_3$ invariant. else not. \par

On one hand we have $\varphi_{-1,-1}(x) = x_{-1,-1}$ and on the other hand
\begin{center}
$\displaystyle\frac{1}{\sqrt\lambda}\varphi_{-1,-1}(U_2^{-2} (\omega\cdot x)  U_2^{-1})= \displaystyle\frac{1}{\sqrt\lambda}\varphi_{-1,-1}(U_2^{-2} (\omega\cdot (x_{-1,-1} U_1^{-1} U_2^{-1}))  U_2^{-1})=\displaystyle\frac{1}{\sqrt\lambda}\varphi_{-1,-1}(U_2^{-2} x_{-1,-1} U_2 {(\frac{U_1 U_2^{-1}}{\sqrt\lambda})}^{-1}  U_2^{-1})=\varphi_{-1,-1}(x_{-1,-1} U_2^{-2} U_2 U_2 U_1^{-1} U_2^{-1}) = \varphi_{-1,-1}(x_{-1,-1} U_1^{-1} U_2^{-1}) = x_{-1,-1}$.
\end{center}

Hence the cocycle is invariant under the action of the group $\mathbb Z_3$ and $H^2(\mathcal A_\theta^{alg}, {}_{}\mathcal A_\theta^{alg \ast})^{\mathbb Z_3} \cong \mathbb C$.

\begin{thm}  \label{thm:hoch23}
$H^2(\mathcal A_\theta^{alg} \rtimes \mathbb Z_3, (\mathcal A_\theta^{alg} \rtimes \mathbb Z_3)^ \ast) \cong \mathbb C$.
\end{thm}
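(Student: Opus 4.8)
The plan is to assemble the statement directly from the paracyclic decomposition together with the three twisted computations carried out immediately above. Applying \cite[Proposition~4.6]{GJ} in degree two gives
$$H^2(\mathcal A_\theta^{alg} \rtimes \mathbb Z_3, (\mathcal A_\theta^{alg} \rtimes \mathbb Z_3)^{\ast}) = \bigoplus_{g \in \mathbb Z_3} H^2(\mathcal A_\theta^{alg}, {}_{g}\mathcal A_\theta^{alg \ast})^{\mathbb Z_3},$$
so the task reduces to evaluating the three summands indexed by $g \in \{1, \omega, \omega^2\}$ and taking their direct sum.

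First I would record the two summands already settled in the discussion preceding the statement. The twisted group $H^2(\mathcal A_\theta^{alg}, {}_{\omega}\mathcal A_\theta^{alg \ast})$ is three-dimensional, spanned by the classes of $\varphi_{0,0}, \varphi_{1,0}, \varphi_{0,1}$, and the pull-back computation on the Connes complex exhibits each generator as non-invariant, whence $H^2(\mathcal A_\theta^{alg}, {}_{\omega}\mathcal A_\theta^{alg \ast})^{\mathbb Z_3} = 0$. The untwisted group $H^2(\mathcal A_\theta^{alg}, \mathcal A_\theta^{alg \ast})$ is one-dimensional, generated by the class of $U_2 U_1$, and the invariance check gives $\varphi_{-1,-1}(x) = \frac{1}{\sqrt\lambda}\varphi_{-1,-1}(U_2^{-2}(\omega \cdot x) U_2^{-1})$, so that $H^2(\mathcal A_\theta^{alg}, \mathcal A_\theta^{alg \ast})^{\mathbb Z_3} \cong \mathbb C$.

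It remains to handle $g = \omega^2$. Here I would repeat the argument of the $g=\omega$ case verbatim: the chain complex $J_{\ast, \omega^2}^{\mathbb Z_3}$ produces the same three generating classes, and the analogous chain-homotopy pull-back formula (with $\omega$ replaced by $\omega^2$ throughout) displays each of them as non-invariant, giving $H^2(\mathcal A_\theta^{alg}, {}_{\omega^2}\mathcal A_\theta^{alg \ast})^{\mathbb Z_3} = 0$. Combining the three summands then yields
$$H^2(\mathcal A_\theta^{alg} \rtimes \mathbb Z_3, (\mathcal A_\theta^{alg} \rtimes \mathbb Z_3)^{\ast}) = 0 \oplus 0 \oplus \mathbb C \cong \mathbb C,$$
as claimed.

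Since the statement is essentially a bookkeeping consequence of the invariance calculations already performed, the only genuine point requiring care is the verification that the $g = \omega^2$ twisted cohomology behaves identically to the $g = \omega$ case. I expect this to follow from the same shift-by-a-multiple-of-three phenomenon in the $m-n$ direction that governed the $g=\omega$ computation; the subtle part, were one to be fully rigorous, is confirming that no nonzero linear combination of the three generators becomes $\omega^2$-invariant — not merely that each generator individually fails to be fixed.
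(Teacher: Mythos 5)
Your proposal is correct and follows essentially the same route as the paper: the paracyclic decomposition in degree two, the vanishing of the $\omega$- and $\omega^2$-twisted invariant summands established by the pull-back computation on the Connes complex, and the one-dimensional untwisted invariant summand generated by the class of $U_2U_1$. Your closing remark---that one should check no nonzero linear combination of the twisted generators becomes invariant, rather than only each generator separately---is a legitimate point of rigor that the paper itself glosses over, but it does not change the argument or the conclusion.
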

\begin{proof} Using the paracyclic decomposition of the group $H^2(\mathcal A_\theta^{alg} \rtimes \mathbb Z_3, (\mathcal A_\theta^{alg} \rtimes \mathbb Z_3)^ \ast)$  we have the following relation
\begin{center}
$H^2(\mathcal A_\theta^{alg} \rtimes \mathbb Z_3, (\mathcal A_\theta^{alg} \rtimes \mathbb Z_3)^ \ast) = H^2(\mathcal A_\theta, {}_{\omega}\mathcal A_\theta^{alg \ast})^{\mathbb Z_3} \displaystyle \oplus  H^2(\mathcal A_\theta, {}_{\omega^2}\mathcal A_\theta^{alg \ast})^{\mathbb Z_3} \displaystyle \oplus H^2(\mathcal A_\theta^{alg}, {}_{}\mathcal A_\theta^{alg \ast})^{\mathbb Z_3}= 0 \displaystyle \oplus 0 \displaystyle \oplus \mathbb C = \mathbb C$
\end{center}
\end{proof}

\centerline{\uline{The case $\Gamma = \mathbb Z_4$.}}
We notice from \cite{Q2} that $H^2(\mathcal A_\theta, {}_{-1}\mathcal A_\theta^{alg \ast})^{\mathbb Z_2}= H^2(\mathcal A_\theta, {}_{-1}\mathcal A_\theta^{alg \ast})^{\mathbb Z_4} = 0$ and the $i$ twisted Hochschild cohomology group $H^2(\mathcal A_\theta^{alg}, {}_{i}\mathcal A_\theta^{alg \ast})={}_{i}\mathcal A_\theta^{alg \ast} / Im({}_{i}\alpha_2)$ is generated by the classes $\varphi_{0,0}$(generated by $U_1^{0}U_2^{0})$ and $\varphi_{1,0}$(generated by $U_1^{-1}U_2^{0})$. \par
As in the previous case and using the results of \cite{C} and  \cite[Section 6]{Q1} we are finally down to comparing $\varphi(x)$ with the following
$$\widetilde\varphi(U_2^{-1} \otimes  U_2^{-1})(i \cdot x).$$
On one hand we have $\varphi_{0,0}(x) = x_{0,0}$ while on the other hand
\begin{center}
$\varphi(U_1^{-1} (i \cdot x) U_2^{-1}) =\varphi(U_1^{-1} (i \cdot (x_{-1,1}U_1^{-1}U_2)) U_2^{-1})=\varphi(x_{-1,1}U_1^{-1} (i \cdot (U_1^{-1}U_2)) U_2^{-1})=\varphi(x_{-1,1}U_1^{-1} U_2U_1 U_2^{-1})= \lambda x_{-1,1}$.
\end{center}
But since  $\varphi_{0,0}$ and $ \varphi_{-1,1}$ represent the same cocycle being separated by a coboundary element. We conclude that the 2-cocycle $\varphi_{0,0}$ is not invariant under the action of $i$. Hence $\varphi_{0,0} \notin H^2(\mathcal A_\theta^{alg}, {}_{i}\mathcal A_\theta^{alg \ast})^{\mathbb Z_4}$. 
Repeating the above computations for the cocycle $\varphi_{1,0}$ we observe that $H^2(\mathcal A_\theta^{alg}, {}_{i}\mathcal A_\theta^{alg \ast})^{\mathbb Z_4} = 0$. \par
We consider the group $H^2(\mathcal A_\theta^{alg}, {}_{}\mathcal A_\theta^{alg \ast})$ generated by $U_2U_1$. To check the invariance of this untwisted 2-cocycle $\varphi_{-1,-1}$, we need to compare $\varphi_{-1,-1}(x)$ and $\varphi_{-1,-1}(U_2^{-1} (i \cdot x) U_2^{-1})$ for all $x \in \mathcal A_\theta^{alg}$. If they are the same, the cocycle is $\mathbb Z_4$ invariant, else not. \par

On one hand we have $\varphi_{-1,-1}(x) = x_{-1,-1}$ and on the other hand
\begin{center}
$\varphi_{-1,-1}(U_2^{-1} (i \cdot x) U_2^{-1})=\varphi_{-1,-1}(U_2^{-1} (i \cdot (x_{-1,-1} U_1^{-1} U_2^{-1})) U_2^{-1})=\varphi_{-1,-1}(x_{-1,-1}U_2^{-1} (i \cdot (U_1^{-1} U_2^{-1})) U_2^{-1})=\varphi_{-1,-1}(x_{-1,-1}U_2^{-1} U_2  U_1^{-1} U_2^{-1})= x_{-1,-1}.$
\end{center}

Hence the cocycle is invariant under the action of the group $\mathbb Z_3$ and $H^2(\mathcal A_\theta^{alg}, {}_{}\mathcal A_\theta^{alg \ast})^{\mathbb Z_4} \cong \mathbb C$.

\begin{thm}  \label{thm:hoch24}
$H^2(\mathcal A_\theta^{alg} \rtimes \mathbb Z_4, (\mathcal A_\theta^{alg} \rtimes \mathbb Z_4)^ \ast) \cong \mathbb C$.
\end{thm}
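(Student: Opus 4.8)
The plan is to apply the paracyclic decomposition of \cite[Proposition~4.6]{GJ}, exactly as in the proof of Theorem~\ref{thm:hoch23}, to reduce the computation to the four twisted second cohomology groups indexed by the elements of $\mathbb Z_4 = \{1, i, -1, -i\}$. That is, I would first write
$$H^2(\mathcal A_\theta^{alg} \rtimes \mathbb Z_4, (\mathcal A_\theta^{alg} \rtimes \mathbb Z_4)^{\ast}) = \bigoplus_{g \in \mathbb Z_4} H^2(\mathcal A_\theta^{alg}, {}_{g}\mathcal A_\theta^{alg \ast})^{\mathbb Z_4},$$
so that the theorem follows once the $\mathbb Z_4$-invariant part of each twisted summand is identified and the four contributions are added.

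Next I would collect the three summands already treated in the discussion preceding the statement. The untwisted summand is $H^2(\mathcal A_\theta^{alg}, \mathcal A_\theta^{alg \ast})^{\mathbb Z_4} \cong \mathbb C$, generated by the class of $U_2 U_1$ (the cocycle $\varphi_{-1,-1}$), whose invariance was verified above by checking that $\varphi_{-1,-1}(x)$ and $\varphi_{-1,-1}(U_2^{-1} (i\cdot x) U_2^{-1})$ agree. The $-1$-twisted summand $H^2(\mathcal A_\theta^{alg}, {}_{-1}\mathcal A_\theta^{alg \ast})^{\mathbb Z_4}$ vanishes, as recorded from \cite{Q2} via the vanishing of the $\mathbb Z_2$-invariant group. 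The $i$-twisted summand $H^2(\mathcal A_\theta^{alg}, {}_{i}\mathcal A_\theta^{alg \ast})^{\mathbb Z_4} = 0$ was obtained just above by pushing the two generating classes $\varphi_{0,0}$ and $\varphi_{1,0}$ into the bar complex, applying the $i$-action, and pulling back along the explicit chain homotopy to see that neither class is invariant.

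The only summand not handled explicitly above is the $-i$-twisted piece. For this I would invoke the isomorphism between the $i$-twisted and $-i$-twisted Hochschild cohomology complexes already noted in the $\mathbb Z_4$ discussion; because that isomorphism arises from the symmetry $i \leftrightarrow i^{3} = -i$ of the cyclic group, it is compatible with the $\mathbb Z_4$-action and therefore carries invariant classes to invariant classes, giving $H^2(\mathcal A_\theta^{alg}, {}_{-i}\mathcal A_\theta^{alg \ast})^{\mathbb Z_4} \cong H^2(\mathcal A_\theta^{alg}, {}_{i}\mathcal A_\theta^{alg \ast})^{\mathbb Z_4} = 0$. Equivalently, one may simply repeat the chain-homotopy computation of the preceding paragraph with $i$ replaced by $-i$.

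Summing the four contributions yields $0 \oplus 0 \oplus 0 \oplus \mathbb C = \mathbb C$, which is the assertion. The genuinely delicate step is not this final bookkeeping but the bar-complex/Connes-complex comparison through the chain homotopy that decides non-invariance of each twisted generating class; since those comparisons have already been carried out in the paragraphs immediately preceding the statement, the proof of the theorem itself reduces to assembling the four summands.
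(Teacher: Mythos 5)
Your proposal is correct and follows essentially the same route as the paper: the paracyclic decomposition into the four twisted summands indexed by $\mathbb Z_4$, with the untwisted piece contributing $\mathbb C$, the $-1$-twisted piece vanishing by the result imported from \cite{Q2}, the $i$-twisted piece vanishing by the chain-homotopy comparison carried out just before the statement, and the $-i$-twisted piece handled by its isomorphism with the $i$-twisted one. The paper's own proof is exactly this final bookkeeping step, so no further comparison is needed.
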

\begin{proof} Using the paracyclic decomposition of the group $H^2(\mathcal A_\theta^{alg} \rtimes \mathbb Z_4, (\mathcal A_\theta^{alg} \rtimes \mathbb Z_4)^ \ast)$  we have the following relation
\begin{center}
$H^2(\mathcal A_\theta^{alg} \rtimes \mathbb Z_4, (\mathcal A_\theta^{alg} \rtimes \mathbb Z_4)^ \ast) = H^2(\mathcal A_\theta, {}_{i}\mathcal A_\theta^{alg \ast})^{\mathbb Z_4} \displaystyle \oplus  H^2(\mathcal A_\theta, {}_{-i}\mathcal A_\theta^{alg \ast})^{\mathbb Z_4} \displaystyle \oplus  H^2(\mathcal A_\theta, {}_{-1}\mathcal A_\theta^{alg \ast})^{\mathbb Z_4} \displaystyle \oplus H^2(\mathcal A_\theta^{alg}, {}_{}\mathcal A_\theta^{alg \ast})^{\mathbb Z_4}= 0 \displaystyle \oplus 0 \displaystyle \oplus 0 \displaystyle \oplus \mathbb C = \mathbb C$
\end{center}
This completes the proof.
\end{proof}

\centerline{\uline{The case $\Gamma = \mathbb Z_6$.}}
We notice from preceding theorems that 
\begin{center}
$H^2(\mathcal A_\theta, {}_{\omega}\mathcal A_\theta^{alg \ast})^{\mathbb Z_3}= H^2(\mathcal A_\theta, {}_{\omega}\mathcal A_\theta^{alg \ast})^{\mathbb Z_6} =H^2(\mathcal A_\theta, {}_{\omega^2}\mathcal A_\theta^{alg \ast})^{\mathbb Z_3}= H^2(\mathcal A_\theta, {}_{\omega^2}\mathcal A_\theta^{alg \ast})^{\mathbb Z_6}=  H^2(\mathcal A_\theta, {}_{-1}\mathcal A_\theta^{alg \ast})^{\mathbb Z_3} =H^2(\mathcal A_\theta, {}_{-1}\mathcal A_\theta^{alg \ast})^{\mathbb Z_6}= H^2(\mathcal A_\theta, {}_{\omega}\mathcal A_\theta^{alg \ast})^{\mathbb Z_6}=0$.
\end{center}
The $-\omega$ twisted Hochschild cohomology group $H^2(\mathcal A_\theta^{alg}, {}_{-\omega}\mathcal A_\theta^{alg \ast})={}_{-\omega}\mathcal A_\theta^{alg \ast} / Im({}_{-\omega}\alpha_2)$ is generated by the classes $\varphi_{0,0}$. \par
As in both the previous cases, to check the $\mathbb Z_6$ invariance of an $-\omega$ twisted Hochschild cocycle $\varphi$, we compare $\varphi(x)$ with the following
$$\frac{1}{\sqrt\lambda}\widetilde\varphi(U_1^{-1} \otimes U_1^{-1}U_2)(-\omega \cdot x).$$
If $\varphi(x)$ equals $\frac{1}{\sqrt\lambda}\widetilde\varphi(U_1^{-1} \otimes U_1^{-1}U_2)(-\omega \cdot x)$, we say that the the cocycle $\varphi$ is $\mathbb Z_6$ invariant, else not.
Now let us consider the 2-cocycle $\varphi_{0,0}$. On one hand we have $\varphi_{0,0}(x) = x_{0,0}$ while on the other hand
\begin{center}
$\frac{1}{\sqrt\lambda}\widetilde{\varphi_{0,0}}(U_1^{-1} \otimes U_1^{-1}U_2)(-\omega \cdot x) = \frac{1}{\lambda} \varphi_{0,0}(U_2^{-1} U_1^{-1} U_2 (-\omega \cdot x) U_1^{-1}) = \frac{1}{\lambda^2} \varphi_{0,0}(U_1^{-1} (-\omega \cdot x) U_1^{-1}))=\frac{1}{\lambda} x_{2,-2}$.
\end{center}
Since in this case the cocycle $\varphi_{0,0}$ and $\lambda^{2}\varphi_{2,-2}$ are the same cocycle. We conclude that the 2-cocycle $\varphi_{0,0}$ is not invariant under the action of $-\omega$. Hence we observe that $$H^2(\mathcal A_\theta^{alg}, {}_{-\omega}\mathcal A_\theta^{alg \ast})^{\mathbb Z_6} = 0.$$
Now we check the invariance the group $H^2(\mathcal A_\theta^{alg}, {}_{}\mathcal A_\theta^{alg \ast})$ generated by $U_2U_1$. For this we need to compare $\varphi_{-1,-1}(x)$ and ${\sqrt\lambda}\widetilde{\varphi}((U_1^{-1} (-\omega\cdot x)  U_1^{-1}U_2))$ for all $x \in \mathcal A_\theta^{alg}$. If they are the same, the cocycle is $\mathbb Z_6$ invariant. else not. \par

On one hand we have $\varphi_{-1,-1}(x) = x_{-1,-1}$ and on the other hand
\begin{center}
$ \frac{1}{\sqrt\lambda}{\varphi_{-1,-1}}((U_1^{-1} U_2 (-\omega\cdot x)  U_1^{-1}))=\frac{1}{\sqrt\lambda}{\varphi_{-1,-1}}((U_1^{-1} (-\omega\cdot (x_{-1,-1} U_1^{-1} U_2^{-1}))  U_1^{-1}U_2))=\frac{1}{\sqrt\lambda}{\varphi_{-1,-1}}(x_{-1,-1} (U_1^{-1} (-\omega\cdot (U_1^{-1} U_2^{-1}))  U_1^{-1}U_2))=\frac{1}{\sqrt\lambda}\varphi_{-1,-1}(x_{-1,-1}U_1^{-1} U_2 U_2^{-1} (\frac{U_1^{-1}U_2}{\sqrt\lambda})^{-1}U_1^{-1})=\varphi_{-1,-1}(x_{-1,-1}U_1^{-1} U_2^{-1} )= x_{-1,-1}$.
\end{center}

Hence the cocycle is invariant under the action of the group $\mathbb Z_6$ and $H^2(\mathcal A_\theta^{alg}, {}_{}\mathcal A_\theta^{alg \ast})^{\mathbb Z_6} \cong \mathbb C$.

\begin{thm}  \label{thm:hoch26}
$H^2(\mathcal A_\theta^{alg} \rtimes \mathbb Z_6, (\mathcal A_\theta^{alg} \rtimes \mathbb Z_6)^ \ast) \cong \mathbb C$.
\end{thm}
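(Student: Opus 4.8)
The plan is to follow exactly the template used for $\Gamma = \mathbb Z_3$ and $\Gamma = \mathbb Z_4$: apply the paracyclic decomposition
$$H^2(\mathcal A_\theta^{alg} \rtimes \mathbb Z_6, (\mathcal A_\theta^{alg} \rtimes \mathbb Z_6)^{\ast}) = \bigoplus_{g \in \mathbb Z_6} H^2(\mathcal A_\theta^{alg}, {}_{g}\mathcal A_\theta^{alg \ast})^{\mathbb Z_6},$$
and then evaluate each of the six summands. First I would list the elements of $\mathbb Z_6 \subset SL(2,\mathbb Z)$ as powers of the generator $-\omega$, namely $1$, $-\omega$, $(-\omega)^2 = \omega^2$, $(-\omega)^3 = -1$, $(-\omega)^4 = \omega$, $(-\omega)^5 = -\omega^2$, so that the six twists to be examined are $1, -\omega, \omega^2, -1, \omega, -\omega^2$.

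Next I would simply collect the vanishing statements already established in this section. The displayed equalities at the start of the $\mathbb Z_6$ discussion give $H^2(\mathcal A_\theta, {}_{\omega}\mathcal A_\theta^{alg \ast})^{\mathbb Z_6} = H^2(\mathcal A_\theta, {}_{\omega^2}\mathcal A_\theta^{alg \ast})^{\mathbb Z_6} = H^2(\mathcal A_\theta, {}_{-1}\mathcal A_\theta^{alg \ast})^{\mathbb Z_6} = 0$; the explicit chain-homotopy computation gives $H^2(\mathcal A_\theta^{alg}, {}_{-\omega}\mathcal A_\theta^{alg \ast})^{\mathbb Z_6} = 0$; and the isomorphism between the $-\omega$ and $-\omega^2$ twisted complexes forces $H^2(\mathcal A_\theta^{alg}, {}_{-\omega^2}\mathcal A_\theta^{alg \ast})^{\mathbb Z_6} = 0$ as well. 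The only surviving summand is the untwisted one, for which the computation with the cocycle generated by $U_2 U_1$ yields $H^2(\mathcal A_\theta^{alg}, \mathcal A_\theta^{alg \ast})^{\mathbb Z_6} \cong \mathbb C$. Summing five zeros and one copy of $\mathbb C$ gives the claimed $\mathbb C$, so I would present the conclusion in a single displayed equation mirroring the proofs of Theorems \ref{thm:hoch23} and \ref{thm:hoch24}.

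The genuinely substantive work, the place where the real obstacle lies, is not in this final assembly but in the verification $H^2(\mathcal A_\theta^{alg}, {}_{-\omega}\mathcal A_\theta^{alg \ast})^{\mathbb Z_6} = 0$. That step requires pushing the single generating $2$-cocycle $\varphi_{0,0}$ into the bar complex, applying the $-\omega$ action, and pulling back along the chain homotopy $h_2^{\ast}$ to land the comparison expression $\frac{1}{\sqrt\lambda}\widetilde\varphi(U_1^{-1} \otimes U_1^{-1}U_2)(-\omega \cdot x)$ on the Connes complex; one then checks that this differs from $\varphi_{0,0}(x)$ by a nonzero power of $\lambda$, so no nonzero invariant class survives. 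The parallel check for the untwisted generator, by contrast, produces an exact match and hence the surviving $\mathbb C$. Once that normalization bookkeeping is in place, the remainder is the routine direct-sum count described above.
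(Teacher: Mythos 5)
Your proposal is correct and follows essentially the same route as the paper: the paracyclic decomposition over the six twists $1,-\omega,\omega^2,-1,\omega,-\omega^2$, the vanishing of the five twisted summands (with the $-\omega$ case handled by the push--pull comparison of $\varphi_{0,0}(x)$ against $\frac{1}{\sqrt\lambda}\widetilde\varphi(U_1^{-1}\otimes U_1^{-1}U_2)(-\omega\cdot x)$, and $-\omega^2$ by the isomorphism with the $-\omega$ complex), and the surviving untwisted class generated by $U_2U_1$ giving the single copy of $\mathbb C$. You also correctly locate the substantive work in the $-\omega$ twisted verification rather than in the final direct-sum bookkeeping.
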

\begin{proof} Using the paracyclic decomposition of the group $H^2(\mathcal A_\theta^{alg} \rtimes \mathbb Z_6, (\mathcal A_\theta^{alg} \rtimes \mathbb Z_6)^ \ast)$  we have the following relation
\begin{center}
$H^2(\mathcal A_\theta^{alg} \rtimes \mathbb Z_6, (\mathcal A_\theta^{alg} \rtimes \mathbb Z_6)^ \ast) = H^2(\mathcal A_\theta, {}_{-\omega}\mathcal A_\theta^{alg \ast})^{\mathbb Z_6} \displaystyle \oplus  H^2(\mathcal A_\theta, {}_{-\omega^2}\mathcal A_\theta^{alg \ast})^{\mathbb Z_6} \displaystyle \oplus  H^2(\mathcal A_\theta, {}_{\omega^2}\mathcal A_\theta^{alg \ast})^{\mathbb Z_6}\displaystyle \oplus  H^2(\mathcal A_\theta, {}_{\omega}\mathcal A_\theta^{alg \ast})^{\mathbb Z_6}\displaystyle \oplus  H^2(\mathcal A_\theta, {}_{-1}\mathcal A_\theta^{alg \ast})^{\mathbb Z_6} \displaystyle \oplus H^2(\mathcal A_\theta^{alg}, {}_{}\mathcal A_\theta^{alg \ast})^{\mathbb Z_6}= 0 \displaystyle \oplus 0 \displaystyle \oplus 0 \displaystyle \oplus 0 \displaystyle \oplus 0 \displaystyle \oplus \mathbb C = \mathbb C$
\end{center}
\end{proof}

\subsection{The Hochschild cohomology groups $H^1(\mathcal A_\theta^{alg} \rtimes \Gamma, (\mathcal A_\theta^{alg} \rtimes \Gamma)^{\ast}))$}~\\
\centerline{\uline{The case $\Gamma = \mathbb Z_3$.}}
In this case we have the following paracyclic decomposition of the first Hochschild cohomology group for $\mathcal A_\theta^{alg} \rtimes \mathbb Z_3$.
$$H^1(\mathcal A_\theta^{alg} \rtimes \mathbb Z_3, (\mathcal A_\theta^{alg} \rtimes \mathbb Z_3)^\ast)=H^1(\mathcal A_\theta^{alg}, \mathcal A_\theta^{alg \ast})^{\mathbb Z_3} \displaystyle \oplus H^1(\mathcal A_\theta^{alg}, {}_{\omega}\mathcal A_\theta^{alg \ast})^{\mathbb Z_3}\displaystyle \oplus H^1(\mathcal A_\theta^{alg}, {}_{\omega^2}\mathcal A_\theta^{alg \ast})^{\mathbb Z_3}.$$
We observe that $H^1(\mathcal A_\theta^{alg}, \mathcal A_\theta^{alg \ast}) = \mathbb C^2$, generated by $\varphi^1_{-1,0}$ and $\varphi^2_{0,-1}$ \cite{C}. We need to identify its $\mathbb Z_3$ invariant subgroup. For $a,b \in \mathbb C$, we consider the cocycle $\chi := (a\varphi^1_{-1,0},b\varphi^2_{0,-1}) \in \mathcal A_\theta^{alg \ast} \oplus \mathcal A_\theta^{alg \ast}$ and push forward to the bar complex, thereafter we pull it back to the bar complex after the $\mathbb Z_3$ action of  it. Proceeding in a similar way as in \cite{Q2} we finally observe that $h_1^{\ast}(\omega \cdot(  k_{1 \ast} \chi))(x) = (-a\lambda\sqrt\lambda x_{-1,-1}, \frac{b}{\sqrt\lambda} x_{0,0}-b\lambda^{-2} x_{2,-1}) $. Hence we conclude that $H^1(\mathcal A_\theta^{alg}, \mathcal A_\theta^{alg \ast})^{\mathbb Z_3} = 0$. Since we have $H^1(\mathcal A_\theta^{alg}, \mathcal A_\theta^{alg \ast})^{\mathbb Z_2} = H^1(\mathcal A_\theta^{alg}, \mathcal A_\theta^{alg \ast})^{\mathbb Z_3} = 0$, we conclude that 
$$H^1(\mathcal A_\theta^{alg}, \mathcal A_\theta^{alg \ast})^{\mathbb Z_4} = H^1(\mathcal A_\theta^{alg}, \mathcal A_\theta^{alg \ast})^{\mathbb Z_6} = 0.$$
Now we turn our attention to understand the group $H^1(\mathcal A_\theta^{alg}, {}_{\omega}\mathcal A_\theta^{alg \ast})$. We deploy the same method which we had used to calculate $H^1(\mathcal A_\theta^{alg}, {}_{-1}\mathcal A_\theta^{alg \ast})$ in \cite{Q2}. Below, a typical portion of the infinite graph corresponding to a ${}_{\omega}\alpha_2$ kernel equation is plotted on the $\mathbb Z^2$ lattice plane.
\begin{center}
\begin{tikzpicture}
\fill (-.5,0.5) circle (2pt);
\fill (1,0.5) circle (2pt);
\fill (2.5,0.5) circle (2pt);
\fill (-1,.5) circle (4pt);
\fill (0.5,.5) circle (4pt);
\fill (2,.5) circle (4pt);
\fill (-1,0) circle (2pt);
\fill (0.5,0) circle (2pt);
\fill (2,0) circle (2pt);
\fill (-1.5,0) circle (4pt);
\fill (0,0) circle (4pt);
\fill (1.5,0) circle (4pt);
\fill (-1.5,-0.5) circle (2pt);
\fill (0,-0.5) circle (2pt);
\fill (1.5,-0.5) circle (2pt);
\fill (-2,-0.5) circle (4pt);
\fill (-0.5,-0.5) circle (4pt);
\fill (1,-0.5) circle (4pt);
\fill (-1,-1) circle (4pt);
\fill (-1.5,-1.5) circle (4pt);
\fill (0.5,-1) circle (4pt);
\fill (0,-1.5) circle (4pt);
\fill (-0.5,-2) circle (4pt);
\fill (2.5,-.5) circle (4pt);
\fill (-1,-2.5) circle (4pt);
\fill (2,-1) circle (4pt);
\fill (1,-1) circle (2pt);
\fill (.5,-1.5) circle (2pt);
\fill (0,-2) circle (2pt);
\fill (-.5,-1) circle (2pt);
\fill (-1,-1.5) circle (2pt);
\draw[thick](-1,0.5)--(-.5,0.5)--(-.5,-0.5)--(-1,0)--(-1,.5)node[right]{};
\draw[thick](.5,0.5)--(1,0.5)--(1,-0.5)--(.5,0)--(.5,.5)node[right]{};
\draw[thick](2,0.5)--(2.5,0.5)--(2.5,-0.5)--(2,0)--(2,.5)node[right]{};
\draw[thick](1.5,0)--(2,0)--(2,-1)--(1.5,-0.5)--(1.5,0)node[right]{};
\draw[thick](0,0)--(.5,0)--(.5,-1)--(0,-0.5)--(0,0)node[right]{};
\draw[thick](-0.5,-0.5)--(0,-0.5)--(0,-1.5)--(-0.5,-1)--(-.5,-.5)node[right]{};
\draw[thick](-1.5,0)--(-1,0)--(-1,-1)--(-1.5,-0.5)--(-1.5,0)node[right]{};
\draw[thick](-2,-0.5)--(-1.5,-0.5)--(-1.5,-1.5)--(-2,-1)--(-2,-0.5)node[right]{};
\draw[thick](-1,-1)--(-0.5,-1)--(-0.5,-2)--(-1,-1.5)--(-1,-1)node[right]{};
\draw[thick](-1.5,-1.5)--(-1,-1.5)--(-1,-2.5)--(-1.5,-2)--(-1.5,-1.5)node[right]{};
\draw[thick](1,-.5)--(1.5,-.5)--(1.5,-1.5)--(1,-1)--(1,-.5)node[right]{};
\draw[thick](.5,-1)--(1,-1)--(1,-2)--(.5,-1.5)--(.5,-1)node[right]{};
\draw[thick](0,-1.5)--(.5,-1.5)--(.5,-2.5)--(0,-2)--(0,-1.5)node[right]{};
\draw[dashed](-2,-3.5)--(3,1.5)node[right]{$y-x =c$};
\end{tikzpicture}
\end{center}

The big dots corresponds to the $\varphi^1$ elements while the smaller dots are $\varphi^2$'s. For a given cocycle $\sigma$ let $Dia(\sigma) \subset \mathbb Z^2 \oplus \mathbb Z^2 \oplus \mathbb Z^2$ be the associated diagram \cite[Page 334]{Q1}. Let $\pi_i$ denote the $i^{th}$ projection map on the lattice $\mathbb Z^2$. With out loss of generality assume that in the projected diagram $\pi_1(Dia(\sigma))_{0,0}$ the element $\varphi^2_{0,0} = 0$ and $\varphi^1_{0,0} \neq 0$. For each $c \in \mathbb Z$ we shall construct below $\gamma_c \in \mathcal A_\theta^{alg \ast}$ such that $\pi_1(Dia({}_{\omega}\alpha_1(\gamma_c)))_{n,m} = \pi_1(Dia(\sigma))_{n,m}$ for all $n,m \in \mathbb Z$ such that $m-n = 3c$. Thereafter, we shall prove that for $\gamma := {\sum}_{c \in \mathbb Z} \gamma_c$
$$\pi_1(Dia({}_{\omega}\alpha_1(\gamma))_{r,s} - \pi_1(Dia(\sigma))_{r,s} = (0,\bullet) \text{ for all } r,s \in \mathbb Z.$$
Hence by repetition of the arguments for the projected diagrams $\pi_2(Dia(\sigma))_{0,0}$ and $\pi_3(Dia(\sigma))_{0,0}$, we shall conclude that every cocycle is the trivial cocycle.

\begin{lemma}
For $\sigma \in ker{}_{\omega}\alpha_2$. There exists $\gamma \in \mathcal A_\theta^{alg \ast}$ such that $\pi_1(Dia({}_{\omega}\alpha_1(\gamma)))_{n,m} = \pi_1(Dia(\sigma))_{n,m}$.
\end{lemma}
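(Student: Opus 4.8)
The plan is to realise the first projection of $\sigma$ as the first projection of an explicit coboundary, solving the defining equation site by site on the lattice. Writing $\gamma=\sum_{n,m}\gamma_{n,m}U_1^nU_2^m\in\mathcal A_\theta^{alg \ast}$ and using $U_2^aU_1^b=\lambda^{ab}U_1^bU_2^a$, one computes that the first coordinate $U_2^{-1}\gamma-\gamma U_1$ of ${}_{\omega}\alpha_1(\gamma)$ has coefficient $\lambda^{-n}\gamma_{n,m+1}-\lambda^{m}\gamma_{n-1,m}$ at the site $(n,m)$. Hence the requirement $\pi_1(Dia({}_{\omega}\alpha_1(\gamma)))_{n,m}=\pi_1(Dia(\sigma))_{n,m}$ is exactly the scalar recurrence $\lambda^{-n}\gamma_{n,m+1}-\lambda^{m}\gamma_{n-1,m}=\pi_1(Dia(\sigma))_{n,m}$, and the whole problem reduces to solving it.

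The \emph{key observation} is that both $\gamma$-terms, $\gamma_{n,m+1}$ and $\gamma_{n-1,m}$, sit on the single diagonal $b-a=(m-n)+1$, one step above the diagonal $m-n$ that carries the prescribed datum. Consequently, if I seek $\gamma_c$ supported on the diagonal $b-a=3c+1$, then the first coordinate of ${}_{\omega}\alpha_1(\gamma_c)$ is supported exactly on $m-n=3c$, with no spill onto neighbouring diagonals. Along that diagonal the recurrence is first order in $n$, so once the value at the normalised base site (where $\varphi^1_{0,0}\neq0$ and $\varphi^2_{0,0}=0$) is fixed, every $\gamma_{n,n+3c+1}$ is determined by forward and backward propagation. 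Since $\gamma$ is sought in the dual $\mathcal A_\theta^{alg \ast}$ rather than in $\mathcal A_\theta^{alg}$ itself, the resulting two-sided formal series is automatically a legitimate element: there is no finiteness constraint to obstruct the integration.

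Having produced each $\gamma_c$, I would set $\gamma=\sum_{c\in\mathbb{Z}}\gamma_c$. The summands have pairwise disjoint supports, so $\gamma\in\mathcal A_\theta^{alg \ast}$ is well defined, and because the first coordinate of ${}_{\omega}\alpha_1(\gamma_c)$ meets only the diagonal $m-n=3c$, the separate contributions never interfere. This yields $\pi_1(Dia({}_{\omega}\alpha_1(\gamma)))_{n,m}=\pi_1(Dia(\sigma))_{n,m}$ for every $(n,m)$, which is precisely the assertion of the lemma.

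The step I expect to be delicate is verifying that the one-step recurrence, anchored at a single base site, in fact reproduces the \emph{entire} chain of prescribed first-projection values strung along a diagonal, i.e. that the staircase drawn in the figure is internally consistent. This is exactly where the hypothesis $\sigma\in\ker{}_{\omega}\alpha_2$ enters: the kernel equations are the parallelogram relations linking the $\varphi^1$ and $\varphi^2$ entries along each staircase, and they force the prescribed data to be compatible with a single first-order solution along the diagonal. Granting this consistency, ${}_{\omega}\alpha_1(\gamma)-\sigma$ has vanishing first coordinate by construction, that is, it is of the form $(0,\bullet)$, which is precisely the input needed for the analogous treatment of $\pi_2$ and $\pi_3$ that will finish the proof that $H^1(\mathcal A_\theta^{alg},{}_{\omega}\mathcal A_\theta^{alg \ast})^{\mathbb{Z}_3}=0$.
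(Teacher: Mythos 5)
Your proposal matches the paper's proof essentially verbatim: the paper likewise builds each $\gamma_c$ supported on a single diagonal by fixing a base value at $(-1,0)$ (namely $-\varphi^1_{0,0}$, with $0$ at $(0,1)$) and propagating the resulting first-order recurrence in both directions along that diagonal, then sums the disjointly supported $\gamma_c$ to obtain $\pi_1(Dia(\sigma-{}_{\omega}\alpha_1(\gamma)))_{n,m}=(0,\bullet)$ everywhere. The one step you flag as delicate is actually automatic --- a first-order recurrence along a diagonal is solvable for an arbitrary right-hand side, so no compatibility condition from $\sigma\in\ker{}_{\omega}\alpha_2$ is needed to match the $\varphi^1$ data; that hypothesis enters only afterwards, in showing that the leftover $\varphi^2$ entries constitute a trivial cocycle, a step the paper also defers.
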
 

\begin{proof}
Without loss of generality we assume that $\pi_1(Dia(\sigma))_{0,0} = (\bullet,0)$.  Let $\gamma_0$ be defined as
\begin{center}
$(\gamma_0)_{n,m} = \begin{cases}
-\varphi^1_{0,0} & \text{ for } (n,m) = (-1,0)\\
0  & \text{ for } (n,m)=(0,1) \text{ and for }(n,m) \text{ such that } m-n \neq 1 . \end{cases}$\\
\end{center}
Thereafter we define $(\gamma_0)_{n,m}$ for $n<0  \text{ and }m-n=1$ in the following iterated way.
$$(\gamma_0)_{n,m} = \lambda^{-m} \varphi^1{'}_{n+1,m}.$$
Where $\varphi^1{'}_{n+1,m}$ is first enrty of $\pi_1(Dia(\sigma- {}_{\omega}\alpha_1(\gamma_0)))_{n+1,m}$.
We see that $$\pi_1(Dia(\sigma- {}_{\omega}\alpha_1(\gamma_0)))_{n,m}=(0,0) \text{ for all }(n,m) \text{ such that }m=n\leq0.$$
Similarly we can define $(\gamma_0)_{n,m}$ for $n >0$ and $m-n = 1$ in the following way:
$$(\gamma_0)_{n,m}= \lambda^{n} \varphi^1{'}_{n,m-1}.$$
With this we have defined $\gamma_0$ at all lattice points on the plane. And we can easily see that: 
$$\pi_1(Dia(\sigma- {}_{\omega}\alpha_1(\gamma_0)))_{n,m}=(0,0) \text{ for all }(n,m) \text{ such that }m=n.$$
Putting a similar argument for any given $c \in \mathbb Z$, we get $\gamma_c$ such that:
$$\pi_1(Dia(\sigma- {}_{\omega}\alpha_1(\gamma_c)))_{n,m}=(0,0) \text{ for all }(n,m) \text{ such that }m-n=3c.$$
Furthermore the element $\gamma := {\sum}_{c \in \mathbb Z} \gamma_c$ satisfies the property that:
$$\pi_1(Dia(\sigma- {}_{\omega}\alpha_1(\gamma)))_{n,m}=(0,\bullet) \text{ for all }(n,m).$$
But, all $\varphi^1$ elements zero in the diagram $\pi_1(Dia(\sigma-{}_{\omega}\alpha_1(\gamma)))$. It can be easily shown that the $\varphi^2$ elements constitute a zero cocycle.
\end{proof}
\begin{thm}\label{thm:hoch13}
$H^1(\mathcal A_\theta^{alg} \rtimes \mathbb Z_3, (\mathcal A_\theta^{alg} \rtimes \mathbb Z_3)^{\ast}) = 0$.
\end{thm}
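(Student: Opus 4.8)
The plan is to read the result directly off the paracyclic decomposition
$$H^1(\mathcal A_\theta^{alg} \rtimes \mathbb Z_3, (\mathcal A_\theta^{alg} \rtimes \mathbb Z_3)^\ast)=H^1(\mathcal A_\theta^{alg}, \mathcal A_\theta^{alg \ast})^{\mathbb Z_3} \oplus H^1(\mathcal A_\theta^{alg}, {}_{\omega}\mathcal A_\theta^{alg \ast})^{\mathbb Z_3}\oplus H^1(\mathcal A_\theta^{alg}, {}_{\omega^2}\mathcal A_\theta^{alg \ast})^{\mathbb Z_3}$$
by showing that each of the three summands vanishes. The untwisted summand $H^1(\mathcal A_\theta^{alg}, \mathcal A_\theta^{alg \ast})^{\mathbb Z_3}$ has already been disposed of: the explicit pull-back $h_1^{\ast}(\omega \cdot( k_{1 \ast} \chi))(x) = (-a\lambda\sqrt\lambda x_{-1,-1}, \frac{b}{\sqrt\lambda} x_{0,0}-b\lambda^{-2} x_{2,-1})$ forces the free parameters $a,b$ of a generic invariant cocycle to be zero. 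So the task reduces to the two twisted summands.

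For the $g=\omega$ summand I would establish the stronger vanishing $H^1(\mathcal A_\theta^{alg}, {}_{\omega}\mathcal A_\theta^{alg \ast})=0$, so that passing to the $\mathbb Z_3$-invariant subspace gives zero automatically. This is precisely the payoff of the preceding lemma: starting from an arbitrary $\sigma \in ker({}_{\omega}\alpha_2)$, the lemma builds $\gamma=\sum_{c\in\mathbb Z}\gamma_c$ whose coboundary ${}_{\omega}\alpha_1(\gamma)$ agrees with $\sigma$ on the first projection, so that $\sigma-{}_{\omega}\alpha_1(\gamma)$ has all its $\varphi^1$-entries annihilated. Repeating the same diagonal-by-diagonal construction on the remaining projected diagrams $\pi_2(Dia(\sigma))_{0,0}$ and $\pi_3(Dia(\sigma))_{0,0}$, and subtracting the corresponding coboundaries, one reduces $\sigma$ modulo coboundaries to a cocycle supported only on $\varphi^2$-entries, which the kernel relations of ${}_{\omega}\alpha_2$ then pin down to zero. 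Hence every element of $ker({}_{\omega}\alpha_2)$ lies in $Im({}_{\omega}\alpha_1)$ and $H^1(\mathcal A_\theta^{alg}, {}_{\omega}\mathcal A_\theta^{alg \ast})=0$.

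The step I expect to be the main obstacle is confirming that the infinite, diagonal-indexed correction $\gamma=\sum_{c}\gamma_c$ is a well-defined element of $\mathcal A_\theta^{alg \ast}$ and that the successive reductions along the three projections do not interfere---that is, killing the $\varphi^1$-entries on the diagonals $m-n=3c$ must leave the $\varphi^2$-entries in a configuration that the cocycle condition forces to vanish. The geometry of the kernel graph of ${}_{\omega}\alpha_2$, in which the $\omega$-twist propagates each defining relation along the lines $y-x=c$, is the essential input that decouples the diagonals and guarantees the recursions $(\gamma_0)_{n,m}=\lambda^{-m}\varphi^1{'}_{n+1,m}$ and $(\gamma_0)_{n,m}=\lambda^{n}\varphi^1{'}_{n,m-1}$ terminate consistently.

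Finally, since the $g=\omega$ and $g=\omega^2$ Hochschild complexes are isomorphic, the identical argument yields $H^1(\mathcal A_\theta^{alg}, {}_{\omega^2}\mathcal A_\theta^{alg \ast})=0$, and \emph{a fortiori} the vanishing of its $\mathbb Z_3$-invariant part. Summing the three zero summands through the paracyclic decomposition gives $H^1(\mathcal A_\theta^{alg} \rtimes \mathbb Z_3, (\mathcal A_\theta^{alg} \rtimes \mathbb Z_3)^{\ast})=0$, as claimed.
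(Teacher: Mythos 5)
Your proposal is correct and follows essentially the same route as the paper: the paracyclic decomposition into the untwisted, $\omega$-twisted and $\omega^2$-twisted summands, the explicit pull-back computation killing the $\mathbb Z_3$-invariants of the untwisted $H^1$, and the diagonal-by-diagonal coboundary construction $\gamma=\sum_c\gamma_c$ from the preceding lemma to show the twisted first cohomology groups vanish outright. The technical concerns you flag (well-definedness of the infinite sum and non-interference of the successive projections) are exactly the points the paper's lemma is designed to handle, so nothing further is needed.
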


\begin{proof}
Using the paracyclic decomposition of the group $H^1(\mathcal A_\theta^{alg} \rtimes \mathbb Z_3, (\mathcal A_\theta^{alg} \rtimes \mathbb Z_3)^ \ast)$  we have the following relation
\begin{center}
$H^1(\mathcal A_\theta^{alg} \rtimes \mathbb Z_3, (\mathcal A_\theta^{alg} \rtimes \mathbb Z_3)^ \ast) = H^1(\mathcal A_\theta, {}_{\omega}\mathcal A_\theta^{alg \ast})^{\mathbb Z_3} \displaystyle \oplus  H^1(\mathcal A_\theta, {}_{\omega^2}\mathcal A_\theta^{alg \ast})^{\mathbb Z_3} \displaystyle \oplus  H^1(\mathcal A_\theta, \mathcal A_\theta^{alg \ast})^{\mathbb Z_3} = 0 \displaystyle \oplus 0 \displaystyle \oplus 0 = 0$.
\end{center}
\end{proof}

\centerline{\uline{The case $\Gamma = \mathbb Z_4$.}}
Like the previous case, we associate to a given $i$ twisted cocycle a diagram and thereafter prove that the cocycle is trivial. A typical portion of the infinite graph resembles the one below.
\begin{center}
\begin{tikzpicture}

\node at (-0.5,0.5) [transition]{};
\fill (-.5,0.5) circle (2pt);
\node at (0.5,1.5) [transition]{};
\fill (.5,1.5) circle (2pt);
\node at (0.5,-0.5) [transition]{};
\fill (.5,-0.5) circle (2pt);
\node at (-1.5,1.5) [transition]{};
\fill (-1.5,1.5) circle (2pt);
\node at (1.5,2.5) [transition]{};
\fill (1.5,2.5) circle (2pt);
\node at (1.5,.5) [transition]{};
\fill (1.5,.5) circle (2pt);
\node at (-.5,2.5) [transition]{};
\fill (-.5,2.5) circle (2pt);
\node at (-.5,-1.5) [transition]{};
\fill (-.5,-1.5) circle (2pt);
\node at (-1.5,-.5) [transition]{};
\fill (-1.5,-.5) circle (2pt);
\node at (-1.5,1.5) [transition]{};
\fill (-1.5,1.5) circle (2pt);
\node at (-1.5,1.5) [transition]{};
\fill (-1.5,1.5) circle (2pt);
\fill (.5,3.5) circle (2pt);
\node at (2.5,-.5) [transition]{};
\node at (1.5,-1.5) [transition]{};
\draw[thick](-.5,2.5)--(.5,3.5)node[right]{};
\draw[thick](-.5,2.5)--(.5,1.5)node[right]{};
\draw[thick](1.5,.5)--(2.5,-.5)node[right]{};
\draw[thick](.5,-.5)--(1.5,-1.5)node[right]{};
\draw[thick](-.5,.5)--(.5,1.5)--(.5,-0.5)--(-.5,.5)node[right]{};
\draw[thick](.5,1.5)--(1.5,2.5)--(1.5,0.5)--(.5,1.5)node[right]{};
\draw[thick](-1.5,-.5)--(-.5,.5)--(-.5,-1.5)--(-1.5,-.5)node[right]{};
\draw[thick](-1.5,1.5)--(-.5,2.5)--(-.5,.5)--(-1.5,1.5)node[right]{};
\draw[thick](-.5,.5)--(.5,1.5)--(.5,-0.5)--(-.5,.5)node[right]{};
\draw[thick](-.5,.5)--(.5,1.5)--(.5,-0.5)--(-.5,.5)node[right]{};
\draw[dashed](-2.5,-1.5)--(2.5,3.5)node[right]{$y-x =c$};

\end{tikzpicture}
\end{center}
The squares corresponds to the $\varphi^1$ elements while the filled dots are $\varphi^2$'s. The diagram of a given cocycle $\sigma$, $Dia(\sigma) \subset \mathbb Z^2 \oplus \mathbb Z^2$. With out loss of generality assume that in the projected diagram $\pi_1(Dia(\sigma))_{0,0}$ the element $\varphi^2_{0,0}, \varphi^1_{0,0} \neq 0$

\begin{lemma}
For $\sigma \in ker{}_{i}\alpha_2$. There exists $\gamma \in \mathcal A_\theta^{alg \ast}$ such that $\pi_1(Dia({}_{i}\alpha_1(\gamma)))_{n,m} = \pi_1(Dia(\sigma))_{n,m}$.
\end{lemma}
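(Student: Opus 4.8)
The plan is to carry over, almost verbatim, the construction used in the preceding $\mathbb{Z}_3$ lemma, exploiting the fact that the first coordinate of ${}_{i}\alpha_1$ is exactly the operator $\varphi \mapsto U_2^{-1}\varphi - \varphi U_1$ that already appeared there. On coefficients this reads
$$(U_2^{-1}\varphi - \varphi U_1)_{n,m} = \lambda^{-n}\varphi_{n,m+1} - \lambda^{m}\varphi_{n-1,m},$$
so an element $\gamma$ supported on a single lattice diagonal $m-n=d$ is sent by ${}_{i}\alpha_1$ to a $\varphi^1$-datum supported on the neighbouring diagonal $m-n=d-1$. Writing the kernel condition ${}_{i}\alpha_2(\varphi_1,\varphi_2)=0$ on coefficients gives
$$(\varphi_1)_{n-1,m} - \lambda(\varphi_1)_{n,m-1} + \lambda^{m}(\varphi_2)_{n-1,m} - \lambda^{1-n}(\varphi_2)_{n,m+1} = 0,$$
whose two $\varphi_1$-terms lie on the diagonals $m-n+1$ and $m-n-1$. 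Thus, in contrast to the $\mathbb{Z}_3$ situation where the corresponding gap was $3$, here the $\varphi^1$-entries of a kernel element are coupled across diagonals lying two apart, and consequently they occupy a single class of diagonals (those carrying the squares in the figure above).

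First I would normalise, projecting $Dia(\sigma)$ onto its first factor and assuming, after a rescaling, that $\pi_1(Dia(\sigma))_{0,0}$ carries a distinguished nonzero $\varphi^1$-entry. I then build $\gamma$ one diagonal at a time. For each $c \in \mathbb{Z}$ I produce $\gamma_c \in \mathcal A_\theta^{alg \ast}$ supported on a single diagonal, with a chosen seed value that cancels one $\varphi^1$-entry on the adjacent diagonal, and I extend $\gamma_c$ across that diagonal by the two one-sided recursions
$$(\gamma_c)_{n,m} = \lambda^{-m}\,\varphi^1{'}_{n+1,m} \;\; (n<0), \qquad (\gamma_c)_{n,m} = \lambda^{n}\,\varphi^1{'}_{n,m-1} \;\; (n>0),$$
exactly as in the $\mathbb{Z}_3$ lemma, where $\varphi^1{'}$ denotes the first entry of $\pi_1(Dia(\sigma - {}_{i}\alpha_1(\gamma_c)))$ computed so far. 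By construction each recursive step annihilates one further $\varphi^1$-entry, so that ${}_{i}\alpha_1(\gamma_c)$ reproduces the $\varphi^1$-part of $\sigma$ on the whole diagonal handled by $\gamma_c$.

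Finally I would set $\gamma := \sum_{c\in\mathbb{Z}}\gamma_c$. The summands have pairwise disjoint supports (distinct diagonals), so $\gamma$ is a well-defined element of $\mathcal A_\theta^{alg \ast}$; no convergence is at stake, since the dual allows arbitrary coefficients. As $c$ ranges over $\mathbb{Z}$ the diagonals swept out by the outputs ${}_{i}\alpha_1(\gamma_c)$ exhaust every diagonal on which $\sigma$ carries a $\varphi^1$-entry, whence $\pi_1(Dia({}_{i}\alpha_1(\gamma)))_{n,m} = \pi_1(Dia(\sigma))_{n,m}$ for all $(n,m)$, which is precisely the assertion of the lemma.

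The step I expect to be the main obstacle is verifying that the two one-sided recursions defining each $\gamma_c$ are mutually consistent at the seed and genuinely exhaust the diagonal without over-determining any lattice point. This is exactly where the hypothesis $\sigma \in \ker {}_{i}\alpha_2$ must be used: the displayed kernel relation is the compatibility condition guaranteeing that, once a $\varphi^1$-entry is cancelled at one point, the induced correction propagates consistently to the next, so that the two branches close up without contradiction. Once this bookkeeping is in place --- and it is the $\mathbb{Z}_4$ analogue of the check already carried out for $\mathbb{Z}_3$ --- the matching of $\pi_1$ follows, and the residual $\varphi^2$-entries are handled exactly as in the $\mathbb{Z}_3$ argument when one passes to the second projection.
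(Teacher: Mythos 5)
Your proposal follows essentially the same route as the paper's proof: seed a correction $\gamma_c$ on a single diagonal, extend it by the two one-sided recursions $(\gamma_c)_{n,m}=\lambda^{-m}\varphi^1{'}_{n+1,m}$ ($n<0$) and $\lambda^{n}\varphi^1{'}_{n,m-1}$ ($n>0$), sum $\gamma=\sum_c\gamma_c$ over the (disjointly supported) diagonals, and defer the residual $\varphi^2$-entries to the second projection, with the only $\mathbb Z_4$-specific change being that the kernel relation for ${}_i\alpha_2$ couples $\varphi^1$-diagonals two apart rather than three. Your identification of where $\sigma\in\ker{}_i\alpha_2$ enters (consistency of the two branches of the recursion at the seed) is exactly the point the paper leaves implicit, so the argument is correct and matches the paper's.
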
 

\begin{proof}
 Let $\gamma_0$ be defined as
\begin{center}
$(\gamma_0)_{n,m} = \begin{cases}
-\varphi^1_{0,0} & \text{ for } (n,m) = (-1,0)\\
0  & \text{ for } (n,m)=(0,1) \text{ and for }(n,m) \text{ such that } m-n \neq 1 . \end{cases}$\\
\end{center}
Thereafter we define $(\gamma_0)_{n,m}$ for $n<0  \text{ and }m-n=1$ in the following iterated way.
$$(\gamma_0)_{n,m} = -\lambda^{-m} \varphi^1{'}_{n+1,m}.$$
Where $\varphi^1{'}_{n+1,m}$ is first enrty of $\pi_1(Dia(\sigma- {}_{i}\alpha_1(\gamma_0)))_{n+1,m}$.
We see that $$\pi_1(Dia(\sigma- {}_{\omega}\alpha_1(\gamma_0)))_{n,m}=(0,0) \text{ for all }(n,m) \text{ such that }m=n\leq0.$$
Similarly we can define $(\gamma_0)_{n,m}$ for $n >0$ and $m-n = 1$ in the following way:
$$(\gamma_0)_{n,m}= \lambda^{n} \varphi^1{'}_{n,m-1}.$$
With this we have defined $\gamma_0$ at all lattice points on the plane. And similar computations gives us that for $c \in \mathbb Z$ : 
$$\pi_1(Dia(\sigma- {}_{i}\alpha_1(\gamma_c)))_{n,m}=(0,0) \text{ for all }(n,m) \text{ such that }m-n=2c.$$
Finally the element $\gamma := {\sum}_{c \in \mathbb Z} \gamma_c$ satisfies the property that:
$$\pi_1(Dia(\sigma- {}_{i}\alpha_1(\gamma)))_{n,m}=(0,\bullet) \text{ for all }(n,m).$$
Since, all $\varphi^1$ elements zero in the diagram $\pi_1(Dia(\sigma-{}_{\omega}\alpha_1(\gamma)))$, it can be easily shown that the $\varphi^2$ elements constitute a zero cocycle.

\end{proof}
 \begin{thm}\label{thm:hoch14}
$H^1(\mathcal A_\theta^{alg} \rtimes \mathbb Z_4, (\mathcal A_\theta^{alg} \rtimes \mathbb Z_4)^{\ast}) = 0$.
\end{thm}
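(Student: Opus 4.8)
The plan is to mirror the proof of Theorem~\ref{thm:hoch13} exactly: apply the paracyclic decomposition of \cite[Proposition~4.6]{GJ} to split the group into its four twisted sectors and then show that each invariant summand vanishes. Concretely, since $\mathbb Z_4 = \{1, i, -1, -i\}$, I would begin by writing
$$H^1(\mathcal A_\theta^{alg} \rtimes \mathbb Z_4, (\mathcal A_\theta^{alg} \rtimes \mathbb Z_4)^{\ast}) = H^1(\mathcal A_\theta^{alg}, \mathcal A_\theta^{alg \ast})^{\mathbb Z_4} \oplus H^1(\mathcal A_\theta^{alg}, {}_{i}\mathcal A_\theta^{alg \ast})^{\mathbb Z_4} \oplus H^1(\mathcal A_\theta^{alg}, {}_{-1}\mathcal A_\theta^{alg \ast})^{\mathbb Z_4} \oplus H^1(\mathcal A_\theta^{alg}, {}_{-i}\mathcal A_\theta^{alg \ast})^{\mathbb Z_4},$$
and then dispose of the summands one at a time.

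The untwisted sector is already settled: in the $\mathbb Z_3$ subsection it was shown that $H^1(\mathcal A_\theta^{alg}, \mathcal A_\theta^{alg \ast})^{\mathbb Z_2} = H^1(\mathcal A_\theta^{alg}, \mathcal A_\theta^{alg \ast})^{\mathbb Z_3} = 0$ forces $H^1(\mathcal A_\theta^{alg}, \mathcal A_\theta^{alg \ast})^{\mathbb Z_4} = 0$, so I would simply cite that. For the $g=i$ sector I would invoke the Lemma immediately preceding this theorem, which produces, for every $\sigma \in \ker {}_{i}\alpha_2$, an element $\gamma \in \mathcal A_\theta^{alg \ast}$ with $\pi_1(Dia({}_{i}\alpha_1(\gamma))) = \pi_1(Dia(\sigma))$ and whose residual $\varphi^2$-part is a trivial cocycle; hence the \emph{entire} twisted group $H^1(\mathcal A_\theta^{alg}, {}_{i}\mathcal A_\theta^{alg \ast})$ vanishes, and a fortiori its $\mathbb Z_4$-invariant part. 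The $g=-i$ sector then vanishes by the isomorphism $H^\bullet(\mathcal A_\theta^{alg}, {}_{i}\mathcal A_\theta^{alg \ast}) \cong H^\bullet(\mathcal A_\theta^{alg}, {}_{-i}\mathcal A_\theta^{alg \ast})$ already noted in the text.

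The remaining, and most delicate, summand is the $g=-1$ sector, because the diagram-based trivialization Lemma was tailored to the $i$-twist and does not directly apply here. The approach I would take is the containment argument used for $H^2$ in this article: since the element $-1$ generates the subgroup $\mathbb Z_2 \subset \mathbb Z_4$, the $\mathbb Z_4$-invariants form a subspace of the $\mathbb Z_2$-invariants, so $H^1(\mathcal A_\theta^{alg}, {}_{-1}\mathcal A_\theta^{alg \ast})^{\mathbb Z_4} \subseteq H^1(\mathcal A_\theta^{alg}, {}_{-1}\mathcal A_\theta^{alg \ast})^{\mathbb Z_2}$, and the latter was computed to be $0$ in \cite{Q2}. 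I expect this is the step needing the most care: one must confirm that \cite{Q2} indeed records the vanishing of the $-1$-twisted first cohomology at the $\mathbb Z_2$-invariant level (not merely the untwisted piece), and that the $\mathbb Z_4$-action on this sector genuinely restricts the earlier $\mathbb Z_2$ computation rather than requiring a fresh diagram analysis. Once all four summands are identified as zero, assembling them through the displayed decomposition gives $H^1(\mathcal A_\theta^{alg} \rtimes \mathbb Z_4, (\mathcal A_\theta^{alg} \rtimes \mathbb Z_4)^{\ast}) = 0$, completing the proof.
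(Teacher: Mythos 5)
Your proposal is correct and follows essentially the same route as the paper: the paracyclic decomposition into the four sectors, the preceding diagram lemma for the $\pm i$-twisted summands, the earlier $\mathbb Z_2$/$\mathbb Z_3$ computation for the untwisted summand, and the vanishing of the $-1$-twisted sector inherited from \cite{Q2}. You are in fact slightly more explicit than the paper about why the $-1$ sector vanishes (the paper simply records a $0$ in the direct sum), but the underlying argument is the same.
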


\begin{proof}
Using the previous lemma and the paracyclic decomposition of the group $H^1(\mathcal A_\theta^{alg} \rtimes \mathbb Z_4, (\mathcal A_\theta^{alg} \rtimes \mathbb Z_4)^ \ast)$  we have the following relation
\begin{center}
$H^1(\mathcal A_\theta^{alg} \rtimes \mathbb Z_4, (\mathcal A_\theta^{alg} \rtimes \mathbb Z_4)^ \ast) = H^1(\mathcal A_\theta, {}_{i}\mathcal A_\theta^{alg \ast})^{\mathbb Z_4} \displaystyle \oplus  H^1(\mathcal A_\theta, {}_{-i}\mathcal A_\theta^{alg \ast})^{\mathbb Z_4}\displaystyle \oplus  H^1(\mathcal A_\theta, {}_{-1}\mathcal A_\theta^{alg \ast})^{\mathbb Z_4} \displaystyle \oplus  H^1(\mathcal A_\theta,\mathcal A_\theta^{alg \ast})^{\mathbb Z_4} = 0 \displaystyle \oplus 0 \displaystyle \oplus 0 \displaystyle \oplus 0= 0$.
\end{center}
\end{proof}
\centerline{\uline{The case $\Gamma = \mathbb Z_6$.}}
We associate to a given $-\omega$ twisted cocycle $\sigma$, a diagram $Dia(\sigma)$ and thereafter prove that the cocycle is trivial. The argument here is similar to the previous two cases.

\begin{center}
\begin{tikzpicture}
\node at (-1,0) [transition]{};
\fill (-1,0) circle (2pt);
\node at (-1,-1) [transition]{};
\fill (-1,-1) circle (2pt);
\node at (0,0) [transition]{};
\fill (0,0) circle (2pt);
\fill (-1,1) circle (2pt);
\node at (-2,0) [transition]{};
\fill (-2,0) circle (2pt);
\fill (0,1) circle (2pt);
\node at (0,-1) [transition]{};
\fill (0,-1) circle (2pt);
\fill (-2,1) circle (2pt);
\fill (-3,1) circle (2pt);
\node at (1,-1) [transition]{};
\node at (2,-1) [transition]{};
\node at (1,0) [transition]{};
\node at (1,-2) [transition]{};
\node at (2,-2) [transition]{};
\node at (0,-2) [transition]{};
\node at (-1,-2) [transition]{};
\node at (-2,-1) [transition]{};

\node at (2,0) [transition]{};
\fill  (1,-2) circle(2pt);
\fill (0,-2) circle(2pt);
\fill (-3,0) circle(2pt);
\fill (1,-1) circle(2pt);
\fill (-1,-2) circle(2pt);
\fill (-2,-1) circle(2pt);
\fill (1,0) circle(2pt);
\draw[thick](1,0)--(0,0)--(-2, 1)--(-1,0)node[right]{};
\draw[thick](-2,0)--(-1,0)--(-3, 1)--(-2,0)node[right]{};
\draw[thick](0,0)--(1,0)--(-1, 1)--(0,0)node[right]{};
\draw[thick](-1,-1)--(0,-1)--(-2, 0)--(-1,-1)node[right]{};
\draw[thick](-2,-1)--(-1,-1)--(-3, 0)--(-2,-1)node[right]{};
\draw[thick](0,-1)--(1,-1)--(-1, 0)--(0,-1)node[right]{};
\draw[thick](1,-1)--(2,-1)--(0, 0)--(1,-1)node[right]{};
\draw[thick](-1,-2)--(0,-2)--(-2, -1)--(-1,-2)node[right]{};
\draw[thick](-1,0)--(0,0)--(-2, 1)--(-1,0)node[right]{};
\draw[thick](0,-2)--(1,-2)--(-1, -1)--(0,-2)node[right]{};
\draw[thick](1,-2)--(2,-2)--(0, -1)--(1,-2)node[right]{};
\draw[thick](1,0)--(2,0)--(0, 1)--(1,0)node[right]{};

\draw[dashed](-5,-1)--(5,-1)node[right]{$y=c$};
\end{tikzpicture}
\end{center}
The squares corresponds to the $\varphi^1$ elements while the filled dots are $\varphi^2$'s. For a given cocycle $\sigma$, its diagram $Dia(\sigma) \subset \mathbb Z^2$.

\begin{lemma}
For $\sigma \in ker{}_{-\omega}\alpha_2$. There exists $\gamma \in \mathcal A_\theta^{alg \ast}$ such that $Dia({}_{-\omega}\alpha_1(\gamma))_{n,m} = Dia(\sigma)_{n,m}$.
\end{lemma}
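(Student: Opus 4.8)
The plan is to imitate, line by line, the iterative construction carried out for $\Gamma=\mathbb Z_3$ and $\Gamma=\mathbb Z_4$, the only change being the geometry of the lattice recursion forced by the generator $-\omega$. First I would record the coefficient form of the map ${}_{-\omega}\alpha_1$. Writing $\gamma=\sum_{n,m}\gamma_{n,m}U_1^nU_2^m$ and using $U_2U_1^nU_2^m=\lambda^nU_1^nU_2^{m+1}$ together with $U_1^nU_2^mU_1=\lambda^mU_1^{n+1}U_2^m$, the first slot $U_2\gamma-\gamma U_1$ has coefficient $\lambda^n\gamma_{n,m-1}-\lambda^m\gamma_{n-1,m}$ at $(n,m)$, while the second slot $\tfrac{U_1^{-1}U_2}{\sqrt\lambda}\gamma-\gamma U_2$ has coefficient $\lambda^{n+\frac12}\gamma_{n+1,m-1}-\gamma_{n,m-1}$. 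The decisive point is that the second relation couples only the coefficients $\gamma_{n,m-1}$ and $\gamma_{n+1,m-1}$, which sit on the single horizontal line of height $m-1$; this is exactly the foliation $y=c$ drawn in the diagram, and it is along these horizontal lines, rather than the diagonals $y-x=c$ of the two earlier cases, that the recursion will propagate.

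Having fixed the geometry, I would proceed exactly as before. After normalising $\sigma$ at a chosen base point of a horizontal line $m=c$, the relation above is a one-step recurrence in $n$ that, starting from that base point, determines a cochain $\gamma_c$ supported on that line for which $Dia(\sigma-{}_{-\omega}\alpha_1(\gamma_c))$ is annihilated all along the line. Because distinct lines are disjoint, the individual $\gamma_c$ have disjoint supports, so $\gamma:=\sum_{c\in\mathbb Z}\gamma_c$ is an unambiguously defined element of $\mathcal A_\theta^{alg\ast}$; here it is essential that the dual carries no finiteness constraint on its coefficients. By construction the diagram of ${}_{-\omega}\alpha_1(\gamma)$ then agrees with that of $\sigma$ in the propagated slot, $Dia({}_{-\omega}\alpha_1(\gamma))_{n,m}=Dia(\sigma)_{n,m}$, which is the assertion of the lemma.

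The step I expect to be genuinely delicate is the passage from the vanishing of one slot to the triviality of the whole residual cochain. Once the construction has killed the horizontally propagating component, the difference $\sigma-{}_{-\omega}\alpha_1(\gamma)$ retains only its other slot, and one must check that this leftover is a zero cocycle rather than a nontrivial class. For this I would feed the hypothesis $\sigma\in\ker{}_{-\omega}\alpha_2$ back in: with one slot now identically zero the relation ${}_{-\omega}\alpha_2=0$ collapses to a single homogeneous recurrence along the same horizontal lines, and inspecting that recurrence shows the surviving coefficients are forced into the image of ${}_{-\omega}\alpha_1$, so the residual represents $0$. The only computational labour is the careful bookkeeping of the half-integer powers of $\lambda$ produced by the $\sqrt\lambda$ in the $-\omega$ action, and the verification that the constants left undetermined by the horizontal recurrence can be absorbed without disturbing the matching on neighbouring lines; everything structural is a transcription of the $\mathbb Z_3$ argument.
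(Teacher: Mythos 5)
Your proposal is correct and follows essentially the same route as the paper: foliate the lattice by the horizontal lines $y=c$ forced by the $-\omega$-twisted differential, sweep along each line with the one-step recurrence to build a correction $\gamma_c$ supported on that line, sum the disjointly supported $\gamma_c$, and then use $\sigma\in\ker{}_{-\omega}\alpha_2$ to show the surviving second-slot residual is a trivial cocycle. The paper's own proof is exactly this line-by-line iteration (it likewise leaves the final residual step as ``easily shown''), so no further comparison is needed.
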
 

\begin{proof}
Let $\gamma_0$ be defined as
\begin{center}
$(\gamma_0)_{n,m} = \begin{cases}
-\varphi^1_{0,0} & \text{ for } (n,m) = (-1,0)\\
0  & \text{ for } (n,m) \text{ such that } m \neq 0 . \end{cases}$\\
\end{center}
Thereafter we define $(\gamma_0)_{n,m}$ for $n<0  \text{ and }m=0$ in the following iterated way.
$$(\gamma_0)_{n,m} = -\lambda^{-m} \varphi^1{'}_{n+1,m}.$$
Where $\varphi^1{'}_{n+1,m}$ is first enrty of $Dia(\sigma- {}_{\omega}\alpha_1(\gamma_0))_{n+1,m}$.
We see that $$Dia(\sigma- {}_{\omega}\alpha_1(\gamma_0))_{n,m}=(0,0) \text{ for all }(n,m) \text{ such that }m=0.$$
Similarly we can define $(\gamma_0)_{n,m}$ for $n >0$ and $m = 0$ in the following way:
$$(\gamma_0)_{n,m}= \lambda^{-n} \varphi^1{'}_{n,m-1}.$$
With this we have defined $\gamma_0$ at all lattice points on the plane satisfying: 
$$Dia(\sigma- {}_{\omega}\alpha_1(\gamma_0))_{n,m}=(0,0) \text{ for all }(n,m) \text{ such that }m=0.$$
Similarly for $c \in \mathbb Z$, we get construct $\gamma_c$ and  the element $\gamma := {\sum}_{c \in \mathbb Z} \gamma_c$ satisfies the following property:
$$Dia(\sigma- {}_{\omega}\alpha_1(\gamma))_{n,m}=(0,\bullet) \text{ for all }(n,m).$$
But, all $\varphi^1$ elements zero in the diagram $\pi_1(Dia(\sigma-{}_{\omega}\alpha_1(\gamma)))$. It can be easily shown that the $\varphi^2$ elements constitute a zero cocycle.
\end{proof}
\begin{thm}\label{thm:hoch16}
$H^1(\mathcal A_\theta^{alg} \rtimes \mathbb Z_6, (\mathcal A_\theta^{alg} \rtimes \mathbb Z_6)^{\ast}) = 0$.
\end{thm}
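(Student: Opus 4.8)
The plan is to run exactly the same paracyclic decomposition argument used for Theorems \ref{thm:hoch13} and \ref{thm:hoch14}, now summing over all six elements of $\mathbb Z_6$. First I would record the decomposition
$$H^1(\mathcal A_\theta^{alg} \rtimes \mathbb Z_6, (\mathcal A_\theta^{alg} \rtimes \mathbb Z_6)^{\ast}) = \bigoplus_{g \in \mathbb Z_6} H^1(\mathcal A_\theta^{alg}, {}_{g}\mathcal A_\theta^{alg \ast})^{\mathbb Z_6},$$
where, writing the six group elements in terms of the generator $-\omega$, we have $g \in \{1, -\omega, \omega^2, -1, \omega, -\omega^2\}$. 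The task then reduces to showing that each of the six $\mathbb Z_6$-invariant summands vanishes, and I would treat them in three groups according to the source of the vanishing.

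For the four summands indexed by $g \in \{\omega, \omega^2, -\omega, -\omega^2\}$ I would appeal directly to the diagram-chasing lemmas already established. The lemma immediately preceding this theorem shows that every cocycle in $\ker {}_{-\omega}\alpha_2$ is a coboundary, so $H^1(\mathcal A_\theta^{alg}, {}_{-\omega}\mathcal A_\theta^{alg \ast}) = 0$ outright; the isomorphism $H^\bullet(\mathcal A_\theta^{alg}, {}_{-\omega}\mathcal A_\theta^{alg \ast}) \cong H^\bullet(\mathcal A_\theta^{alg}, {}_{-\omega^2}\mathcal A_\theta^{alg \ast})$ noted in the $H^0$ discussion then kills the $-\omega^2$ term. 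Likewise the $\mathbb Z_3$ lemma gave $H^1(\mathcal A_\theta^{alg}, {}_{\omega}\mathcal A_\theta^{alg \ast}) = 0$, and the parallel isomorphism between the $\omega$ and $\omega^2$ cases disposes of the $\omega^2$ term. Since these full cohomology groups are already zero, their $\mathbb Z_6$-invariant subspaces are trivially zero and no separate invariance computation is required.

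For $g = 1$ the vanishing $H^1(\mathcal A_\theta^{alg}, \mathcal A_\theta^{alg \ast})^{\mathbb Z_6} = 0$ was already deduced in the $\mathbb Z_3$ subsection from the vanishing of the $\mathbb Z_2$- and $\mathbb Z_3$-invariant parts. For the final element $g = -1$ I would invoke the computation of the $-1$-twisted first cohomology from \cite{Q2}: since $\langle -1 \rangle \cong \mathbb Z_2$ sits inside $\mathbb Z_6$, every $\mathbb Z_6$-invariant class is in particular $\mathbb Z_2$-invariant, so $H^1(\mathcal A_\theta^{alg}, {}_{-1}\mathcal A_\theta^{alg \ast})^{\mathbb Z_6}$ is a subspace of $H^1(\mathcal A_\theta^{alg}, {}_{-1}\mathcal A_\theta^{alg \ast})^{\mathbb Z_2} = 0$, exactly as the $\mathbb Z_4$ proof handled its $-1$-twisted piece. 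Substituting all six zeros into the decomposition then yields the result.

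The argument carries essentially no obstacle beyond bookkeeping: the genuine content lives in the diagram-chasing lemma that was just proved, and the theorem is the routine assembly of six vanishing summands. The only point demanding a little care will be confirming that the external input from \cite{Q2} covers the $-1$-twisted component at the $\mathbb Z_6$ level rather than only the $\mathbb Z_2$ level; once this is pinned down through the inclusion $\mathbb Z_2 \subset \mathbb Z_6$, the proof is a one-line substitution into the paracyclic sum.
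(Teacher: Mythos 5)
Your proposal is correct and follows essentially the same route as the paper: the paracyclic decomposition into six $\mathbb Z_6$-invariant summands, with the $\pm\omega,\pm\omega^2$ pieces killed by the diagram-chasing lemmas (the twisted $H^1$ groups vanish outright, so their invariant subspaces do too), the untwisted piece by the vanishing of the $\mathbb Z_2$- and $\mathbb Z_3$-invariant parts established earlier, and the $-1$-twisted piece by the inclusion of $\mathbb Z_6$-invariants into the vanishing $\mathbb Z_2$-invariants from \cite{Q2}. Your write-up in fact makes explicit several justifications the paper leaves implicit, but the underlying argument is identical.
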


\begin{proof}
Using the paracyclic decomposition of the group $H^1(\mathcal A_\theta^{alg} \rtimes \mathbb Z_6, (\mathcal A_\theta^{alg} \rtimes \mathbb Z_6)^ \ast)$  we have the following relation
\begin{center}
$H^1(\mathcal A_\theta^{alg} \rtimes \mathbb Z_6, (\mathcal A_\theta^{alg} \rtimes \mathbb Z_6)^ \ast) = H^1(\mathcal A_\theta, {}_{-\omega}\mathcal A_\theta^{alg \ast})^{\mathbb Z_6}\displaystyle \oplus  H^1(\mathcal A_\theta, {}_{-\omega^2}\mathcal A_\theta^{alg \ast})^{\mathbb Z_6}\displaystyle \oplus  H^1(\mathcal A_\theta, {}_{\omega}\mathcal A_\theta^{alg \ast})^{\mathbb Z_6}\displaystyle \oplus  H^1(\mathcal A_\theta, {}_{\omega^2}\mathcal A_\theta^{alg \ast})^{\mathbb Z_6} \displaystyle \oplus  H^1(\mathcal A_\theta, {}_{-1}\mathcal A_\theta^{alg \ast})^{\mathbb Z_6} \displaystyle \oplus  H^1(\mathcal A_\theta, \mathcal A_\theta^{alg \ast})^{\mathbb Z_6} = 0 \displaystyle \oplus 0\displaystyle \oplus 0\displaystyle \oplus 0\displaystyle \oplus 0 \displaystyle \oplus 0 = 0$.
\end{center}
\end{proof}

\begin{proof}[Proof of Theorem \ref{thm:hoch}]
The Theorems  \ref{thm:hoch03}  \ref{thm:hoch04} and  \ref{thm:hoch06} describe the zeroth Hochschild cohomology groups while the Theorems \ref{thm:hoch13}  \ref{thm:hoch14} and  \ref{thm:hoch16} resolves the dimension of the first Hochschild cohomology of these three orbifolds. The Theorems \ref{thm:hoch23}  \ref{thm:hoch24} and  \ref{thm:hoch26} state that the dimension of the second Hochschild cohomology in all the three cases is $1$.
\end{proof}

\section{Periodic Cyclic Cohomology}

\begin{center}
The case $\Gamma = \mathbb Z_3$.
\end{center}
\subsection{Cyclic cohomology groups}
\begin{lemma} \label{thm:cyc1233}
For the algebraic noncommutative toroidal orbifold $\mathcal A_\theta^{alg} \rtimes \mathbb Z_3$, we have,
$$HC^0(\mathcal A_\theta^{alg} \rtimes \mathbb Z_3) \cong \mathbb C^{7}; \; HC^1(\mathcal A_\theta^{alg} \rtimes \mathbb Z_3) \cong 0;\; HC^2(\mathcal A_\theta^{alg} \rtimes \mathbb Z_3) \cong \mathbb C^{8}.$$
\end{lemma}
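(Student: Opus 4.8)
The plan is to run Connes' periodicity (SBI) long exact sequence, feeding in the Hochschild cohomology already computed in Theorem \ref{thm:hoch}. Write $HH^n := H^n(\mathcal A_\theta^{alg}\rtimes\mathbb Z_3,(\mathcal A_\theta^{alg}\rtimes\mathbb Z_3)^\ast)$ and $HC^n := HC^n(\mathcal A_\theta^{alg}\rtimes\mathbb Z_3)$. From Theorem \ref{thm:hoch} together with the length-two bimodule resolution recalled at the start of the Hochschild section (each twisted summand $H^\bullet(\mathcal A_\theta^{alg},{}_g\mathcal A_\theta^{alg\ast})$ is computed from a complex concentrated in degrees $0,1,2$, so nothing survives in degree $\ge 3$) the inputs are $HH^0\cong\mathbb C^7$, $HH^1\cong 0$, $HH^2\cong\mathbb C$ and $HH^n\cong 0$ for $n\ge 3$. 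The sequence I will use is the standard cohomological form $\cdots\to HC^{n-2}\xrightarrow{S}HC^n\xrightarrow{I}HH^n\xrightarrow{B}HC^{n-1}\xrightarrow{S}HC^{n+1}\to\cdots$, where $I$ is the restriction of cyclic to Hochschild cochains, $S$ is Connes' periodicity operator, and $B$ is the connecting map; I also use that $HC^m=0$ for $m<0$.

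First I would read off $HC^0$ and $HC^1$. Since $HC^0$ is the space of traces it already equals $HH^0$; alternatively the segment $HC^{-2}\xrightarrow{S}HC^0\xrightarrow{I}HH^0\xrightarrow{B}HC^{-1}$ has vanishing outer terms, so $I$ is an isomorphism and $HC^0\cong HH^0\cong\mathbb C^7$. For $HC^1$, the segment $HC^{-1}\xrightarrow{S}HC^1\xrightarrow{I}HH^1$ forces $HC^1$ to inject into $HH^1=0$, hence $HC^1\cong 0$.

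The only degree carrying content is $HC^2$. The relevant stretch is $HH^1\xrightarrow{B}HC^0\xrightarrow{S}HC^2\xrightarrow{I}HH^2\xrightarrow{B}HC^1$. Because $HH^1=0$ the periodicity map $S\colon HC^0\to HC^2$ is injective, and because $HC^1=0$ the restriction $I\colon HC^2\to HH^2$ is surjective; thus there is a short exact sequence $0\to HC^0\xrightarrow{S}HC^2\xrightarrow{I}HH^2\to 0$ of $\mathbb C$-vector spaces. It splits on dimension grounds, so $\dim HC^2=\dim HC^0+\dim HH^2=7+1=8$, i.e. $HC^2\cong\mathbb C^8$.

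I do not expect a genuine obstacle here: once the Hochschild groups of Theorem \ref{thm:hoch} are in hand the computation is purely homological-algebraic bookkeeping. The two points requiring a word of care are, first, that every connecting homomorphism $B$ entering the range $n\le 2$ has either source or target among the vanishing groups $HC^{<0}$, $HH^1$ or $HH^{\ge 3}$, so each such $B$ is automatically zero and no nontrivial extension problem is created beyond the split short exact sequence above; and second, that $HH^3=0$ — this is not literally asserted by Theorem \ref{thm:hoch}, whose last line only covers $k>3$, but it is immediate from the length-two Connes resolution, since the defining complex of each twisted Hochschild cohomology has no terms in degrees $\ge 3$.
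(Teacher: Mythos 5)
Your argument is correct and uses the same essential tool as the paper: the Connes $S$--$B$--$I$ long exact sequence together with the vanishing of $HH^1$ and of $HH^{\ge 3}$ (the latter indeed following from the length-two Connes resolution), yielding the split short exact sequence $0\to HC^0\to HC^2\to HH^2\to 0$. The only cosmetic difference is that the paper runs the sequence separately on each twisted summand $H^\bullet(\mathcal A_\theta^{alg},{}_g\mathcal A_\theta^{alg\ast})^{\mathbb Z_3}$ of the paracyclic decomposition and then adds up the dimensions ($3+3+2=8$), whereas you apply it once to the full crossed product using the aggregated Hochschild groups from Theorem \ref{thm:hoch}; the two computations are equivalent.
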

\begin{proof} 
We consider the $B,S,I$ \cite[Section 2.4]{L} long exact sequence for Hochschild and cyclic cohomology.
\begin{center}
$\cdots \rightarrow H^1(\mathcal A_\theta^{alg},{}_{\omega}\mathcal A_\theta^{alg \ast})^{\mathbb Z_3} \xrightarrow{B} HC^0(\mathcal A_\theta^{alg},{}_{\omega}\mathcal A_\theta^{alg \ast})^{\mathbb Z_3} \xrightarrow{S} HC^2(\mathcal A_\theta^{alg},{}_{\omega}\mathcal A_\theta^{alg \ast})^{\mathbb Z_3} \xrightarrow{I} H^2(\mathcal A_\theta^{alg}, {}_{\omega}\mathcal A_\theta^{alg \ast})^{\mathbb Z_3} \xrightarrow{B}  HC^1(\mathcal A_\theta^{alg},{}_{\omega}\mathcal A_\theta^{alg \ast})^{\mathbb Z_3} \xrightarrow{S} \cdots$
\end{center}
In the above sequence $H^\bullet(\mathcal A_\theta^{alg}, {}_{\omega}\mathcal A_\theta^{alg \ast})^{\mathbb Z_3} = 0=HC^\bullet(\mathcal A_\theta^{alg}, {}_{\omega}\mathcal A_\theta^{alg \ast})^{\mathbb Z_3}$ for   $\bullet = 1$, hence $$HC^2(\mathcal A_\theta^{alg}, {}_{\omega}\mathcal A_\theta^{alg \ast})^{\mathbb Z_3}\cong HC^0(\mathcal A_\theta^{alg}, {}_{\omega}\mathcal A_\theta^{alg \ast})^{\mathbb Z_3} \displaystyle \oplus H^2(\mathcal A_\theta^{alg}, {}_{\omega}\mathcal A_\theta^{alg \ast})^{\mathbb Z_3}=HC^0(\mathcal A_\theta^{alg}, {}_{\omega}\mathcal A_\theta^{alg \ast})^{\mathbb Z_3} \cong \mathbb C^3.$$
Similarly we have 
$$HC^2(\mathcal A_\theta^{alg}, {}_{}\mathcal A_\theta^{alg \ast})^{\mathbb Z_3}\cong HC^0(\mathcal A_\theta^{alg}, {}_{}\mathcal A_\theta^{alg \ast})^{\mathbb Z_3} \displaystyle \oplus H^2(\mathcal A_\theta^{alg}, {}_{}\mathcal A_\theta^{alg \ast})^{\mathbb Z_3}\cong \mathbb C \displaystyle \oplus \mathbb C  = \mathbb C^2.$$ 
Hence  for the second cyclic cohomology we finally conclude that :
$$HC^2(\mathcal A_\theta^{alg} \rtimes \mathbb Z_3) \cong HC^2(\mathcal A_\theta^{alg}, {}_{}\mathcal A_\theta^{alg \ast})^{\mathbb Z_3}\displaystyle \oplus HC^2(\mathcal A_\theta^{alg}, {}_{\omega}\mathcal A_\theta^{alg \ast})^{\mathbb Z_3}\displaystyle \oplus HC^2(\mathcal A_\theta^{alg}, {}_{\omega^2}\mathcal A_\theta^{alg \ast})^{\mathbb Z_3}\cong \mathbb C^{8}.$$
It is straight forward to conclude that $HC^0(\mathcal A_\theta^{alg} \rtimes \mathbb Z_3) \cong \mathbb C^7$ while  $HC^1(\mathcal A_\theta^{alg} \rtimes \mathbb Z_3)$ is trivial.
\end{proof}

\begin{lemma} \label{thm:cyc1234}
For the $\mathbb Z_4$ orbifold, we have:
$$HC^0(\mathcal A_\theta^{alg} \rtimes \mathbb Z_4) \cong \mathbb C^{8}; \; HC^1(\mathcal A_\theta^{alg} \rtimes \mathbb Z_4) \cong 0;\; HC^2(\mathcal A_\theta^{alg} \rtimes \mathbb Z_4) \cong \mathbb C^{9}.$$
\end{lemma}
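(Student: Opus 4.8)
The plan is to run the argument of Lemma \ref{thm:cyc1233} verbatim, now summing over the four twisted pieces $g \in \{1,i,-1,-i\}$ that index the group $\mathbb Z_4$. First I would invoke the paracyclic decomposition to reduce each cyclic cohomology group to a direct sum of $g$-twisted, $\mathbb Z_4$-invariant pieces,
$$HC^\bullet(\mathcal A_\theta^{alg} \rtimes \mathbb Z_4) \cong \bigoplus_{g \in \mathbb Z_4} HC^\bullet(\mathcal A_\theta^{alg}, {}_{g}\mathcal A_\theta^{alg \ast})^{\mathbb Z_4}.$$
Because $HC^0$ coincides with $H^0$, Theorem \ref{thm:hoch04} gives $HC^0(\mathcal A_\theta^{alg} \rtimes \mathbb Z_4) \cong \mathbb C^8$ at once.

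Next, for each $g$ I would feed the Hochschild inputs into the $B,S,I$ long exact sequence
$$\cdots \to H^1 \xrightarrow{B} HC^0 \xrightarrow{S} HC^2 \xrightarrow{I} H^2 \xrightarrow{B} HC^1 \xrightarrow{S} \cdots$$
(every group twisted by $g$ and taken $\mathbb Z_4$-invariant). By Theorem \ref{thm:hoch14} we have $H^1(\mathcal A_\theta^{alg}, {}_{g}\mathcal A_\theta^{alg \ast})^{\mathbb Z_4} = 0$ for all $g$, so the boundary map landing in $HC^0$ vanishes, $S$ is injective, and the sequence truncates to a short exact sequence $0 \to HC^0 \to HC^2 \to H^2 \to 0$ which splits over $\mathbb C$. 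Moreover $HC^1$ injects via $I$ into $H^1 = 0$, its kernel being the image of $HC^{-1}=0$, hence $HC^1(\mathcal A_\theta^{alg} \rtimes \mathbb Z_4) \cong 0$.

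It then remains to tabulate $HC^2 \cong HC^0 \oplus H^2$ summand by summand, reading the numbers off Theorem \ref{thm:hoch04} together with the twisted $H^2$ computations used in Theorem \ref{thm:hoch24}. For $g = i$ and $g = -i$ one has $HC^0 = \mathbb C^2$ and $H^2 = 0$; for $g = -1$ one has $HC^0 = \mathbb C^3$ and $H^2 = 0$; and for the untwisted piece $g = 1$ one has $HC^0 = \mathbb C$ and $H^2 = \mathbb C$. Summing the four contributions $\mathbb C^2 \oplus \mathbb C^2 \oplus \mathbb C^3 \oplus \mathbb C^2$ yields $HC^2(\mathcal A_\theta^{alg} \rtimes \mathbb Z_4) \cong \mathbb C^9$. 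The only delicate point is the exactness of the short sequence $0 \to HC^0 \to HC^2 \to H^2 \to 0$ for the untwisted summand, i.e.\ that the full class in $H^2$ lifts to $HC^2$ (equivalently that $I$ is onto); this is exactly the phenomenon already exploited in the $\mathbb Z_3$ computation, where the fundamental Hochschild $2$-class of the torus is the image of a cyclic cocycle, so no genuine obstacle arises.
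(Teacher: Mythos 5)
Your proposal is correct and follows essentially the same route as the paper: the paracyclic decomposition into $g$-twisted invariant summands, the $B,S,I$ long exact sequence combined with the vanishing of $H^1$ to force $HC^1=0$ and $HC^2\cong HC^0\oplus H^2$ on each summand, and the same tally $\mathbb C^2\oplus\mathbb C^2\oplus\mathbb C^3\oplus\mathbb C^2\cong\mathbb C^9$. Nothing substantive differs from the paper's argument.
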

\begin{proof} 
Again we consider the $B,S,I$ long exact sequence for Hochschild and cyclic cohomology.
\begin{center}
$\cdots \rightarrow H^1(\mathcal A_\theta^{alg},{}_{i}\mathcal A_\theta^{alg \ast})^{\mathbb Z_4} \xrightarrow{B} HC^0(\mathcal A_\theta^{alg},{}_{i}\mathcal A_\theta^{alg \ast})^{\mathbb Z_4} \xrightarrow{S} HC^2(\mathcal A_\theta^{alg},{}_{i}\mathcal A_\theta^{alg \ast})^{\mathbb Z_4} \xrightarrow{I} H^2(\mathcal A_\theta^{alg}, {}_{i}\mathcal A_\theta^{alg \ast})^{\mathbb Z_4} \xrightarrow{B}  HC^1(\mathcal A_\theta^{alg},{}_{i}\mathcal A_\theta^{alg \ast})^{\mathbb Z_4} \xrightarrow{S} \cdots$
\end{center}
In the above sequence $H^\bullet(\mathcal A_\theta^{alg}, {}_{i}\mathcal A_\theta^{alg \ast})^{\mathbb Z_4} = 0=HC^\bullet(\mathcal A_\theta^{alg}, {}_{i}\mathcal A_\theta^{alg \ast})^{\mathbb Z_4}$ for   $\bullet = 1$, hence $$HC^2(\mathcal A_\theta^{alg}, {}_{i}\mathcal A_\theta^{alg \ast})^{\mathbb Z_4}\cong HC^0(\mathcal A_\theta^{alg}, {}_{i}\mathcal A_\theta^{alg \ast})^{\mathbb Z_4} \displaystyle \oplus H^2(\mathcal A_\theta^{alg}, {}_{i}\mathcal A_\theta^{alg \ast})^{\mathbb Z_4}=HC^0(\mathcal A_\theta^{alg}, {}_{i}\mathcal A_\theta^{alg \ast})^{\mathbb Z_4} \cong \mathbb C^2.$$
Similarly we have $HC^2(\mathcal A_\theta^{alg}, {}_{}\mathcal A_\theta^{alg \ast})^{\mathbb Z_4}\cong  \mathbb C^2$ and $HC^2(\mathcal A_\theta^{alg}, {}_{-1}\mathcal A_\theta^{alg \ast})^{\mathbb Z_4} \cong \mathbb C^3$. 
Hence  for the second cyclic cohomology we finally conclude that :

\begin{center}
$HC^2(\mathcal A_\theta^{alg} \rtimes \mathbb Z_4) \cong HC^2(\mathcal A_\theta^{alg}, {}_{}\mathcal A_\theta^{alg \ast})^{\mathbb Z_4}\displaystyle \oplus HC^2(\mathcal A_\theta^{alg}, {}_{i}\mathcal A_\theta^{alg \ast})^{\mathbb Z_4}\displaystyle \oplus HC^2(\mathcal A_\theta^{alg}, {}_{-i}\mathcal A_\theta^{alg \ast})^{\mathbb Z_4}\displaystyle \oplus HC^2(\mathcal A_\theta^{alg}, {}_{-1}\mathcal A_\theta^{alg \ast})^{\mathbb Z_4}\cong \mathbb C^{9}$.
\end{center}
The dimension of $HC^0(\mathcal A_\theta^{alg} \rtimes \mathbb Z_4)$ and  $HC^1(\mathcal A_\theta^{alg} \rtimes \mathbb Z_4)$ can be easily computed.
\end{proof}

\begin{lemma} \label{thm:cyc1236}
$$HC^0(\mathcal A_\theta^{alg} \rtimes \mathbb Z_6) \cong \mathbb C^{9}; \; HC^1(\mathcal A_\theta^{alg} \rtimes \mathbb Z_6) \cong 0;\; HC^2(\mathcal A_\theta^{alg} \rtimes \mathbb Z_6) \cong \mathbb C^{10}.$$
\end{lemma}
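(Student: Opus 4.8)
The plan is to repeat, for the $\mathbb{Z}_6$ orbifold, the argument that established Lemmas \ref{thm:cyc1233} and \ref{thm:cyc1234}, now feeding in the Hochschild data assembled in Theorems \ref{thm:hoch06}, \ref{thm:hoch16} and \ref{thm:hoch26}. First I would apply the paracyclic decomposition to split $HC^\bullet(\mathcal A_\theta^{alg} \rtimes \mathbb Z_6)$ into the six $g$-twisted summands indexed by $g \in \{1,\omega,\omega^2,-1,-\omega,-\omega^2\}$, exactly as the Hochschild groups were split. The single structural input I need is that $H^1(\mathcal A_\theta^{alg},{}_{g}\mathcal A_\theta^{alg \ast})^{\mathbb Z_6} = 0$ in every sector, which is the content of Theorem \ref{thm:hoch16}.

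In each sector I would run the Connes $B,S,I$ long exact sequence used in the $\mathbb{Z}_3$ and $\mathbb{Z}_4$ cases. Since $HC^{-1}=0$ and $H^1=0$, the segment $0 = HC^{-1} \xrightarrow{S} HC^1 \xrightarrow{I} H^1 = 0$ forces $HC^1(\mathcal A_\theta^{alg},{}_{g}\mathcal A_\theta^{alg \ast})^{\mathbb Z_6} = 0$ for every $g$; summing over the six sectors gives $HC^1(\mathcal A_\theta^{alg} \rtimes \mathbb Z_6) \cong 0$. With both $H^1=0$ and $HC^1=0$, the remaining stretch $0 \to HC^0 \xrightarrow{S} HC^2 \xrightarrow{I} H^2 \to 0$ is short exact in each sector, so $HC^2 \cong HC^0 \oplus H^2$ sector by sector, where as usual $HC^0 = H^0$ in degree zero.

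Next I would tabulate the contributions. The sector-by-sector dimensions of $H^0$ recorded in the proof of Theorem \ref{thm:hoch06} are $1,2,2,2,1,1$ for $g = 1,\omega,\omega^2,-1,-\omega,-\omega^2$ respectively, so $HC^0(\mathcal A_\theta^{alg} \rtimes \mathbb Z_6) \cong \mathbb{C}^9$. For the $H^2$ contributions, Theorem \ref{thm:hoch26} together with the vanishing statements preceding it show $H^2(\mathcal A_\theta^{alg},{}_{g}\mathcal A_\theta^{alg \ast})^{\mathbb Z_6} = 0$ for every twisted $g$, while it equals $\mathbb{C}$ only in the untwisted sector $g=1$ (the class generated by $U_2 U_1$). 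Adding the $HC^0$ total of $9$ to this single $H^2$ contribution of $1$ yields $HC^2(\mathcal A_\theta^{alg} \rtimes \mathbb Z_6) \cong \mathbb{C}^{10}$, completing the three assertions.

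There is no genuine obstacle here: once the Hochschild groups are in hand the computation is pure bookkeeping. The only points demanding care are the correct pairing of the six elements of $\mathbb{Z}_6$ with their twisted sectors and the confirmation that, apart from the untwisted diagonal class, no second Hochschild cohomology class survives the $\mathbb{Z}_6$ averaging; both are already settled in the cited Hochschild theorems, so the present lemma reduces to assembling those facts through the exact sequence.
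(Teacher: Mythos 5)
Your proposal is correct and follows essentially the same route as the paper: the paracyclic decomposition into the six twisted sectors, the Connes $B,S,I$ exact sequence in each sector using $H^1=HC^1=0$ to get $HC^2\cong HC^0\oplus H^2$, and the bookkeeping $9+1=10$ with the only $H^2$ contribution coming from the untwisted class generated by $U_2U_1$. The sector dimensions you quote ($1,2,2,2,1,1$ for $H^0$ and the vanishing of all twisted $H^2$ invariants) match the paper's Theorems on the zeroth and second Hochschild cohomology, so nothing further is needed.
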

\begin{proof} 
Again we consider the $B,S,I$ long exact sequence for Hochschild and cyclic cohomology.
\begin{center}
$\cdots \rightarrow H^1(\mathcal A_\theta^{alg},{}_{-\omega}\mathcal A_\theta^{alg \ast})^{\mathbb Z_6} \xrightarrow{B} HC^0(\mathcal A_\theta^{alg},{}_{-\omega}\mathcal A_\theta^{alg \ast})^{\mathbb Z_6} \xrightarrow{S} HC^2(\mathcal A_\theta^{alg},{}_{-\omega}\mathcal A_\theta^{alg \ast})^{\mathbb Z_6} \xrightarrow{I} H^2(\mathcal A_\theta^{alg}, {}_{-\omega}\mathcal A_\theta^{alg \ast})^{\mathbb Z_6} \xrightarrow{B}  HC^1(\mathcal A_\theta^{alg},{}_{-\omega}\mathcal A_\theta^{alg \ast})^{\mathbb Z_6} \xrightarrow{S} \cdots$
\end{center}
In the above sequence $H^\bullet(\mathcal A_\theta^{alg}, {}_{-\omega}\mathcal A_\theta^{alg \ast})^{\mathbb Z_6} = 0=HC^\bullet(\mathcal A_\theta^{alg}, {}_{-\omega}\mathcal A_\theta^{alg \ast})^{\mathbb Z_6}$ for   $\bullet = 1$, hence $$HC^2(\mathcal A_\theta^{alg}, {}_{-\omega}\mathcal A_\theta^{alg \ast})^{\mathbb Z_6}\cong HC^0(\mathcal A_\theta^{alg}, {}_{-\omega}\mathcal A_\theta^{alg \ast})^{\mathbb Z_6} \displaystyle \oplus H^2(\mathcal A_\theta^{alg}, {}_{-\omega}\mathcal A_\theta^{alg \ast})^{\mathbb Z_6}=HC^0(\mathcal A_\theta^{alg}, {}_{-\omega}\mathcal A_\theta^{alg \ast})^{\mathbb Z_6} \cong \mathbb C.$$
We also have $HC^2(\mathcal A_\theta^{alg}, {}_{}\mathcal A_\theta^{alg \ast})^{\mathbb Z_6}\cong  \mathbb C^2$, $HC^2(\mathcal A_\theta^{alg}, {}_{-1}\mathcal A_\theta^{alg \ast})^{\mathbb Z_6} \cong \mathbb C^2$ and $HC^2(\mathcal A_\theta^{alg}, {}_{\omega}\mathcal A_\theta^{alg \ast})^{\mathbb Z_6} \cong \mathbb C^2$. 
Hence we finally conclude that :

\begin{center}
$HC^2(\mathcal A_\theta^{alg} \rtimes \mathbb Z_6) \cong HC^2(\mathcal A_\theta^{alg}, {}_{}\mathcal A_\theta^{alg \ast})^{\mathbb Z_6}\displaystyle \oplus HC^2(\mathcal A_\theta^{alg}, {}_{-\omega}\mathcal A_\theta^{alg \ast})^{\mathbb Z_6}\displaystyle \oplus HC^2(\mathcal A_\theta^{alg}, {}_{-\omega^2}\mathcal A_\theta^{alg \ast})^{\mathbb Z_6}\displaystyle \oplus HC^2(\mathcal A_\theta^{alg}, {}_{\omega^2}\mathcal A_\theta^{alg \ast})^{\mathbb Z_6}\displaystyle \oplus HC^2(\mathcal A_\theta^{alg}, {}_{\omega}\mathcal A_\theta^{alg \ast})^{\mathbb Z_6}\displaystyle \oplus HC^2(\mathcal A_\theta^{alg}, {}_{-1}\mathcal A_\theta^{alg \ast})^{\mathbb Z_6}\cong \mathbb C^{10}$.
\end{center}
The dimension of $HC^0(\mathcal A_\theta^{alg} \rtimes \mathbb Z_4)$ and  $HC^1(\mathcal A_\theta^{alg} \rtimes \mathbb Z_4)$ can be easily computed.
\end{proof}

\begin{proof}[Proof of Theorem \ref{thm:cyclic}]
From the modified Connes complex we have $H^\bullet(\mathcal A_\theta^{alg} \rtimes \G, (\mathcal A_\theta^{alg} \rtimes \G)^\ast) = 0$ for $\G = \mathbb Z_3, \mathbb Z_4$ and $\mathbb Z_6$ and $\bullet \geq 3$. We also have the isomorphism $HC^\bullet(\mathcal A_\theta^{alg} \rtimes \G, (\mathcal A_\theta^{alg} \rtimes \G)^\ast) \cong HC^{\bullet+2}(\mathcal A_\theta^{alg} \rtimes \mathbb Z_2, (\mathcal A_\theta^{alg} \rtimes \mathbb Z_2)^\ast)$ for $\bullet > 1$. Now, using the results of Lemma \ref{thm:cyc1233} we arrive at the desired results:
$$HP^{even}(\mathcal A_\theta^{alg} \rtimes \mathbb Z_3) \cong \mathbb C^8 \text{ and }HP^{odd}(\mathcal A_\theta^{alg} \rtimes \mathbb Z_3) = 0.$$
Similarly the Lemmas \ref{thm:cyc1234} and \ref{thm:cyc1236} yields the following:
$$HP^{even}(\mathcal A_\theta^{alg} \rtimes \mathbb Z_4) \cong \mathbb C^9 \text{ and }HP^{odd}(\mathcal A_\theta^{alg} \rtimes \mathbb Z_4) = 0.$$
 and 
$$HP^{even}(\mathcal A_\theta^{alg} \rtimes \mathbb Z_6) \cong \mathbb C^{10} \text{ and }HP^{odd}(\mathcal A_\theta^{alg} \rtimes \mathbb Z_6) = 0.$$
This completes the proof.
\end{proof}

\section{Chern-Connes Index}
The projections of the noncommutative smooth torus orbifolds, $\mathcal A_\theta \rtimes \mathbb Z_3$, $\mathcal A_\theta \rtimes \mathbb Z_4$ and $\mathcal A_\theta \rtimes \mathbb Z_6$ were calculated in \cite{ELPH}. In each of these cases all except one projection is algebraic and hence an element of the respective $K_0$ group. In this section we shall pair these projections with the above calculated periodic even cohomology cocycles and hence obtain a table with the Chern-Connes index for the noncommutative algebraic orbifolds.
\centerline{\uline{The case $\Gamma = \mathbb Z_3$.}}
Let $\zeta = e^{\frac{ \pi i }{3}}$ and $t$ satisfy the relations $t^3 =1$, $ t U_1 t^{-1} =  \frac{U_1^{-1} U_2}{\sqrt \lambda}$ and $t U_2 t^{-1} = U_1^{-1}$. The known projections of $\mathcal A_\theta^{alg} \rtimes \mathbb Z_3$ are as follows:
\begin{enumerate}
\item[(i)]$[1]$
\item[(ii)]$[p^{\theta}_0]$, where $p^{\theta}_0= \displaystyle \frac{1}{3}(1+t+t^2)$.
\item[(ii)]$[p^{\theta}_1]$, where $p^{\theta}_1= \displaystyle \frac{1}{3}(1+\zeta^2 t+(\omega t)^2)$.
\item[(iii)]$[q_0^{\theta}]$, where $q_0^{\theta}= \displaystyle \frac{1}{3}(1+e^{(\frac{2\pi i(2+\theta)}{6})}( U_1 t)+(e^{(2 \pi i\frac{2+\theta}{3})}( U_1 t)^2))$.
\item[(iv)]$[q_1^{\theta}]$, where $q_1^{\theta}=   \displaystyle \frac{1}{3}(1+e^{(\frac{2\pi i(2+\theta)}{6})}( \zeta^2 U_1 t)+(e^{(2 \pi i\frac{2+\theta}{3})}(\zeta^2 U_1 t)^2))$.
\item[(v)]$[r_0^{\theta}]$, where $r_0^{\theta}= \displaystyle \frac{1}{3}(1+U_1^2 t + (U_1^2t)^2)$.
\item[(vi)]$[r_1^{\theta}]$, where $r_1^{\theta}= \displaystyle \frac{1}{3}(1+\zeta^2 U_1^2 t + (\zeta^2 U_1^2t)^2)$.
\end{enumerate}

\begin{proof}[Proof of Theorem \ref{thm:table}](a)
The following are the Chern-Connes indices for $\mathcal A_\theta^{alg} \rtimes \mathbb Z_3$.\\
\underline{Pairing of $[S\tau]$} \newline
1. $\langle [1],[S\tau] \rangle = 1$ \\
2. $\langle [p_0^\theta],[S\tau] \rangle = \displaystyle \frac{1}{3}$ \\
3. $\langle [p_1^\theta],[S\tau] \rangle = \displaystyle \frac{1}{3}$ \\
4. $\langle [q_0^\theta],[S\tau] \rangle = \displaystyle \frac{1}{3}$ \\
5. $\langle [q_1^\theta],[S\tau] \rangle = \displaystyle \frac{1}{3}$. \\
6. $\langle [r_0^\theta],[S\tau] \rangle = \displaystyle \frac{1}{3}$. \\
7. $\langle [r_1^\theta],[S\tau] \rangle = \displaystyle \frac{1}{3}$. \\

\underline{Pairing of $[S\mathcal E_{0,0}^\omega]$} \newline

1. $\langle  [1],[S\mathcal E_{0,0}^\omega] \rangle = 0$ \\
2. $\langle  [p_0^\theta],[S\mathcal E_{0,0}^\omega] \rangle = \displaystyle \frac{1}{3}$ \\
3. $\langle  [p_1^\theta],[S\mathcal E_{0,0}^\omega] \rangle = \displaystyle \frac{\zeta^2}{3}$ \\
4. $\langle  [q_0^\theta],[S\mathcal E_{0,0}^\omega] \rangle = 0$ \\
5. $\langle  [q_1^\theta],[S\mathcal E_{0,0}^\omega] \rangle = 0$ \\
6. $\langle  [r_0^\theta],[S\mathcal E_{0,0}^\omega] \rangle = 0$ \\
7. $\langle  [r_1^\theta],[S\mathcal E_{0,0}^\omega] \rangle = 0$. \\

\underline{Pairing of $[S\mathcal E_{0,0}^{\omega^2}]$} \newline
1. $\langle  [1],[S\mathcal E_{0,0}^{\omega^2}] \rangle = 0$ \\
2. $\langle  [p_0^\theta],[S\mathcal E_{0,0}^{\omega^2}] \rangle = \displaystyle \frac{1}{3}$ \\
3. $\langle  [p_1^\theta],[S\mathcal E_{0,0}^{\omega^2}] \rangle = \displaystyle \frac{\zeta^4}{3}$ \\
4. $\langle  [q_0^\theta],[S\mathcal E_{0,0}^{\omega^2}] \rangle = 0$ \\
5. $\langle  [q_1^\theta],[S\mathcal E_{0,0}^{\omega^2}] \rangle = 0$. \\
6. $\langle  [r_0^\theta],[S\mathcal E_{0,0}^{\omega^2}] \rangle = 0$ \\
7. $\langle  [r_1^\theta],[S\mathcal E_{0,0}^{\omega^2}] \rangle = 0$. \\

\underline{Pairing of $[S\mathcal E_{0,1}^{\omega}]$} \newline
1. $\langle  [1],[S\mathcal E_{0,1}^{\omega}] \rangle = 0$ \\
2. $\langle  [p_0^\theta],[S\mathcal E_{0,1}^{\omega}] \rangle = 0$ \\
3. $\langle  [p_1^\theta],[S\mathcal E_{0,1}^{\omega}] \rangle = 0$ \\
4. $\langle  [q_0^\theta],[S\mathcal E_{0,1}^{\omega}] \rangle =0$ \\
5. $\langle  [q_1^\theta],[S\mathcal E_{0,1}^{\omega}] \rangle = 0$. \\
6. $\langle  [r_0^\theta],[S\mathcal E_{0,1}^{\omega}] \rangle = 0$ \\
7. $\langle  [r_1^\theta],[S\mathcal E_{0,1}^{\omega}] \rangle = 0$. \\

\underline{Pairing of $[S\mathcal E_{0,1}^{\omega^2}]$} \newline
1. $\langle  [1],[S\mathcal E_{0,1}^{\omega^2}] \rangle = 0$ \\
2. $\langle  [p_0^\theta],[S\mathcal E_{0,1}^{\omega^2}] \rangle = 0$ \\
3. $\langle  [p_1^\theta],[S\mathcal E_{0,1}^{\omega^2}] \rangle = 0$ \\
4. $\langle  [q_0^\theta],[S\mathcal E_{0,1}^{\omega^2}] \rangle = \displaystyle \frac{\zeta^2}{3 \sqrt[3]{\lambda^2}}$ \\
5. $\langle  [q_1^\theta],[S\mathcal E_{0,1}^{\omega^2}] \rangle = \displaystyle \frac{1}{3 \sqrt[3]{\lambda^2}}$. \\
6. $\langle  [r_0^\theta],[S\mathcal E_{0,1}^{\omega^2}] \rangle = 0$ \\
7. $\langle  [r_1^\theta],[S\mathcal E_{0,1}^{\omega^2}] \rangle = 0$. \\

\underline{Pairing of $[S\mathcal E_{0,-1}^{\omega}]$} \newline
1. $\langle  [1],[S\mathcal E_{0,-1}^{\omega}] \rangle = 0$ \\
2. $\langle  [p_0^\theta],[S\mathcal E_{0,-1}^{\omega}] \rangle = 0$ \\
3. $\langle  [p_1^\theta],[S\mathcal E_{0,-1}^{\omega}] \rangle = 0$ \\
4. $\langle  [q_0^\theta],[S\mathcal E_{0,-1}^{\omega}] \rangle = \displaystyle \frac{\zeta \sqrt[6]{\lambda}}{3}$ \\
5. $\langle  [q_1^\theta],[S\mathcal E_{0,-1}^{\omega}] \rangle = \displaystyle \frac{- \sqrt[6]{\lambda}}{3}$. \\
6. $\langle  [r_0^\theta],[S\mathcal E_{0,-1}^{\omega}] \rangle = 0$ \\
7. $\langle  [r_1^\theta],[S\mathcal E_{0,-1}^{\omega}] \rangle = 0$. \\

\underline{Pairing of $[S\mathcal E_{0,-1}^{\omega^2}]$} \newline
1. $\langle  [1],[S\mathcal E_{0,-1}^{\omega^2}] \rangle = 0$ \\
2. $\langle  [p_0^\theta],[S\mathcal E_{0,-1}^{\omega^2}] \rangle = 0$ \\
3. $\langle  [p_1^\theta],[S\mathcal E_{0,-1}^{\omega^2}] \rangle = 0$ \\
4. $\langle  [q_0^\theta],[S\mathcal E_{0,-1}^{\omega^2}] \rangle =0$ \\
5. $\langle  [q_1^\theta],[S\mathcal E_{0,-1}^{\omega^2}] \rangle = 0$. \\
6. $\langle  [r_0^\theta],[S\mathcal E_{0,-1}^{\omega^2}] \rangle = 0$ \\
7. $\langle  [r_1^\theta],[S\mathcal E_{0,-1}^{\omega^2}] \rangle = 0$. \\

\underline{Pairing of $[S\varphi]$} \newline
1. $\langle [1],[S\varphi] \rangle = 0$ \\
2. $\langle [p_0^\theta],[S\varphi] \rangle = 0$ \\
3. $\langle [p_1^\theta],[S\varphi] \rangle = 0$ \\
4. $\langle [q_0^\theta],[S\varphi] \rangle = 0$ \\
5. $\langle [q_1^\theta],[S\varphi] \rangle = 0$. \\
6. $\langle [r_0^\theta],[S\varphi] \rangle = 0$ \\
7. $\langle [r_1^\theta],[S\varphi] \rangle = 0$ \\
\end{proof}

\centerline{\uline{The case $\Gamma = \mathbb Z_4$.}}
Let $t$ satisfy the relations $t^4 =1$, $ t U_1 t^{-1} =  U_2$ and $t U_2 t^{-1} = U_1^{-1}$. The known projections of $\mathcal A_\theta^{alg} \rtimes \mathbb Z_4$ are as follows:
\begin{enumerate}
\item[(i)]$[1]$
\item[(ii)]$[p^{\theta}_0]$, where $p^{\theta}_0= \displaystyle \frac{1}{4}(1+t+t^2+t
^3)$.
\item[(iii)]$[p^{\theta}_1]$, where $p^{\theta}_1= \displaystyle \frac{1}{4}(1+ i t -t^2-it^3)$.
\item[(iv)]$[p^{\theta}_2]$, where $p^{\theta}_2= \displaystyle \frac{1}{4}(1-t+t^2-t^3)$.
\item[(v)]$[q_0^{\theta}]$, where $q_0^{\theta}= \displaystyle \frac{1}{4}(1+i \sqrt[4]{\lambda}U_1 t - [\sqrt[4]{\lambda} U_1 t]^2 - i[\sqrt[4]{\lambda} U_1 t]^3)$.
\item[(vi)]$[q_1^{\theta}]$, where $q_1^{\theta}= \displaystyle \frac{1}{4}(1- \sqrt[4]{\lambda}U_1 t + [\sqrt[4]{\lambda} U_1 t]^2 - [\sqrt[4]{\lambda} U_1 t]^3)$.
\item[(vii)]$[q_2^{\theta}]$, where $q_2^{\theta}= \displaystyle \frac{1}{4}(1-i \sqrt[4]{\lambda}U_1 t - [\sqrt[4]{\lambda} U_1 t]^2 +i[\sqrt[4]{\lambda} U_1 t]^3)$.
\item[(viii)]$[r_0^{\theta}]$, where $r_0^{\theta}= \displaystyle \frac{1}{2}(1-U_1 t^2)$.
\end{enumerate}

\begin{proof}[Proof of Theorem \ref{thm:table}](b)
The following are the Chern-Connes indices for $\mathcal A_\theta^{alg} \rtimes \mathbb Z_4$.\\
\underline{Pairing of $[S\tau]$} \newline
1. $\langle [1],[S\tau] \rangle = 1$ \\
2. $\langle [p_0^\theta],[S\tau] \rangle = \displaystyle \frac{1}{4}$ \\
3. $\langle [p_1^\theta],[S\tau] \rangle = \displaystyle \frac{1}{4}$ \\
4. $\langle [p_2^\theta],[S\tau] \rangle = \displaystyle \frac{1}{4}$ \\
5. $\langle [q_0^\theta],[S\tau] \rangle = \displaystyle \frac{1}{4}$. \\
6. $\langle [q_1^\theta],[S\tau] \rangle = \displaystyle \frac{1}{4}$. \\
7. $\langle [q_2^\theta],[S\tau] \rangle = \displaystyle \frac{1}{4}$. \\
8. $\langle [r_0^\theta],[S\tau] \rangle = \displaystyle \frac{1}{2}$. \\

\underline{Pairing of $[S\mathcal D_{1,1}]$} \newline

1. $\langle [1],[S\mathcal D_{1,1}] \rangle = 0$ \\
2. $\langle [p_0^\theta],[S\mathcal D_{1,1}] \rangle = 0$ \\
3. $\langle [p_1^\theta],[S\mathcal D_{1,1}] \rangle = 0$ \\
4. $\langle [p_2^\theta],[S\mathcal D_{1,1}] \rangle = 0$ \\
5. $\langle [q_0^\theta],[S\mathcal D_{1,1}] \rangle = \displaystyle \frac{-1}{4\sqrt\lambda}$. \\
6. $\langle [q_1^\theta],[S\mathcal D_{1,1}] \rangle = \displaystyle \frac{1}{4\sqrt \lambda}$. \\
7. $\langle [q_2^\theta],[S\mathcal D_{1,1}] \rangle = \displaystyle \frac{-1}{4 \sqrt \lambda}$. \\
8. $\langle [r_0^\theta],[S\mathcal D_{1,1}] \rangle = 0$. \\

\underline{Pairing of $[S\mathcal D_{0,0}]$} \newline
1. $\langle [1],[S\mathcal D_{0,0}] \rangle = 0$ \\
2. $\langle [p_0^\theta],[S\mathcal D_{0,0}] \rangle = \displaystyle \frac{1}{4}$ \\
3. $\langle [p_1^\theta],[S\mathcal D_{0,0}] \rangle = \displaystyle \frac{-1}{4}$ \\
4. $\langle [p_2^\theta],[S\mathcal D_{0,0}] \rangle = \displaystyle \frac{1}{4}$ \\
5. $\langle [q_0^\theta],[S\mathcal D_{0,0}] \rangle = 0$. \\
6. $\langle [q_1^\theta],[S\mathcal D_{0,0}] \rangle = 0$. \\
7. $\langle [q_2^\theta],[S\mathcal D_{0,0}] \rangle = 0$. \\
8. $\langle [r_0^\theta],[S\mathcal D_{0,0}] \rangle = 0$. \\

\underline{Pairing of $[S(\mathcal D_{0,1}+\mathcal D_{1,0})]$} \newline
1. $\langle [1],[S(\mathcal D_{0,1}+\mathcal D_{1,0})] \rangle = 0$ \\
2. $\langle [p_0^\theta],[S(\mathcal D_{0,1}+\mathcal D_{1,0})] \rangle = 0$ \\
3. $\langle [p_1^\theta],[S(\mathcal D_{0,1}+\mathcal D_{1,0})] \rangle = 0$ \\
4. $\langle [p_2^\theta],[S(\mathcal D_{0,1}+\mathcal D_{1,0})] \rangle = 0$ \\
5. $\langle [q_0^\theta],[S(\mathcal D_{0,1}+\mathcal D_{1,0})] \rangle = 0$. \\
6. $\langle [q_1^\theta],[S(\mathcal D_{0,1}+\mathcal D_{1,0})] \rangle = 0$. \\
7. $\langle [q_2^\theta],[S(\mathcal D_{0,1}+\mathcal D_{1,0})] \rangle = 0$. \\
8. $\langle [r_0^\theta],[S(\mathcal D_{0,1}+\mathcal D_{1,0})] \rangle = \displaystyle \frac{-1}{2}$. \\

\underline{Pairing of $[S\mathcal F^i_{0,0}]$} \newline
1. $\langle [1],[S\mathcal F^i_{0,0}] \rangle = 0$ \\
2. $\langle [p_0^\theta],[S\mathcal F^i_{0,0}] \rangle = \displaystyle \frac{1}{4}$ \\
3. $\langle [p_1^\theta],[S\mathcal F^i_{0,0}] \rangle = \displaystyle \frac{i}{4}$ \\
4. $\langle [p_2^\theta],[S\mathcal F^i_{0,0}] \rangle = \displaystyle \frac{-1}{4}$ \\
5. $\langle [q_0^\theta],[S\mathcal F^i_{0,0}] \rangle = 0$. \\
6. $\langle [q_1^\theta],[S\mathcal F^i_{0,0}] \rangle = 0$. \\
7. $\langle [q_2^\theta],[S\mathcal F^i_{0,0}] \rangle = 0$. \\
8. $\langle [r_0^\theta],[S\mathcal F^i_{0,0}] \rangle = 0$. \\

\underline{Pairing of $[S\mathcal F^i_{0,1}]$} \newline
1. $\langle [1],[S\mathcal F^i_{0,1}] \rangle = 0$ \\
2. $\langle [p_0^\theta],[S\mathcal F^i_{0,1}] \rangle = 0$ \\
3. $\langle [p_1^\theta],[S\mathcal F^i_{0,1}] \rangle = 0$ \\
4. $\langle [p_2^\theta],[S\mathcal F^i_{0,1}] \rangle = 0$ \\
5. $\langle [q_0^\theta],[S\mathcal F^i_{0,1}] \rangle = \displaystyle \frac{i \sqrt[4]{\lambda}}{4}$. \\
6. $\langle [q_1^\theta],[S\mathcal F^i_{0,1}] \rangle = \displaystyle \frac{- \sqrt[4]{\lambda}}{4}$. \\
7. $\langle [q_2^\theta],[S\mathcal F^i_{0,1}] \rangle = \displaystyle \frac{-i \sqrt[4]{\lambda}}{4}$. \\
8. $\langle [r_0^\theta],[S\mathcal F^i_{0,1}] \rangle = 0$. \\

\underline{Pairing of $[S\mathcal F^{-i}_{0,0}]$} \newline
1. $\langle [1],[S\mathcal F^{-i}_{0,0}] \rangle = 0$ \\
2. $\langle [p_0^\theta],[S\mathcal F^{-i}_{0,0}] \rangle = \displaystyle \frac{1}{4}$ \\
3. $\langle [p_1^\theta],[S\mathcal F^{-i}_{0,0}] \rangle = \displaystyle \frac{-i}{4}$ \\
4. $\langle [p_2^\theta],[S\mathcal F^{-i}_{0,0}] \rangle = \displaystyle \frac{-1}{4}$ \\
5. $\langle [q_0^\theta],[S\mathcal F^{-i}_{0,0}] \rangle = 0$. \\
6. $\langle [q_1^\theta],[S\mathcal F^{-i}_{0,0}] \rangle = 0$. \\
7. $\langle [q_2^\theta],[S\mathcal F^{-i}_{0,0}] \rangle = 0$. \\
8. $\langle [r_0^\theta],[S\mathcal F^{-i}_{0,0}] \rangle = 0$. \\

\underline{Pairing of $[S\mathcal F^{-i}_{0,1}]$} \newline
1. $\langle [1],[S\mathcal F^{-i}_{0,1}] \rangle = 0$ \\
2. $\langle [p_0^\theta],[S\mathcal F^{-i}_{0,1}] \rangle = 0$ \\
3. $\langle [p_1^\theta],[S\mathcal F^{-i}_{0,1}] \rangle = 0$ \\
4. $\langle [p_2^\theta],[S\mathcal F^{-i}_{0,1}] \rangle = 0$ \\
5. $\langle [q_0^\theta],[S\mathcal F^{-i}_{0,1}] \rangle = \displaystyle \frac{-i}{4\sqrt[4]{\lambda}}$. \\
6. $\langle [q_1^\theta],[S\mathcal F^{-i}_{0,1}] \rangle = \displaystyle \frac{-1}{4\sqrt[4]{\lambda}}$. \\
7. $\langle [q_2^\theta],[S\mathcal F^{-i}_{0,1}] \rangle = \displaystyle \frac{i}{4\sqrt[4]{\lambda}}$. \\
8. $\langle [r_0^\theta],[S\mathcal F^{-i}_{0,1}] \rangle = 0$. \\

\underline{Pairing of $[S\varphi]$} \newline
1. $\langle [1],[S\varphi] \rangle = 0$ \\
2. $\langle [p_0^\theta],[S\varphi] \rangle = 0$ \\
3. $\langle [p_1^\theta],[S\varphi] \rangle = 0$ \\
4. $\langle [p_2^\theta],[S\varphi] \rangle = 0$ \\
5. $\langle [q_0^\theta],[S\varphi] \rangle = 0$. \\
6. $\langle [q_1^\theta],[S\varphi] \rangle = 0$. \\
7. $\langle [q_2^\theta],[S\varphi] \rangle = 0$. \\
8. $\langle [r_0^\theta],[S\varphi] \rangle = 0$. \\
\end{proof}

\centerline{\uline{The case $\Gamma = \mathbb Z_6$.}}
Let $t$ satisfy the relations $t^6 =1$, $ t U_1 t^{-1} =  U_2$ and $t U_2 t^{-1} = \frac{U_1^{-1}U_2}{\sqrt\lambda}$. The known projections of $\mathcal A_\theta^{alg} \rtimes \mathbb Z_6$ are as follows:
\begin{enumerate}
\item[(i)]$[1]$
\item[(ii)]$[p^{\theta}_0]$, where $p^{\theta}_0= \displaystyle \frac{1}{6}(1+t+t^2+t
^3+t^4+t^5)$.
\item[(iii)]$[p^{\theta}_1]$, where $p^{\theta}_1= \displaystyle \frac{1}{6}(1+\zeta t+\zeta^2 t^2- t
^3+\zeta^4 t^4+ \zeta^5 t^5)$.
\item[(iv)]$[p^{\theta}_2]$, where $p^{\theta}_2= \displaystyle \frac{1}{6}(1+\zeta^2 t+\zeta^4 t^2+ t
^3- t^4+ \zeta^4 t^5)$.
\item[(v)]$[p^{\theta}_3]$, where $p^{\theta}_3= \displaystyle \frac{1}{6}(1- t+ t^2- t
^3+ t^4- t^5)$.
\item[(vi)]$[p^{\theta}_4]$, where $p^{\theta}_4= \displaystyle \frac{1}{6}(1+\zeta^4 t+\zeta^2 t^2+ t
^3+\zeta^4 t^4+ \zeta^2 t^5)$.
\item[(vii)]$[q_0^{\theta}]$, where $q_0^{\theta}= \displaystyle \frac{1}{3}(1+e^{(\frac{2\pi i(2+\theta)}{6})}( U_1 t^2)+[e^{(2 \pi i\frac{2+\theta}{6})}( U_1 t^2)]^2)$.
\item[(viii)]$[q_1^{\theta}]$, where $q_1^{\theta}=\displaystyle \frac{1}{3}(1+\zeta^2e^{(\frac{2\pi i(2+\theta)}{6})}( U_1 t^2)+\zeta^4[e^{(2 \pi i\frac{2+\theta}{6})}( U_1 t^2)]^2)$.
\item[(ix)]$[r^{\theta}]$, where $r^{\theta}= \displaystyle \frac{1}{2}(1-U_1 t^3)$.
\end{enumerate}

\begin{proof}[Proof of Theorem \ref{thm:table}](c)
The following are the Chern-Connes indices for $\mathcal A_\theta^{alg} \rtimes \mathbb Z_6$.\\
\underline{Pairing of $[S\tau]$} \newline
1. $\langle [1],[S\tau] \rangle = 1$ \\
2. $\langle [p_0^\theta],[S\tau] \rangle = \displaystyle \frac{1}{6}$ \\
3. $\langle [p_1^\theta],[S\tau] \rangle = \displaystyle \frac{1}{6}$ \\
4. $\langle [p_2^\theta],[S\tau] \rangle = \displaystyle \frac{1}{6}$ \\
5. $\langle [q_0^\theta],[S\tau] \rangle = \displaystyle \frac{1}{6}$. \\
6. $\langle [q_1^\theta],[S\tau] \rangle = \displaystyle \frac{1}{3}$. \\
7. $\langle [q_2^\theta],[S\tau] \rangle = \displaystyle \frac{1}{3}$. \\
8. $\langle [r_0^\theta],[S\tau] \rangle = \displaystyle \frac{1}{2}$. \\

\underline{Pairing of $[S\mathcal D_{0,0}]$} \newline

1. $\langle [1],[S\mathcal D_{0,0}] \rangle = 0$ \\
2. $\langle [p_0^\theta],[S\mathcal D_{0,0}] \rangle = \displaystyle \frac{1}{6}$ \\
3. $\langle [p_1^\theta],[S\mathcal D_{0,0}] \rangle = \displaystyle \frac{-1}{6}$ \\
4. $\langle [p_2^\theta],[S\mathcal D_{0,0}] \rangle = \displaystyle \frac{1}{6}$ \\
5. $\langle [p_3^\theta],[S\mathcal D_{0,0}] \rangle = \displaystyle \frac{-1}{6}$. \\
6. $\langle [p_4^\theta],[S\mathcal D_{0,0}] \rangle = \displaystyle \frac{1}{6}$. \\
7. $\langle [q_0^\theta],[S\mathcal D_{0,0}] \rangle = 0$. \\
7. $\langle [q_1^\theta],[S\mathcal D_{0,0}] \rangle = 0$. \\
8. $\langle [r^\theta],[S\mathcal D_{0,0}] \rangle = 0$. \\

\underline{Pairing of $[S(\mathcal D_{1,0}+\lambda \sqrt{\lambda}\mathcal D_{1,0}+\sqrt\lambda \mathcal D_{1,1})]$} \newline
1. $\langle [1],[S(\mathcal D_{1,0}+\lambda \sqrt{\lambda}\mathcal D_{1,0}+\sqrt\lambda \mathcal D_{1,1})] \rangle = 0$ \\
2. $\langle [p_0^\theta],[S(\mathcal D_{1,0}+\lambda \sqrt{\lambda}\mathcal D_{1,0}+\sqrt\lambda \mathcal D_{1,1})] \rangle = 0$ \\
3. $\langle [p_1^\theta],[S(\mathcal D_{1,0}+\lambda \sqrt{\lambda}\mathcal D_{1,0}+\sqrt\lambda \mathcal D_{1,1})] \rangle = 0$ \\
4. $\langle [p_2^\theta],[S(\mathcal D_{1,0}+\lambda \sqrt{\lambda}\mathcal D_{1,0}+\sqrt\lambda \mathcal D_{1,1})] \rangle = 0$ \\
5. $\langle [p_3^\theta],[S(\mathcal D_{1,0}+\lambda \sqrt{\lambda}\mathcal D_{1,0}+\sqrt\lambda \mathcal D_{1,1})] \rangle = 0$. \\
6. $\langle [p_4^\theta],[S(\mathcal D_{1,0}+\lambda \sqrt{\lambda}\mathcal D_{1,0}+\sqrt\lambda \mathcal D_{1,1})] \rangle = 0$. \\
7. $\langle [q_0^\theta],[S(\mathcal D_{1,0}+\lambda \sqrt{\lambda}\mathcal D_{1,0}+\sqrt\lambda \mathcal D_{1,1})] \rangle = 0$. \\
8. $\langle [q_0^\theta],[S(\mathcal D_{1,0}+\lambda \sqrt{\lambda}\mathcal D_{1,0}+\sqrt\lambda \mathcal D_{1,1})] \rangle = 0$. \\
9. $\langle [r_0^\theta],[S(\mathcal D_{1,0}+\lambda \sqrt{\lambda}\mathcal D_{1,0}+\sqrt\lambda \mathcal D_{1,1})] \rangle = \displaystyle \frac{-\lambda \sqrt\lambda}{2}$. \\

\underline{Pairing of $[S(\mathcal E^\omega_{0,1}+\mathcal E^\omega_{0,-1})]$} \newline
1. $\langle [1],[S(\mathcal E^\omega_{0,1}+\mathcal E^\omega_{0,-1})] \rangle = 0$ \\
2. $\langle [p_0^\theta],[S(\mathcal E^\omega_{0,1}+\mathcal E^\omega_{0,-1})] \rangle = 0$ \\
3. $\langle [p_1^\theta],[S(\mathcal E^\omega_{0,1}+\mathcal E^\omega_{0,-1})] \rangle = 0$ \\
4. $\langle [p_2^\theta],[S(\mathcal E^\omega_{0,1}+\mathcal E^\omega_{0,-1})] \rangle = 0$ \\
5. $\langle [p_3^\theta],[S(\mathcal E^\omega_{0,1}+\mathcal E^\omega_{0,-1})] \rangle = 0$. \\
6. $\langle [p_4^\theta],[S(\mathcal E^\omega_{0,1}+\mathcal E^\omega_{0,-1})] \rangle = 0$. \\
7. $\langle [q_0^\theta],[S(\mathcal E^\omega_{0,1}+\mathcal E^\omega_{0,-1})] \rangle = \displaystyle \frac{\zeta}{3}$. \\
8. $\langle [q_1^\theta],[S(\mathcal E^\omega_{0,1}+\mathcal E^\omega_{0,-1})] \rangle = \displaystyle \frac{-1}{3}$. \\
9. $\langle [r_0^\theta],[S(\mathcal E^\omega_{0,1}+\mathcal E^\omega_{0,-1})] \rangle = 0$. \\

\underline{Pairing of $[S\mathcal E^\omega_{0,0}]$} \newline
1. $\langle [1],[S\mathcal E^\omega_{0,0}] \rangle = 0$ \\
2. $\langle [p_0^\theta],[S\mathcal E^\omega_{0,0}] \rangle = \displaystyle \frac{1}{6}$ \\
3. $\langle [p_1^\theta],[S\mathcal E^\omega_{0,0}] \rangle = \displaystyle \frac{\zeta^2}{3}$ \\
4. $\langle [p_2^\theta],[S\mathcal E^\omega_{0,0}] \rangle = \displaystyle \frac{-\zeta}{3}$ \\
5. $\langle [p_3^\theta],[S\mathcal E^\omega_{0,0}] \rangle = \displaystyle \frac{1}{6}$. \\
6. $\langle [p_4^\theta],[S\mathcal E^\omega_{0,0}] \rangle = \displaystyle \frac{\zeta^2}{6}$. \\
7. $\langle [q_0^\theta],[S\mathcal E^\omega_{0,0}] \rangle = 0$. \\
8. $\langle [q_1^\theta],[S\mathcal E^\omega_{0,0}] \rangle = 0$. \\
9. $\langle [r^\theta],[S\mathcal E^\omega_{0,0}] \rangle = 0$. \\

\underline{Pairing of $[S(\mathcal E^{\omega^2}_{0,1}+\mathcal E^{\omega^2}_{0,-1})]$} \newline
1. $\langle [1],[S(\mathcal E^{\omega^2}_{0,1}+\mathcal E^{\omega^2}_{0,-1})] \rangle = 0$ \\
2. $\langle [p_0^\theta],[S(\mathcal E^{\omega^2}_{0,1}+\mathcal E^{\omega^2}_{0,-1})] \rangle = 0$ \\
3. $\langle [p_1^\theta],[S(\mathcal E^{\omega^2}_{0,1}+\mathcal E^{\omega^2}_{0,-1})] \rangle = 0$ \\
4. $\langle [p_2^\theta],[S(\mathcal E^{\omega^2}_{0,1}+\mathcal E^{\omega^2}_{0,-1})] \rangle = 0$ \\
5. $\langle [p_3^\theta],[S(\mathcal E^{\omega^2}_{0,1}+\mathcal E^{\omega^2}_{0,-1})] \rangle = 0$. \\
6. $\langle [p_4^\theta],[S(\mathcal E^{\omega^2}_{0,1}+\mathcal E^{\omega^2}_{0,-1})] \rangle = 0$. \\
7. $\langle [q_0^\theta],[S(\mathcal E^{\omega^2}_{0,1}+\mathcal E^{\omega^2}_{0,-1})] \rangle = \displaystyle \frac{\zeta^2}{3\sqrt[6]{\lambda}}$. \\
8. $\langle [q_1^\theta],[S(\mathcal E^{\omega^2}_{0,1}+\mathcal E^{\omega^2}_{0,-1})] \rangle = \displaystyle  \frac{-\zeta}{3\sqrt[6]{\lambda}}$. \\
9. $\langle [r_0^\theta],[S(\mathcal E^{\omega^2}_{0,1}+\mathcal E^{\omega^2}_{0,-1})] \rangle = 0$. \\

\underline{Pairing of $[S\mathcal E^{\omega^2}_{0,0}]$} \newline
1. $\langle [1],[S\mathcal E^{\omega^2}_{0,0}] \rangle = 0$ \\
2. $\langle [p_0^\theta],[S\mathcal E^{\omega^2}_{0,0}] \rangle = \displaystyle \frac{1}{6}$ \\
3. $\langle [p_1^\theta],[S\mathcal E^{\omega^2}_{0,0}] \rangle = \displaystyle \frac{-\zeta}{6}$ \\
4. $\langle [p_2^\theta],[S\mathcal E^{\omega^2}_{0,0}] \rangle = \displaystyle \frac{-1}{6}$ \\
5. $\langle [p_3^\theta],[S\mathcal E^{\omega^2}_{0,0}] \rangle = \displaystyle \frac{1}{6}$. \\
6. $\langle [p_4^\theta],[S\mathcal E^{\omega^2}_{0,0}] \rangle = \displaystyle \frac{-\zeta}{6}$. \\
7. $\langle [q_0^\theta],[S\mathcal E^{\omega^2}_{0,0}] \rangle = 0$. \\
8. $\langle [q_1^\theta],[S\mathcal E^{\omega^2}_{0,0}] \rangle = 0$. \\
9. $\langle [r^\theta],[S\mathcal E^{\omega^2}_{0,0}] \rangle = 0$. \\

\underline{Pairing of $[S\mathcal G^{-\omega}_{0,0}]$} \newline
1. $\langle [1],[S\mathcal G^{-\omega}_{0,0}] \rangle = 0$ \\
2. $\langle [p_0^\theta],[S\mathcal G^{-\omega}_{0,0}] \rangle = \displaystyle \frac{1}{6}$ \\
3. $\langle [p_1^\theta],[S\mathcal G^{-\omega}_{0,0}] \rangle = \displaystyle \frac{\zeta}{6}$ \\
4. $\langle [p_2^\theta],[S\mathcal G^{-\omega}_{0,0}] \rangle = \displaystyle \frac{\zeta^2}{6}$ \\
5. $\langle [p_3^\theta],[S\mathcal G^{-\omega}_{0,0}] \rangle = \displaystyle \frac{-1}{6}$. \\
6. $\langle [p_4^\theta],[S\mathcal G^{-\omega}_{0,0}] \rangle = \displaystyle \frac{-\zeta}{6}$. \\
7. $\langle [q_0^\theta],[S\mathcal G^{-\omega}_{0,0}] \rangle = 0$. \\
8. $\langle [q_1^\theta],[S\mathcal G^{-\omega}_{0,0}] \rangle = 0$. \\
9. $\langle [r^\theta],[S\mathcal G^{-\omega}_{0,0}] \rangle = 0$. \\

\underline{Pairing of $[S\mathcal G^{-\omega^2}_{0,0}]$} \newline
1. $\langle [1],[S\mathcal G^{-\omega^2}_{0,0}] \rangle = 0$ \\
2. $\langle [p_0^\theta],[S\mathcal G^{-\omega^2}_{0,0}] \rangle = \displaystyle \frac{1}{6}$ \\
3. $\langle [p_1^\theta],[S\mathcal G^{-\omega^2}_{0,0}] \rangle = \displaystyle \frac{-\zeta^2}{6}$ \\
4. $\langle [p_2^\theta],[S\mathcal G^{-\omega^2}_{0,0}] \rangle = \displaystyle \frac{-\zeta}{6}$ \\
5. $\langle [p_3^\theta],[S\mathcal G^{-\omega^2}_{0,0}] \rangle = \displaystyle \frac{-1}{6}$. \\
6. $\langle [p_4^\theta],[S\mathcal G^{-\omega^2}_{0,0}] \rangle = \displaystyle \frac{\zeta^2}{6}$. \\
7. $\langle [q_0^\theta],[S\mathcal G^{-\omega^2}_{0,0}] \rangle = 0$. \\
8. $\langle [q_1^\theta],[S\mathcal G^{-\omega^2}_{0,0}] \rangle = 0$. \\
9. $\langle [r^\theta],[S\mathcal G^{-\omega^2}_{0,0}] \rangle = 0$. \\

\underline{Pairing of $[S\varphi]$} \newline
1. $\langle [1],[S\varphi] \rangle = 0$ \\
2. $\langle [p_0^\theta],[S\varphi] \rangle = 0$ \\
3. $\langle [p_1^\theta],[S\varphi] \rangle = 0$ \\
4. $\langle [p_2^\theta],[S\varphi] \rangle = 0$ \\
5. $\langle [p_3^\theta],[S\varphi] \rangle = 0$. \\
6. $\langle [p_4^\theta],[S\varphi] \rangle = 0$. \\
7. $\langle [q_0^\theta],[S\varphi] \rangle = 0$. \\
8. $\langle [q_1^\theta],[S\varphi] \rangle = 0$. \\
9. $\langle [r^\theta],[S\varphi] \rangle = 0$. \\
\end{proof}
\section{Conclusion and Conjectures}
The homology and cohomology groups \cite{Q1}, \cite{Q2} and computed in this article gives us a complete understanding of the noncommutative algebraic noncommutative torus orbifold. Through our computations, we see the following dualities:
$$H_\bullet(\mathcal A_\theta^{alg} \rtimes \G,\mathcal A_\theta^{alg} \rtimes \G) \cong H^\bullet(\mathcal A_\theta^{alg} \rtimes \G, (\mathcal A_\theta^{alg} \rtimes \G)^\ast).$$
$$HP_\bullet(\mathcal A_\theta^{alg} \rtimes \G) \cong HP^\bullet(\mathcal A_\theta^{alg} \rtimes \G).$$
We conjecture that this duality will hold for the noncommutative smooth orbifold $\mathcal A_\theta \rtimes \G$ under similar restriction on $\theta$ as in \cite{C}. Further we conjecture the following:
\begin{conj}
$K_0(\mathcal A_{\theta}^{alg} \rtimes \G) \cong\begin{cases}
\mathbb Z^7 & \text{ for } \G = \mathbb Z_3\\
\mathbb Z^8  & \text{ for } \G = \mathbb Z_4 \\
\mathbb Z^9 & \text{ for } \G = \mathbb Z_6. \end{cases}$\\
\end{conj}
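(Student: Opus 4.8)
The plan is to bracket $K_0(\mathcal A_\theta^{alg}\rtimes\G)$ between two ranks and to prove freeness, writing $r=7,8,9$ for $\G=\mathbb Z_3,\mathbb Z_4,\mathbb Z_6$. For the lower bound I would use the Chern--Connes index tables of Theorem~\ref{thm:table}. The pairing $\langle\,\cdot\,,\,\cdot\,\rangle\colon K_0\times HP^{even}\to\mathbb C$ is additive in the $K$-theory slot, so any integral relation $\sum_i n_i[P_i]=0$ among the listed algebraic projections $P_i$ forces $\sum_i n_i v_i=0$ in $\mathbb C^{r+1}$, where $v_i$ is the $i$-th row of the table. The subtle point, and the one I would emphasise, is that these rows need \emph{not} be $\mathbb C$-linearly independent: for $\G=\mathbb Z_3$, for instance, the rows of $q_0^\theta,q_1^\theta$ (and likewise of $r_0^\theta,r_1^\theta$) become proportional after subtracting a multiple of $v_{[1]}$, so their $\mathbb C$-span has dimension strictly below $r$. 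However, every such linear dependence visibly involves transcendental quantities like $\sqrt[3]{\lambda^2}$ and $\sqrt[6]{\lambda}$; since $\theta$ is irrational these are transcendental in $\lambda=e^{2\pi i\theta}$, so no dependence can be rescaled to have rational coefficients. Hence the $v_i$ are $\mathbb Q$-linearly independent even though their $\mathbb C$-span is smaller, the classes $[P_i]$ are $\mathbb Q$-independent in $K_0$, and $\operatorname{rank}K_0(\mathcal A_\theta^{alg}\rtimes\G)\geq r$.

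For the upper bound I would first pin down the base ring. Writing $\mathcal A_\theta^{alg}=\mathbb C[U_1^{\pm1}][U_2^{\pm1};\sigma]$ as a skew-Laurent extension of the regular ring $\mathbb C[U_1^{\pm1}]$ by $\sigma(U_1)=\lambda U_1$, the twisted Bass--Heller--Swan theorem --- using that $\mathbb C[U_1^{\pm1}]$ is regular (so its negative $K$-groups vanish) and that $\sigma_*$ is the identity on $K_0=\mathbb Z$ --- gives $K_0(\mathcal A_\theta^{alg})\cong\mathbb Z$, generated by $[1]$. This is the structural heart of the conjecture: the algebraic quantum torus has $K_0$ of rank $1$, one less than the topological rank $2$, precisely because the Powers--Rieffel projection is not a Laurent polynomial. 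The expected rank drop in each crossed product is the image of exactly this phenomenon in the $g=1$ sector, which is why the answer is $\dim_{\mathbb C}HP^{even}-1$ rather than $\dim_{\mathbb C}HP^{even}$.

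To compute the crossed product I would exploit $\mathcal A_\theta^{alg}\rtimes\G\cong\mathbb C_\lambda[\mathbb Z^2\rtimes\G]$, the twisted group algebra of the crystallographic group $\mathbb Z^2\rtimes\G$, and attempt an equivariant computation of its algebraic $K_0$ organised by the same conjugacy-class (paracyclic) decomposition that underlies Theorems~\ref{thm:hoch} and~\ref{thm:cyclic}. Concretely, one wants to show that the comparison map into the topological group $K_0(A_\theta\rtimes\G)\cong\mathbb Z^{r+1}$ of Echterhoff--L\"uck--Phillips--Walters \cite{ELPH} is injective with image a free subgroup of corank exactly one, the missing summand being the class of the single non-algebraic Rieffel projection --- the one living only in the smooth completion and detected by the cocycle $S\varphi$, which pairs to zero with every algebraic projection. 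Injectivity is supplied by the lower-bound pairing argument above; the corank-one and freeness statements would follow by realising every remaining ELPH generator as an integral combination of the algebraic idempotents $p_i^\theta,q_i^\theta,r_i^\theta$.

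The main obstacle is the upper bound, and precisely the passage from the base ring to the crossed product. The Chern--Connes pairing only ever yields lower bounds on rank, so it cannot close the argument; and unlike the $C^\ast$ setting there is no Pimsner--Voiculescu or Baum--Connes machine for a \emph{finite}-group crossed product, while the algebraic $K_0$ of a noncommutative ring can in principle be far larger than its topological shadow. The delicate work is therefore to prove that the smooth and algebraic categories of finitely generated projective modules over $\mathcal A_\theta^{alg}\rtimes\G$ differ by exactly one generator --- that no further algebraic idempotents and no torsion appear beyond what the ELPH lattice records minus the Rieffel class. Establishing this comparison rigorously, for instance through a controlled algebraic-to-topological assembly statement exploiting the invertibility of $|\G|$ in $\mathbb C$, is what currently keeps the statement at the level of a conjecture.
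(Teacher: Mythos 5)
The statement you are addressing is labelled a \emph{conjecture} in the paper: the paper offers no proof of it, so there is nothing to compare your argument against, and your proposal is itself explicit that it is a strategy rather than a proof. The decisive gap is the one you name yourself: the upper bound. The Chern--Connes pairing can only certify $\mathbb Z$-linear independence of classes you already hold, so it bounds the rank of $K_0(\mathcal A_\theta^{alg}\rtimes\G)$ from below; nothing in the proposal shows that the listed idempotents generate, that the group is torsion-free, or that the comparison map into the topological $K_0$ of \cite{ELPH} is injective with corank exactly one. The Bass--Heller--Swan computation of $K_0(\mathcal A_\theta^{alg})\cong\mathbb Z$ is a reasonable anchor for the $g=1$ sector, but no mechanism is offered for passing from the base algebra to the crossed product by the finite group. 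Until that half is supplied, the statement remains exactly where the paper leaves it.

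Two further points on the half you do argue. First, your justification of the $\mathbb Q$-linear independence of the rows of the index tables is not right as stated: the $\mathbb C$-linear dependences that actually occur (for $\G=\mathbb Z_3$, for instance, $\tfrac{\zeta^2-1}{3}\,v_{[1]}+v_{[q_0^\theta]}-\zeta^2\,v_{[q_1^\theta]}=0$) have coefficients that are roots of unity, hence algebraic; the transcendental factors $\sqrt[3]{\lambda^2}$ and $\sqrt[6]{\lambda}$ cancel out of the relevant minors, so ``every dependence visibly involves transcendental quantities'' is not the reason no rational rescaling exists. One must check directly, case by case, that the dependence space meets $\mathbb Q^{r+1}$ trivially. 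Second, that check actually \emph{fails} for the $\mathbb Z_4$ table as printed: the rows satisfy $v_{[1]}=2v_{[p_0^\theta]}+2v_{[p_2^\theta]}$, since the entries in the columns $S\mathcal D_{0,0}$ and $S\mathcal F^{\pm i}_{0,0}$ cancel in the sum $v_{[p_0^\theta]}+v_{[p_2^\theta]}$; this caps the rank detected by the pairing at $7$ rather than the required $8$. The itemized computation in the proof of Theorem \ref{thm:table}(b) gives $\langle[p_2^\theta],[S\mathcal D_{0,0}]\rangle=\tfrac14$, contradicting the $-\tfrac14$ in the displayed table; if that sign is the correct one, independence is restored, but your argument must resolve this discrepancy and replace the transcendence heuristic with an explicit verification before even the lower bound can be considered established.
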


\vspace{2mm}

{\small \noindent{Safdar Quddus},\\ School of Mathematical Sciences,\\ National Institute of Science Education and Research, Bhubaneswar, India.\\
Email: safdar@niser.ac.in.


\begin{thebibliography}{}

\bibitem[AL]{AL} J. Alev and T. Lambre: Homologie des invariants d'une alg\`ebre de Weyl, \emph{K-Theory, 18 (1999), 401--411}.

\bibitem[B]{B} J. Baudry: Invariants du tore quantique, \emph{Bull. Sci. Math., 134 (2010), 531--547}.

\bibitem[BRT]{BRT} Y. Berest, A. Ramadoss and X. Tang: The Picard group of a noncommutative algebraic torus,  \emph{J. Noncommut. Geom., 7 (2013), 335--356.}.
\bibitem[BW]{BW}  J. Buck and S. Walters: Non commutative spheres associated with the hexic transform and their K-theory, \emph{J. Operator Theory
58:2(2007), 441 -- 462}.

\bibitem[C]{C} A. Connes: Noncommutative differential geometry, \emph{IHES Publ.
Math., 62 (1985), 257--360}.

\bibitem[ELPH]{ELPH} S. Echterhoff, W. L\"uck, N. Phillips and S. Walters: The structure of
crossed products of irrational rotation algebras by finite subgroups of $
SL_2(\mathbb{Z})$, \emph{J. Reine Angew. Math., 639 (2010), 173--221}.

\bibitem[EO]{EO} P. Etingof and A. Oblomkov: Quantization, orbifold cohomology, and Cherednik algebras, Jack, Hall-Littlewood and Macdonald polynomials, 
\emph{Contemp. Math., 417, Amer. Math. Soc., Providence, RI, (2006),  171--182.} 

\bibitem[F]{F} S. Fryer: The q-Division Ring and its Fixed Rings, \emph{Journal of Algebra, Volume 402, pp. 358-378 (2013)}.

\bibitem[GJ]{GJ} E. Getzler and J.D.S. Jones: The cyclic homology of crossed product
algebras, \emph{J. Reine Angew. Math., 445 (1993), 161--174}.

\bibitem[HT]{HT} G. Halbout and X. Tang: Noncommutative Poisson structures on orbifolds, \emph{Trans. Amer. Math. Soc., 362 (2010), 2249--2277}.

\bibitem[L]{L} Loday, J: : Cyclic homology(ISBN 3540630740) \emph{Springer, Second Edition}.

\bibitem[NPPT]{NPPT} N. Neumaier, M.J. Pflaum, H.B. Posthuma and X. Tang: Homology of formal deformations of proper \'etale Lie groupoids,
\emph{J. Reine Angew. Math., 593 (2006), 117--168}.


\bibitem[O]{O} A. Oblomkov: Double affine Hecke algebras of rank 1 and affine cubic surfaces, \emph{Int. Math. Res. Not., no. 18 (2004), 877--912.} 

\bibitem[PV]{PV} M. Pimsner and D. Voiculescu: Imbedding the irrational rotation
C*-algebra into an AF-algebra, \emph{J. Operator Theory, 4 (1980), 201--210}.

\bibitem[Q1]{Q1} S. Quddus: Hochschild and cyclic homology of the crossed product of algebraic irrational rotational algebra by finite subgraoups 
of $SL(2,\mathbb Z)$, \emph{J. Algebra 447 (2016), 322--366}.

\bibitem[Q2]{Q2} S. Quddus: Cohomology of $\mathcal A_\theta^{alg} \rtimes \mathbb Z_2$ and its Chern-Connes Pairing, \emph{Journal of Noncommutative Geometry, accepted}

\bibitem[Y]{Y} A. Yashinski: The Gauss-Manin connection for the cyclic homology of smooth deformations, and noncommutative tori, \emph{Journal of Noncommutative Geometry, to appear}.

\end{thebibliography}
\end{document}